\def\@tocline#1#2#3#4#5#6#7{\relax
  \ifnum #1>\c@tocdepth 
  \else
    \par \addpenalty\@secpenalty\addvspace{#2}%
    \begingroup \hyphenpenalty\@M
    \@ifempty{#4}{%
      \@tempdima\csname r@tocindent\number#1\endcsname\relax
    }{%
      \@tempdima#4\relax
    }%
    \parindent\z@ \leftskip#3\relax \advance\leftskip\@tempdima\relax
    \rightskip\@pnumwidth plus4em \parfillskip-\@pnumwidth
    #5\leavevmode\hskip-\@tempdima
      \ifcase #1
       \or\or \hskip 1em \or \hskip 2em \else \hskip 3em \fi%
      #6\nobreak\relax
    \dotfill\hbox to\@pnumwidth{\@tocpagenum{#7}}\par
    \nobreak
    \endgroup
  \fi}
\DeclareMathAlphabet{\mathpzc}{OT1}{pzc}{m}{it}
\newcommand{\hh}{\mathbb{H}}
\newtheorem{theorem}{Theorem}[section]
\newtheorem*{theorem*}{Theorem}
\newtheorem{proposition}[theorem]{Proposition}
\newtheorem{corollary}[theorem]{Corollary}
\newtheorem{lemma}[theorem]{Lemma}
\theoremstyle{definition}
\newtheorem{definition}[theorem]{Definition}
\newtheorem{example}[theorem]{Example}
\newtheorem{remark}[theorem]{Remark}
\title{Log-biharmonicity and a Jensen formula in the space of quaternions} 
\author[Altavilla]{Amedeo Altavilla}
\address{Altavilla Amedeo: Dipartimento Di Matematica, Universit\`a di Roma Tor Vergata, Via Della Ricerca Scientifica 1, 00133, Roma, Italy}
\email{altavilla@mat.uniroma2.it}
\author[Bisi]{Cinzia Bisi}
\address{Bisi Cinzia: Dipartimento di Matematica e Informatica, Universit\`a di Ferrara, via Machiavelli 35, I-44121 Ferrara, Italy}
\email{bsicnz@unife.it}
\begin{document}


\begin{abstract}
Given a complex meromorphic function, it is well defined its Riesz measure
in terms of the laplacian of the logarithm of its modulus. 
Moreover, related
to this tool, it is possible to prove the celebrated Jensen formula.
In the present paper, using among the other things the fundamental solution for the bilaplacian,
we introduce a possible generalization of these two concepts in the space of quaternions, obtaining new interesting Riesz measures and global (i.e. four dimensional), Jensen formulas. 
\end{abstract}
\maketitle

\tableofcontents

\section*{Introduction}
In classical complex analysis, \textit{harmonic functions} are defined to be the solutions
of the \textit{Laplace equation} $\Delta f=0$ and, as it is well known, they are characterized by satisfying the mean value property: 
$f:\Lambda\subset\mathbb{C}\rightarrow\mathbb{C}$ is harmonic if and only if for any $z_{0}\in\Lambda$ such that the disc $D(z_{0},\rho)$ centered in $z_{0}$ with radius $\rho$ is contained in $\Lambda$, it holds:
\begin{equation*}
f(z_0)=\frac{1}{2 \pi} \int_{0}^{2 \pi} f(z_0 + \rho e^{i \theta}) d \theta.
\end{equation*}

Since the logarithm of the modulus of any analytic function $f$ is a harmonic
function outside the zero set of $f$, then it is possible to prove firstly that
$\log|z|$ is a multiple of the \textit{fundamental solution} of the Laplace equation and, moreover, the so celebrated \textit{Jensen formula}.

Entering into the details, it is well known that
if $u$ is a subharmonic function on a domain $D \subset \mathbb{C},$ with $u \not\equiv - \infty,$ then the {\it generalized laplacian} of $u$ is the \textit{Radon measure} $\Delta u$ on $D,$ i.e. the laplacian in the sense of distributions. 
The potential $p_{\mu}$ associated to a measure $\mu$  can be seen as the distributional convolution of $\mu$ with the locally integrable function $\log |z|$.
Then, we can state the following theorem which asserts that $\Delta p_\mu$ is the convolution of $\mu$ with a $\delta-$function, i.e. a multiple of $\mu$ itself.\begin{theorem}
Let $\mu$ be a finite Borel measure on $\mathbb{C}$ with compact support. Then:
$$
\Delta p_\mu =\mathfrak{c} \mu,
$$
where $\mathfrak{c}$ is a constant depending on the convention used to compute the laplacian.
\end{theorem}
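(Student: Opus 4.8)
The plan is to reduce the statement to the single fact that $\log|z|$ is, up to the constant $\mathfrak{c}$, a fundamental solution of the Laplacian, and then to transport this identity through the convolution defining $p_\mu$. First I would recall that by definition $p_\mu = \mu * \log|\cdot|$, understood as a distribution; the hypotheses that $\mu$ is finite and compactly supported guarantee that this convolution is well defined (since $\log|\cdot|$ is locally integrable) and that $p_\mu$ is itself a distribution to which the distributional Laplacian may be applied.

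The core computation I would carry out is the identity $\Delta \log|z| = \mathfrak{c}\,\delta_0$ in the sense of distributions. To establish it I would test against an arbitrary $\varphi \in C_c^\infty(\mathbb{C})$, write $\langle \Delta \log|\cdot|, \varphi\rangle = \langle \log|\cdot|, \Delta\varphi\rangle$ by definition, excise a small disc $D(0,\varepsilon)$ around the singularity, and apply Green's second identity on the punctured region. Since $\log|z|$ is genuinely harmonic away from the origin, the area term vanishes and only boundary integrals over $\partial D(0,\varepsilon)$ survive; a direct estimate of these, using that $\varepsilon\log\varepsilon \to 0$ while $\partial_r \log r = 1/r$ cancels the $2\pi\varepsilon$ arc length, isolates the value $\varphi(0)$ times the constant $\mathfrak{c}$. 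With the convention $\Delta = \partial_{xx}+\partial_{yy}$ one obtains $\mathfrak{c}=2\pi$, which is exactly the stated dependence on the convention.

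Once the fundamental-solution identity is in hand, I would invoke the fact that the distributional Laplacian commutes with convolution: for a compactly supported measure $\mu$ and the locally integrable kernel $\log|\cdot|$,
\[
\Delta p_\mu = \Delta\bigl(\mu * \log|\cdot|\bigr) = \mu * \bigl(\Delta \log|\cdot|\bigr) = \mu * (\mathfrak{c}\,\delta_0) = \mathfrak{c}\,\mu,
\]
where the last equality uses that $\delta_0$ is the identity for convolution. This chain yields the claim.

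The step I expect to be the main obstacle is the careful justification of the boundary-term analysis in Green's identity, where one must simultaneously control the $\log|z|$ singularity and the normal-derivative singularity $\partial_r \log|z| = 1/r$ on the shrinking circle; everything else is bookkeeping once these two limits are pinned down. A secondary technical point is rigorously justifying that $\Delta$ passes inside the convolution, which rests on the compact support of $\mu$ together with Fubini's theorem when unwinding the pairing against a test function.
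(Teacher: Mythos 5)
Your argument is correct, but note that the paper itself offers no proof of this statement: it is quoted in the introduction as classical background from potential theory (the potential $p_\mu$ being defined as the distributional convolution of $\mu$ with $\log|z|$), so there is no in-paper proof to match. Your route --- establish $\Delta\log|z|=\mathfrak{c}\,\delta_0$ by testing against $\varphi\in C_c^\infty(\mathbb{C})$, excising a disc $D(0,\varepsilon)$, applying Green's second identity, and checking that the $\varepsilon\log\varepsilon$ term dies while the $\partial_r\log r=1/r$ term against the arc length $2\pi\varepsilon$ produces $2\pi\varphi(0)$; then push $\Delta$ through the convolution using the compact support of $\mu$ --- is the standard one and is sound. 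It is worth observing that when the paper proves the analogous four-dimensional statement (the fundamental solution of the bilaplacian, Theorem \ref{delta}), it uses a genuinely different technique: it regularizes the kernel as $\log(|x|^2+\epsilon^2)$, computes $\Delta^2$ of the regularization explicitly in radial coordinates, and recognizes the result as an approximate identity converging to $-96\,\delta_0$. That regularization approach avoids all boundary-term bookkeeping (which would be considerably messier for a fourth-order operator) at the cost of an explicit radial computation, whereas your Green's-identity excision is the more economical argument for the second-order two-dimensional case and directly exhibits where the constant $\mathfrak{c}=2\pi$ comes from. Both the fundamental-solution step and the commutation $\Delta(\mu*\log|\cdot|)=\mu*\Delta\log|\cdot|$ are correctly justified by your appeal to the compact support of $\mu$ and local integrability of $\log|\cdot|$, so the proposal stands as a complete proof.
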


A particular case of the previous theorem is the following:
\begin{theorem} \label{CxRiesz}
Let $f:\Lambda\subset\mathbb{C}\rightarrow\mathbb{C}$ be a holomorphic function, with $f \not\equiv 0.$ 
Then $\Delta \log |f|$ is composed of $ \mathfrak{c}-$Dirac deltas on the zeros of $f,$ counted with their multiplicities.
\end{theorem}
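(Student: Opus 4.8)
The plan is to reduce the statement to the local behaviour of $\log|f|$ near each zero together with the fact, recalled in the Introduction and established by the preceding theorem, that $\log|z|$ is a multiple of the fundamental solution of the Laplacian. First I would note that since $f$ is holomorphic with $f\not\equiv 0$, its zero set $Z(f)$ is discrete in $\Lambda$, hence locally finite; this is what makes the phrase ``on the zeros of $f$'' meaningful. Because the claim is an equality of distributions, it suffices to identify $\Delta\log|f|$ locally and then patch.

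The first step handles the complement of the zeros. On $\Lambda\setminus Z(f)$ the function $f$ is zero-free, so one may locally choose a holomorphic branch of $\log f$ and write $\log|f|=\operatorname{Re}(\log f)$, which is harmonic. Hence $\Delta\log|f|=0$ on $\Lambda\setminus Z(f)$, so $\operatorname{supp}(\Delta\log|f|)\subseteq Z(f)$ and the distribution can only be a combination of masses carried by the zeros. The second step is the local analysis at a zero: fix $z_{0}\in Z(f)$ of multiplicity $m$ and factor $f(z)=(z-z_{0})^{m}g(z)$ with $g$ holomorphic and $g(z_{0})\neq 0$. Shrinking the neighbourhood so that $g$ is zero-free there, $\log|g|$ is harmonic, and the decomposition $\log|f|=m\log|z-z_{0}|+\log|g|$ gives, as distributions near $z_{0}$, the identity $\Delta\log|f|=m\,\Delta\log|z-z_{0}|$.

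It then remains to invoke the fundamental solution. The preceding theorem, applied to the point mass $\mu=\delta_{z_{0}}$ (whose potential is $p_{\mu}(z)=\log|z-z_{0}|$), yields $\Delta\log|z-z_{0}|=\mathfrak{c}\,\delta_{z_{0}}$, and combining this with the second step gives $\Delta\log|f|=m\,\mathfrak{c}\,\delta_{z_{0}}$ in a neighbourhood of $z_{0}$. Summing over the locally finite family $Z(f)$ produces the global identity $\Delta\log|f|=\mathfrak{c}\sum_{z_{0}\in Z(f)}m_{z_{0}}\,\delta_{z_{0}}$, which is exactly the asserted collection of $\mathfrak{c}$-Dirac deltas on the zeros of $f$, counted with multiplicity.

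I expect the only genuinely delicate point to be the distributional identity $\Delta\log|z|=\mathfrak{c}\,\delta_{0}$, which relies on the local integrability of $\log|z|$ and on a Green's-second-identity computation performed on $\Lambda$ with a small disc excised around the singularity, followed by a careful passage to the limit on the boundary integrals as the radius tends to $0$; the constant $\mathfrak{c}$ emerges precisely from this limit. Since that computation is already subsumed in the preceding theorem for a Dirac mass, everything else above is routine bookkeeping with local harmonicity.
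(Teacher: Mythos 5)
Your proof is correct, and it follows exactly the route the paper intends: the paper states Theorem \ref{CxRiesz} without an explicit proof, presenting it as a particular case of the preceding theorem $\Delta p_\mu=\mathfrak{c}\mu$, and your local factorization $f=(z-z_0)^m g$ together with $\Delta\log|z-z_0|=\mathfrak{c}\,\delta_{z_0}$ is precisely how that specialization is realized. No gaps.
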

With the last result we may say, in some sense, that the theory of potentials pays back its debt to complex analysis. 
In this paper, an analog of this theorem is obtained in the quaternionic setting, via the use of the bilaplacian over $\mathbb{R}^4,$ instead of the laplacian over $\mathbb{R}^2.$ 

Coming back to Jensen formula, in 1899 Johan Jensen investigated how the mean 
value property for the logarithm of the modulus of a holomorphic function becomes in presence of zeros in the interior of $|z| \le \rho$.
If $f:\Lambda\subset\mathbb{C}\rightarrow\mathbb{C}$ is a holomorphic function such that
$D(0,\rho)\subset\Lambda$,
denoting the zeros of $f|_{D(0,\rho)}$ as $a_1, \cdots , a_n$, taking into account their multiplicities and assuming that $z=0$ is not a zero, he proved that
\begin{equation}\label{jeneq}
\log |f(0)| = \frac{1}{2 \pi} \int_{0}^{2\pi} \log |f(\rho e^{i \theta})| d \theta -\sum\limits_{i=1}^{n} \log \left(\frac{\rho}{|a_i|}\right).
\end{equation}
Nowadays this is called {\it Jensen formula} and it relates the average modulus of an analytic function on a circle 
with the moduli of its zeros inside the circle and, for this reason, it is an important statement in the study of entire functions in complex analysis. 
In this paper we lay the groundwork to generalize also this result over the skew field of the quaternions $\mathbb{H}$. 
To reach this and the previous aim, we will use two particular classes of quaternionic functions
of one quaternionic variable. The first will be the class of slice preserving
regular functions, while the second is a new class
of functions which naturally arises in our theory: PQL functions.

Slice preserving regular functions are regular functions in the sense of  \cite{genstostru, ghiloniperotti}, such that, for any quaternionic imaginary unit
$J\in\mathbb{H}$ (i.e. $J^{2}=-1$), they send the complex line $\mathbb{C}_{J}:={\rm Span}_{\mathbb{R}}\{1,J\}\subset\mathbb{H}$ into itself.

A PQL function $f$ is a function of the following type:
\begin{equation*}
f(x)=a_{0}(x-q_{1})^{M_{1}}a_{1}\dots a_{N-1}(x-q_{N})^{M_{N}}a_{N},
\end{equation*}
where, $\{q_{k}\}_{k=1}^{N}$ and $\{a_{k}\}_{k=0}^{N}$ are finite sets of, possibly repeated, quaternions and $M_{k}=\pm1$ for any $k$.
In our knowledge, this class of functions was never studied  whereas fit very well in the topics 
we are going to introduce.

Even if the intersection between these two families is nonempty, 
PQL functions are not in general regular in the sense of \cite{genstostru, ghiloniperotti}. 
Furthermore, thanks to their
particular expression it is possible to fully describe their zeros and singularities.
Both families will be properly defined and discussed in Section 1. 

We give now a simplified version of the two main theorems of this work.
If $\Omega\subset\mathbb{H}$ is a domain and $f:\Omega\rightarrow\hat{\mathbb{H}}=\mathbb{H} \cup \{ \infty \}$ is any quaternionic function of one quaternionic variable, when it makes sense, we will denote by $\mathcal{Z}(f)$ and $\mathcal{P}(f)$ the
sets of its zeros and ``singularities'', respectively and $\mathcal{ZP}(f)=\mathcal{Z}(f)\cup\mathcal{P}(f)$. 

In the whole paper the open ball centered in zero with radius $\rho$ will be denoted
by $\mathbb{B}_{\rho}$. When $\rho=1$, then we will simply write $\mathbb{B}_{1}=\mathbb{B}$.

The first main theorem of this paper is the quaternionic analogue of Theorem \eqref{CxRiesz} where, instead of the Laplace operator, we use the \textit{bilaplacian} $\Delta^{2}=\Delta\circ\Delta$. To obtain it we firstly reconstruct the fundamental solution for the bilaplacian of $\mathbb{R}^{4}$.

\begin{theorem}[Riesz measure]
Let $\Omega\subseteq\mathbb{H}$ be a domain such that $\overline{\mathbb{B}_{\rho}}\subset\Omega$ for some $\rho>0$. Let $f:\Omega\rightarrow\hat{\mathbb{H}}$ be
a slice preserving regular function or a PQL function. Then
\begin{equation*}
-\frac{1}{48}\Delta^{2}\log|f|\big|_{\mathbb{B}_{\rho}}=\delta_{\mathcal{Z}(f_{|\mathbb{B}_{\rho}})}-\delta_{\mathcal{P}(f_{|\mathbb{B}_{\rho}})},
\end{equation*}
where $\delta_{\mathcal{Z}(f_{|\mathbb{B}_{\rho}})}$ and $\delta_{\mathcal{P}(f_{|\mathbb{B}_{\rho}})}$ are the Dirac measure of the set $\mathcal{Z}(f_{|\mathbb{B}_{\rho}})$ and $\mathcal{P}(f_{|\mathbb{B}_{\rho}})$, respectively.
\end{theorem}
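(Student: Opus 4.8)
The plan is to handle the two function classes by a single mechanism — reduce everything to the local behaviour of $\log|f|$ near each point of $\mathcal{ZP}(f)$ — but to treat the algebraically transparent PQL case first, since it already isolates the role of the bilaplacian's fundamental solution. For a PQL function the multiplicativity of the quaternionic modulus, $|pq|=|p||q|$, gives at once
\begin{equation*}
\log|f(x)|=\sum_{k=1}^{N}M_{k}\log|x-q_{k}|+\sum_{k=0}^{N}\log|a_{k}|,
\end{equation*}
so that, constants being annihilated by $\Delta^{2}$,
\begin{equation*}
\Delta^{2}\log|f|=\sum_{k=1}^{N}M_{k}\,\Delta^{2}\log|x-q_{k}|.
\end{equation*}
Each $\log|x-q_{k}|$ is, up to a multiplicative constant, the fundamental solution of $\Delta^{2}$ on $\mathbb{R}^{4}$ reconstructed above; hence $-\tfrac{1}{48}\Delta^{2}\log|x-q_{k}|=\delta_{q_{k}}$, the constant $-1/48$ being supplied by that reconstruction and the sign $M_{k}=\pm1$ distinguishing a zero ($M_k=+1$) from a pole ($M_k=-1$). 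Restricting to $\mathbb{B}_{\rho}$ keeps exactly the $q_{k}\in\mathbb{B}_{\rho}$ and yields the claim for PQL functions.

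The slice preserving case rests on a structural fact — \emph{log-biharmonicity} away from $\mathcal{ZP}(f)$ — which I would prove by exploiting the axial symmetry of $|f|$. Writing $x=x_{0}+I r$ with $r=|\mathrm{Im}\,x|$, a slice preserving regular function has real components $f=\alpha(x_{0},r)+I\beta(x_{0},r)$ with $(\alpha,\beta)$ satisfying the Cauchy--Riemann system on each slice; consequently $\log|f|=\tfrac12\log(\alpha^{2}+\beta^{2})$ depends only on $(x_{0},r)$ and is harmonic in those two variables, $\Delta_{2}\log|f|=0$ with $\Delta_{2}=\partial_{x_{0}}^{2}+\partial_{r}^{2}$, off the common zero set of $\alpha,\beta$. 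On axially symmetric functions the four-dimensional Laplacian reads $\Delta=\Delta_{2}+\tfrac{2}{r}\partial_{r}$. Inserting $\Delta_{2}\log|f|=0$ gives $\Delta\log|f|=\tfrac{2}{r}\partial_{r}\log|f|$, and a direct computation of $\Delta\!\big(\tfrac{2}{r}\partial_{r}\log|f|\big)$, again using $\partial_{x_{0}}^{2}\log|f|=-\partial_{r}^{2}\log|f|$, shows all terms cancel, so $\Delta^{2}\log|f|=0$ on $\mathbb{B}_{\rho}\setminus\mathcal{ZP}(f)$. (For PQL functions the same biharmonicity is already clear from the first paragraph.) Thus in every case $\Delta^{2}\log|f|$ is a distribution supported on $\mathcal{ZP}(f)$.

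It remains to identify this distribution near each component of the singular set. Real zeros and poles of a slice preserving function, and all singularities of a PQL function, are isolated points, where the local model is $M\log|x-q|$ and the coefficient is read off from the fundamental solution exactly as above. The new feature is a \emph{spherical} zero or pole $\mathbb{S}=\{\alpha_{0}+I\beta_{0}:I^{2}=-1\}$ of a slice preserving function, near which $\log|f|\sim M\log\rho$ with $\rho=\sqrt{(x_{0}-\alpha_{0})^{2}+(r-\beta_{0})^{2}}$, a codimension-two logarithmic singularity. To extract the measure I would apply the fourth-order Green identity
\begin{equation*}
\int_{D}\big(u\,\Delta^{2}v-v\,\Delta^{2}u\big)=\int_{\partial D}\big(u\,\partial_{n}\Delta v-\Delta v\,\partial_{n}u+\Delta u\,\partial_{n}v-v\,\partial_{n}\Delta u\big),
\end{equation*}
with $u=\log|f|$ and $v$ a test function on $D=\mathbb{B}_{\rho}\setminus T_{\varepsilon}(\mathbb{S})$, where $T_{\varepsilon}$ is a shrinking tube about $\mathbb{S}$. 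Since $\Delta^{2}u=0$ on $D$, the integral reduces to the boundary term over $\partial T_{\varepsilon}$, to be evaluated as $\varepsilon\to0$ using the transverse $\log\rho$ profile together with the weight $4\pi r^{2}\,dr\,dx_{0}$ carried by the spherical symmetry of the imaginary part; the limit is a measure supported on $\mathbb{S}$.

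The main obstacle is precisely this spherical case: one must verify that the $\tfrac{2}{r}\partial_{r}$ correction and the curvature of $\mathbb{S}$ conspire so that the boundary limit is an honest Dirac (surface) measure on $\mathbb{S}$ — with no surviving derivatives of $\delta_{\mathbb{S}}$ and no absolutely continuous remainder — and that its constant agrees, after the $-1/48$ normalisation, with the one found for point singularities. Granting this, summing the contributions of the finitely many zeros and poles in $\mathbb{B}_{\rho}$ while tracking the signs $M=\pm1$ gives $-\tfrac{1}{48}\Delta^{2}\log|f|\big|_{\mathbb{B}_{\rho}}=\delta_{\mathcal{Z}(f_{|\mathbb{B}_{\rho}})}-\delta_{\mathcal{P}(f_{|\mathbb{B}_{\rho}})}$, as asserted.
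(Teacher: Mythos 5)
Your PQL paragraph matches the paper's argument (multiplicativity of the modulus plus the fundamental solution $\Delta^{2}\bigl(-\tfrac{1}{48}\log|x|\bigr)=\delta_{0}$ and its translates), and your proof of log-biharmonicity for slice preserving functions is correct and is in fact a genuinely different, more elementary route than the paper's: the paper deduces $\Delta^{2}\log|f|^{2}=0$ by showing $\overline{\mathcal{D}}_{CF}\log|f|^{2}=2(\partial_{c}f)f^{c}/|f|^{2}$ is regular off $\mathcal{ZP}(f)$, whereas you use only the axial symmetry $\Delta=\partial_{x_{0}}^{2}+\partial_{r}^{2}+\tfrac{2}{r}\partial_{r}$ together with two-dimensional harmonicity of $\log|f|$ on each slice; the cancellation $\Delta\bigl(\tfrac{2}{r}\partial_{r}u\bigr)=\tfrac{2}{r}\partial_{r}(\Delta_{2}u)=0$ you invoke does check out.

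The genuine gap is exactly where you flag it: the spherical zeros and poles. For a slice preserving semiregular $f$ these are unavoidable (they are the generic non-real zeros of such functions), and the entire quantitative content of the theorem in that case is the identity $\tfrac{1}{\gamma}\Delta^{2}\log|(x-p)^{s}|=\mathcal{L}_{\mathbb{S}_{p}}$ with the \emph{same} constant $\gamma=-48$ and with no derivative-of-$\delta$ or absolutely continuous remainder. You propose a fourth-order Green identity on a shrinking tube $T_{\varepsilon}(\mathbb{S}_{p})$ and then write ``granting this''; but the boundary terms there are genuinely of order $O(1)$ (e.g. $\Delta\log\varrho\sim 2(r-\beta_{0})/(r\varrho^{2})$ near the sphere, so $v\,\partial_{n}\Delta u$ integrated over $\partial T_{\varepsilon}$, of area $\sim 8\pi^{2}\beta_{0}^{2}\varepsilon$, does not vanish), and nothing in your argument determines their limit. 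The paper closes this by a different and shorter mechanism (its Theorem on the characteristic polynomial): using $(x-p)^{s}=(x-\tilde p)\bigl(T_{(x-\tilde p)}(x)-\tilde p^{c}\bigr)$ it shows that the restriction of $\Delta^{2}\log|(x-p)^{s}|$ to each semislice $\mathbb{C}_{I}^{+}$ is $\gamma\delta_{\tilde p}$ with $\{\tilde p\}=\mathbb{S}_{p}\cap\mathbb{C}_{I}^{+}$, and then integrates over $I\in\mathbb{S}$ via the splitting $\mathbb{H}\setminus\mathbb{R}\simeq\mathbb{C}_{I}^{+}\times\mathbb{S}$ and Fubini to identify the distribution as the surface measure of $\mathbb{S}_{p}$. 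Until you either carry out the tube computation or substitute an argument of this kind, the slice preserving half of the statement is not proved. (A minor additional point: to reduce to local models near real and spherical points you should explicitly invoke the finite factorization $f_{|\mathbb{B}_{\rho}}=x^{n}\prod(x-r_{h})^{n_{h}}\prod((x-q_{k})^{s})^{n_{k}}g$ with $g$ zero- and pole-free, which the paper records and which rests on its Corollaries on compactness of $\mathcal{Z}(f)$ and $\mathcal{P}(f)$.)
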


The reader can find all the features of the quaternionic Riesz measure in Section 2 of this paper.
In particular, in Remark~\ref{mix1}, we discuss the case of a product between a slice preserving regular function and a PQL function.

The second main theorem is a quaternionic analogue of the complex Jensen formula \eqref{jeneq}:
\begin{theorem}[Jensen formula]
Let $\Omega\subseteq\mathbb{H}$ be a domain such that $\overline{\mathbb{B}_{\rho}}\subset\Omega$ for some $\rho>0$. Let $f:\Omega\rightarrow\hat{\mathbb{H}}$ be
a slice preserving regular function or a PQL function
such that $f(0)\neq 0,\infty$. 
Then,
\begin{equation*}
\log|f(0)| = \displaystyle\frac{1}{|\partial \mathbb{B}_{\rho}|}\displaystyle\int_{\partial \mathbb{B}_{\rho}}\log|f(y)|d\sigma(y)-\displaystyle\frac{\rho^{2}}{8}\Delta\log|f(x)|_{|x=0}+\Lambda_{\rho}(\mathcal{ZP}(f)),
\end{equation*}
where $\Lambda_{\rho}(\mathcal{ZP}(f))$ is a quaternion depending on zeros and
singularities of $f_{|\mathbb{B}_{\rho}}$. 
\end{theorem}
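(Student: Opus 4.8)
The plan is to reduce the statement to pure potential theory in $\mathbb{R}^{4}$ by feeding the first main theorem into an inhomogeneous mean value identity. Set $u=\log|f|$; since $f(0)\neq 0,\infty$ the function $u$ is smooth in a neighbourhood of the origin, while by the Riesz measure theorem it satisfies, as a distribution on $\mathbb{B}_{\rho}$,
\begin{equation*}
\Delta^{2}u=-48\left(\delta_{\mathcal{Z}(f_{|\mathbb{B}_{\rho}})}-\delta_{\mathcal{P}(f_{|\mathbb{B}_{\rho}})}\right).
\end{equation*}
Thus $u$ is biharmonic away from the finite set $\mathcal{ZP}(f)\cap\mathbb{B}_{\rho}$, and the entire content of the formula is an \emph{inhomogeneous biharmonic mean value identity} on the ball, into which the above measure is inserted.

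First I would establish, for any $u$ with $\Delta^{2}u=g$ on $\mathbb{B}_{\rho}$, the identity
\begin{equation*}
\frac{1}{|\partial\mathbb{B}_{\rho}|}\int_{\partial\mathbb{B}_{\rho}}u\,d\sigma=u(0)+\frac{\rho^{2}}{8}\Delta u(0)+\int_{\mathbb{B}_{\rho}}G_{\rho}(y)\,g(y)\,dy,
\end{equation*}
where $G_{\rho}$ is an explicit radial kernel. I would derive this by introducing the spherical means $M(r)=|\partial\mathbb{B}_{r}|^{-1}\int_{\partial\mathbb{B}_{r}}u\,d\sigma$ and $N(r)=|\partial\mathbb{B}_{r}|^{-1}\int_{\partial\mathbb{B}_{r}}\Delta u\,d\sigma$, using the divergence-theorem relations $M'(r)=|\partial\mathbb{B}_{r}|^{-1}\int_{\mathbb{B}_{r}}\Delta u\,dx$ and $N'(r)=|\partial\mathbb{B}_{r}|^{-1}\int_{\mathbb{B}_{r}}\Delta^{2}u\,dx$, and integrating this coupled system in $r$ from $0$ to $\rho$ (recall $|\partial\mathbb{B}_{r}|=2\pi^{2}r^{3}$ in $\mathbb{R}^{4}$). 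When $g\equiv 0$ the iterated integration collapses and leaves exactly $M(\rho)=u(0)+\tfrac{\rho^{2}}{8}\Delta u(0)$, which is the biharmonic (Pizzetti-type) mean value property; it is precisely the dimension $n=4$ that produces the coefficient $\tfrac{1}{8}$ appearing in the statement, with the sign of the correction matching $-\tfrac{\rho^{2}}{8}\Delta u(0)$ once the identity is solved for $u(0)$.

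Inserting $g=-48(\delta_{\mathcal{Z}}-\delta_{\mathcal{P}})$ and using that the zeros and poles lie in the open ball, none of them being the origin, the volume integral degenerates to a finite sum of point evaluations $G_{\rho}(a)$, each depending only on $|a|$ and $\rho$. Carrying out the iterated integral for a single unit mass at distance $\alpha=|a|$ from the origin yields, up to the normalisation, a kernel of the form
\begin{equation*}
G_{\rho}(a)=\frac{1}{4\pi^{2}}\left(\frac{\rho^{2}}{8\alpha^{2}}-\frac{\alpha^{2}}{8\rho^{2}}-\frac{1}{2}\log\frac{\rho}{\alpha}\right),
\end{equation*}
so that rearranging the identity gives the claim with
\begin{equation*}
\Lambda_{\rho}(\mathcal{ZP}(f))=48\sum_{a\in\mathcal{Z}(f_{|\mathbb{B}_{\rho}})}G_{\rho}(a)-48\sum_{b\in\mathcal{P}(f_{|\mathbb{B}_{\rho}})}G_{\rho}(b).
\end{equation*}
I would then cross-check against the complex Jensen formula \eqref{jeneq}: the logarithmic part of each zero, proportional to $-\log(\rho/|a|)$, is the four dimensional analogue of the classical $-\log(\rho/|a_i|)$ term and carries the same sign (as $\rho/|a|>1$), while the extra rational terms in $\rho/\alpha$ are a genuinely biharmonic feature absent from the planar case.

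The main obstacle is the rigorous passage from the smooth derivation to the measure-valued source: the identity above is proved for smooth $u$, whereas here $\Delta^{2}u$ is a sum of Dirac masses and $u$ itself blows up at the poles. I would justify the limit either by mollifying the $\delta$'s and passing to the limit using the continuity and finiteness of $G_{\rho}$ at each point of $\mathcal{ZP}(f)$, or by excising small balls around the singular points, applying the biharmonic Green identity on the perforated domain, and controlling the boundary flux as the radii shrink. This last step forces one to track the exact local behaviour of $\log|f|$ at a zero or pole of prescribed multiplicity, which is exactly the information provided by the explicit description of the zeros and singularities of slice preserving and PQL functions in Section~1; higher multiplicities $M_{k}$ and coincident points then enter as integer weights in the two sums defining $\Lambda_{\rho}(\mathcal{ZP}(f))$.
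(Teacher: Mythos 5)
Your strategy is genuinely different from the paper's. The paper proves Theorem \ref{jensen} without ever invoking the Riesz measure: it divides $f$ by $\rho$-Blaschke factors (punctual ones at the point zeros and poles, symmetrized ones at the spherical ones), applies the homogeneous biharmonic mean value property to the resulting zero- and pole-free function $g$, and then uses that the Blaschke factors have unit modulus on $\partial\mathbb{B}_{\rho}$ (Theorem \ref{borderbla}) together with the explicit values of $|g(0)|$ and $\Delta\log|g(x)|_{|x=0}$ (Lemma \ref{laplmoebius}). Your route through an inhomogeneous Pizzetti identity is attractive, and your kernel is in fact exactly right: carrying out the iterated integration for a unit mass at distance $\alpha$ gives $G_{\rho}=\frac{1}{4\pi^{2}}\left(\frac{\rho^{2}}{8\alpha^{2}}-\frac{\alpha^{2}}{8\rho^{2}}-\frac{1}{2}\log\frac{\rho}{\alpha}\right)$, and $8\pi^{2}G_{\rho}(a)$ coincides with the paper's contribution $-\left(\log\frac{\rho}{|a|}+\frac{1}{4}\frac{|a|^{4}-\rho^{4}}{\rho^{2}|a|^{2}}\right)$ of a point zero. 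Note, however, that this forces the weight of a simple point zero in $\Delta^{2}\log|f|$ to be $-8\pi^{2}$, not $-48$: the mollifier $(1+|x|^{2})^{-4}$ used in the proof of Theorem \ref{delta} has total integral $\pi^{2}/6$ rather than $1$, so the rescaled family converges to $\frac{\pi^{2}}{6}\delta_{0}$. With the constant $48$ your $\Lambda_{\rho}$ comes out $\frac{6}{\pi^{2}}$ times the correct value even for PQL functions. This normalization issue is inherited, but your derivation depends on the constant while the paper's does not, so you cannot leave it at ``up to the normalisation.''

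The genuine gap concerns spherical zeros and poles, which are the generic non-real zeros of a slice preserving regular function. Your step ``the volume integral degenerates to a finite sum of point evaluations $G_{\rho}(a)$, each depending only on $|a|$ and $\rho$'' fails there: the Riesz distribution of such a zero is supported on a whole $2$-sphere $\mathbb{S}_{a}$, and since $G_{\rho}$ is radial and every point of $\mathbb{S}_{a}$ lies at distance $|a|$ from the origin, pairing $G_{\rho}$ with any \emph{measure} on $\mathbb{S}_{a}$ produces a quantity depending on $|a|$ alone. But the correct contribution of a spherical zero, as in Theorem \ref{jensen}(i) and as one can verify by hand (compare the spherical mean of $\log|y^{2}+1|$ over $\partial\mathbb{B}_{\rho}$ with that of $\log|(y-a)^{s}|$ for $a=\frac{1}{\sqrt{2}}(1+I)$: both zero spheres have modulus $1$, yet the Jensen corrections differ), involves the factor $2|a|^{2}-(a+a^{c})^{2}$ and therefore depends on the real part and the imaginary modulus of $a$ separately. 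The resolution is that $\Delta^{2}\log|(x-a)^{s}|$ is a distribution supported on $\mathbb{S}_{a}$ of positive transversal order: $\log|f|$ has a codimension-two logarithmic singularity along the sphere, and $\Delta^{2}$ applied to it produces normal derivatives of the surface delta, whose pairing with $G_{\rho}$ brings in $\partial G_{\rho}$ in the directions $1$ and $I$ and thus distinguishes points of $\mathbb{S}_{a}$ with the same modulus. So the identity ``$\Delta^{2}\log|f|=\gamma(\delta_{\mathcal{Z}}-\delta_{\mathcal{P}})$'' cannot be fed into your mean value formula as a sum of uniform measures; the excision argument you defer to as the ``main obstacle'' is precisely where the missing transversal terms would surface, and completing it amounts to redoing the computations the paper packages into Lemma \ref{laplmoebius} and the Blaschke decomposition. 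For PQL functions, and for slice preserving functions whose zeros and poles are all real, your argument (with the corrected constant) is complete.
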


The details about Jensen formula and its corollaries are illustrated in Section 3. In particular, in Remark~\ref{mix2}, we discuss the case of a product between a slice preserving regular function and a couple of PQL functions.

To state previous theorems and, as a tool for describing what is mentioned in this introduction, in Section 1
we will state the main definitions and results about slice regular functions;
for what concerns this part, we point out that some observations on the structure of 
$\mathcal{ZP}(f)$ for a (semi)regular function are original, even if they were predicted 
by experts in this field (see Corollary \ref{zerocpt}, Lemma \ref{ordpoleslpr} and Corollary \ref{polecpt}). In the same section we will properly introduce PQL functions and the class of $\rho$-\textit{Blaschke} factors,
that are analogues of what was already introduced in \cite{irenebla}. Some properties of this class of functions will be stated. This part is original even if in part inspired by previous works \cite{irenebla, shur}.

Finally,
in the very last subsection, we will list a number of corollaries that follow from
our Jensen formula. 
The first two of them, Corollaries \ref{genjens1} and \ref{genjens2}, 
deal with possible generalizations of the formula, namely, when some zeros or singularities at the boundary of the ball (where
the integral of the formula is computed), occur and, the second one, 
when the function vanishes or it's singular at
the origin. 
After that, in Corollaries \ref{corN} and \ref{corNN}, we give upper bounds on the number of zeros of a
slice regular function under some additional hypotheses.
The following Corollaries \ref{measurePQL} and \ref{measure} give formulas for the computation of some integrals over 
$\mathbb{H}$.\\
The results contained in this paper will be further developed in the understanding
of harmonic analysis on quaternionic manifolds (see \cite{angellabisi}, \cite{bisigentilitori} and \cite{BW}).

\section{Prerequisites about quaternionic functions}

In this section we will overview and collect the main notions and results needed
for our aims. 
First of all, let us denote by $\mathbb	{H}$ the real algebra
of quaternions. An element $x\in\mathbb{H}$ is usually written as 
$x=x_{0}+ix_{1}+jx_{2}+kx_{3}$, where $i^{2}=j^{2}=k^{2}=-1$ and $ijk=-1$.
Given a quaternion $x$ we introduce a conjugation in $\mathbb{H}$ (the usual one),
as $x^{c}=x_{0}-ix_{1}-jx_{2}-kx_{3}$; with this conjugation we define the real
part of $x$ as $Re(x):=(x+x^{c})/2$ and the imaginary part as $Im(x):=(x-x^{c})/2$.
With the just defined conjugation we can write the euclidean square norm of a
quaternion $x$ as $|x|^{2}=xx^{c}$. 
The subalgebra of real numbers will be identified, of course, with the set
$\mathbb{R}:=\{x\in\mathbb{H}\,\mid\,Im(x)=0\}$

Now, if $x$ is such that $Re (x)=0$, then the imaginary part of $x$ is such that 
$(Im(x)/|Im(x)|)^{2}=-1$. More precisely, any imaginary quaternion $I=ix_{1}+jx_{2}+kx_{3}$, such 
that $x_{1}^{2}+x_{2}^{2}+x_{3}^{2}=1$ is an imaginary unit. The set of imaginary units 
is then a $2-$sphere and will be conveniently denoted as follows:
\begin{equation*}
\mathbb{S}:=\{x\in\mathbb{H}\,\mid\, x^{2}=-1\}=\{x\in\mathbb{H}\,\mid\, Re(x)=0, \, |x|=1\}.
\end{equation*}

With the previous notation, any $x\in\mathbb{H}$ can be written as $x=\alpha+I\beta$,
where $\alpha,\beta\in\mathbb{R}$ and $I\in\mathbb{S}$. 
Given any $I\in\mathbb{S}$ we will denote the real subspace of $\mathbb{H}$ 
generated by $1$ and $I$ as:
\begin{equation*}
\mathbb{C}_{I}:=\{x\in\mathbb{H}\,\mid\,x=\alpha+I\beta, \alpha,\beta\in\mathbb{R}\}.
\end{equation*}
Sets of the previous kind will be called \textit{slices}.
All these notations reveal now clearly the \textit{slice} structure of $\mathbb{H}$ as
union of complex lines $\mathbb{C}_{I}$ for $I$ which varies in $\mathbb{S}$, i.e.
\begin{equation*}
\mathbb{H}=\bigcup_{I\in\mathbb{S}}\mathbb{C}_{I},\quad\bigcap_{I\in\mathbb{S}}\mathbb{C}_{I}=\mathbb{R}
\end{equation*}
The following notation will also be useful for some purpose: 
\begin{equation*}
\mathbb{C}_{I}^{+}:=\{x\in\mathbb{H}\,\mid\,x=\alpha+I\beta, \alpha\in\mathbb{R},\beta>0\},\quad I\in\mathbb{S},
\end{equation*}
and sets of this kind will be called \textit{semislices}.
Observe that for any $I\neq J\in\mathbb{S}$, $\mathbb{C}_{I}^{+}\cap\mathbb{C}_{J}^{+}=\emptyset$ and $\mathbb{H}\setminus\mathbb{R}=\cup_{I\in\mathbb{S}}\,\mathbb{C}_{I}^{+}$, we have then the following diffeomorphism $\mathbb{H}\setminus\mathbb{R}\simeq\mathbb{C}^{+}\times\mathbb{S}$.

We denote the $2-$sphere with center
$\alpha\in\mathbb{R}$ and radius $|\beta|$ (passing through $\alpha+I\beta\in\mathbb{H}$), as:
\begin{equation*}
\mathbb{S}_{\alpha+I\beta}:=\{x\in\mathbb{H}\,\mid\,x=\alpha+J\beta, J\in\mathbb{S}\}.
\end{equation*}
Obviously, if $\beta=0$, then $\mathbb{S}_{\alpha}=\{\alpha\}$.


\subsection{Slice functions and regularity}
In this part we will recall the main definitions and features of slice functions. 
The theory of slice functions was introduced in \cite{ghiloniperotti} as a tool to generalize
the one of quaternionic regular functions defined on particular domains introduced in \cite{gentilistruppa, gentilistruppa1}, to more general domains and to all the 
alternative $*-$algebras. 
Even if this more abstract approach seems to be meaningless, it has been proved to be 
very effective in a lot of situations. So, take a deep breath and accept our position
for no more than some pages.

\vspace{0.5cm}
\noindent
The complexification of $\mathbb{H}$ is defined to be the real tensor product 
between $\mathbb{H}$ itself and $\mathbb{C}$: 
\begin{equation*}
\mathbb{H}_{\mathbb{C}}:=\mathbb{H}\otimes_{\mathbb{R}}\mathbb{C}:=\{p+\imath q\,\mid\, p,q\in\mathbb{H}\}.
\end{equation*}
In $\mathbb{H}_{\mathbb{C}}$ the following associative product is defined: if $p_{1}+\imath q_{1}, p_{2}+\imath q_{2}$ belong to $\mathbb{H}_{\mathbb{C}}$, 
then,
\begin{equation*}
(p_{1}+\imath q_{1})(p_{2}+\imath q_{2})=p_{1}p_{2}-q_{1}q_{2}+\imath(p_{1}q_{2}+q_{1}p_{2}).
\end{equation*}
The usual complex conjugation $\overline{p+\imath q}=p-\imath q$ commutes with
the following involution $(p+\imath q)^{c}=p^{c}+\imath q^{c}$.

We introduce now the class of subsets of $\mathbb{H}$ where our function will be defined.
\begin{definition}
Given any set $D\subseteq\mathbb{C}$, we define its \textit{circularization} as the 
subset in $\mathbb{H}$ defined as follows:
\begin{equation*}
\Omega_{D}:=\{\alpha+I\beta\,\mid\,\alpha+i\beta\in D, I\in\mathbb{S}\}.
\end{equation*}
Such subsets of $\mathbb{H}$ are called \textit{circular} sets. 
If $D\subset \mathbb{C}$ is such that $D\cap\mathbb{R}\neq\emptyset$, then $\Omega_{D}$ is also called a \textit{slice domain} (see \cite{genstostru}). 
\end{definition}

It is clear that, whatever shape the set $D$ has, its circularization $\Omega_{D}$ is
symmetric with respect to the real axis, meaning that, for any 
$x\in\Omega_{D}$ we have that $x^{c}\in\Omega_{D}$.
So, it is not restrictive to start with a set $D$ symmetric with respect to the real line in $\mathbb{C}$.
In particular, if $\alpha+i\beta\in\mathbb{C}$, then $\Omega_{\{\alpha+i\beta\}}=\mathbb{S}_{\alpha+I\beta}$, for any $I\in\mathbb{S}$.


From now on, $\Omega_{D}\subset\mathbb{H}$ will always denote a circular  domain. 

We can state now the
following definition.

\begin{definition}
Let $D\subset\mathbb{C}$ be any symmetric set with respect to the real line. A function $F=F_{1}+\imath F_{2}:D\rightarrow \mathbb{H}_{\mathbb{C}}$ such that $F(\overline z)=\overline{F(z)}$ is said to be a \textit{stem function}. 

\noindent
A function $f:\Omega_{D}\rightarrow\mathbb{H}$ is said to be a \textit{(left) slice
function} if it is induced by a stem function $F=F_{1}+\imath F_{2}$ defined on $D$ 
in the following way: for any $\alpha+I\beta\in\Omega_{D}$,
\begin{equation*}
f(\alpha+I\beta)=F_{1}(\alpha+i\beta)+I F_{2}(\alpha+i\beta).
\end{equation*}

\noindent
If a stem function $F$ induces the slice function $f$, we will write $f=\mathcal{I}(F)$. 
The set of slice functions defined on a certain circular domain $\Omega_{D}$ will
be denoted by $\mathcal{S}(\Omega_{D})$.
Moreover we denote by $\mathcal{S}^{k}(\Omega_{D})$ the set of slice function
of class $\mathcal{C}^{k}$, with $k\in\mathbb{N}\cup\{\infty\}$.
\end{definition}

Notice that $F=F_{1}+\imath F_{2}$ is a stem function if and only if for any $\alpha+i\beta \in D$,
$F_{1}(\alpha-i\beta)=F_{1}(\alpha+i\beta)$ and 
$F_{2}(\alpha-i\beta)=-F_{2}(\alpha+i\beta)$. Then any slice function $f=\mathcal{I}(F_{1}+\imath F_{2})$ is well defined on its domain 
$\Omega_{D}$. If in fact $\alpha+I\beta=\alpha+(-I)(-\beta)\in\Omega_{D}$, then the \textit{even-odd} character of the couple $(F_{1},F_{2})$ grants that 
$f(\alpha+I\beta)=f(\alpha+(-I)(-\beta))$.
 
Given a circular set $\Omega_{D}$ the set $\mathcal{S}^{k}(\Omega_{D})$ is a real vector 
space and also a right $\mathbb{H}$-module for any $k\in\mathbb{N}\cup\{\infty\}$, hence for any $f,g\in\mathcal{S}^{k}(\Omega_{D})$ and for any $q\in\mathbb{H}$, the function 
$f+gq\in\mathcal{S}^k(\Omega_{D})$.

Examples of (left) slice functions are polynomials and power series in the variable $x\in\mathbb{H}$ with all coefficients on the right, i.e.
\begin{equation*}
\sum_{k}x^{k}a_{k},\quad \{a_{k}\}\subset\mathbb{H}.
\end{equation*}

The particular expression of a slice function can be colloquially stated of as a 
quaternionic function of a quaternionic variable that is $\mathbb{H}-$left affine
with respect to the imaginary unit. 
Therefore, the value of a slice
function at any point of its domain $\Omega_{D}$ can be recovered from its values on a 
single slice $\Omega_{D}\cap\mathbb{C}_{I}$ (or two different semislices), for some $I\in\mathbb{S}$. See the \textit{Representation Theorem} in \cite{genstostru, ghiloniperotti}.
%
%
%
%
%

\begin{definition}
Given a slice function $f:\Omega_{D}\rightarrow\mathbb{H}$, the \textit{spherical derivative} of $f$ at $x\in\Omega_{D}\setminus \mathbb{R}$ 
is defined as
\begin{equation*}
\partial_{s}f(x):=\frac{1}{2}Im(x)^{-1}(f(x)-f(x^{c})),
\end{equation*}
while the \textit{spherical value} of $f$ in $x\in\Omega_{D}$ is defined as
 \begin{equation*}
 v_{s}f(x):=\frac{1}{2}(f(x)+f(x^{c})).
 \end{equation*}
\end{definition}

\begin{remark}
Both the spherical derivative and the spherical value of a slice function $f$
are slice functions. In fact, if $f=\mathcal{I}(F_{1}+\imath F_{2})$, 
$x=\alpha+I\beta\in\Omega_{D}$ and $z=\alpha+i\beta\in D$ is the corresponding
point in $\mathbb{C}$, then
$v_{s}f(x)=\mathcal{I}(F_{1}(z))$, while $\partial_{s}f(x)=\mathcal{I}(\frac{F_{2}(z)}{Im(z)})$.
Observe that, given a slice function $f$, its spherical derivative vanishes at $x$ 
if and only if the restriction $f_{|{\mathbb{S}_{x}}}$ is constant. Therefore,
since
the spherical derivative and value are constant on every sphere $\mathbb{S}_{x}$, for any $f\in\mathcal{S}(\Omega_{D})$, it holds
\begin{equation*}
\partial_{s}(\partial_{s}(f))=0\quad\mbox{ and }\quad \partial_{s}(v_{s}(f))=0.
\end{equation*}
%
\end{remark}

\subsubsection{Regularity}

Let now $D\subset \mathbb{C}$ be an open set and $z=\alpha+i\beta\in D$. Given a stem function $F=F_1+\imath F_2:D\rightarrow \hh_\mathbb{C}$
of class $\mathcal{C}^1$, then
\begin{equation*}
 \frac{\partial F}{\partial z},\frac{\partial F}{\partial\bar{z}}:D\rightarrow\hh_\mathbb{C},
\end{equation*}
defined as,
\begin{equation*}
 \frac{\partial F}{\partial z}=\frac{1}{2}\left(\frac{\partial F}{\partial \alpha}-\imath \frac{\partial F}{\partial \beta}\right)\quad \mbox{ and }\quad \frac{\partial F}{\partial \bar{z}}=\frac{1}{2}\left(\frac{\partial F}{\partial \alpha}+\imath \frac{\partial F}{\partial \beta}\right),
 \end{equation*}
are stem functions.
The previous stem functions induce the continuous \textit{slice derivatives}:
\begin{equation*}
\partial_{c} f=\mathcal{I}\left(\frac{\partial F}{\partial z}\right),\quad\overline{\partial}_{c}f=\mathcal{I}\left(\frac{\partial F}{\partial \overline{z}}\right).
\end{equation*}
While the spherical derivative controls the behavior of a slice function $f$ along the 
``spherical'' directions determined by $\mathbb{S}$ (see for instance Corollary 28 of \cite{altavilladiff}), the slice derivatives 
$\partial_{c}$ and $\overline{\partial}_{c}$, give information about the behavior along the
remaining directions (i.e. along the slices).

Now, left multiplication by $\imath $ defines a complex structure on $\mathbb{H}_{\mathbb{C}}$ and, with respect to this structure, a $C^{1}$ stem function $F:D\rightarrow\mathbb{H}_{\mathbb{C}}$
is holomorphic if and only if $\frac{\partial F}{\partial\bar z}\equiv 0$.


We are now in position to define slice regular functions (see Definition 8 in \cite{ghiloniperotti}).
\begin{definition}
Let $\Omega_{D}$ be a circular open set. A function $f=\mathcal{I}(F)\in\mathcal{S}^{1}(\Omega_{D})$ is \textit{(left) regular} if its stem function $F$ is holomorphic. 
The set of regular functions will be denoted by
\begin{equation*}
\mathcal{SR}(\Omega_{D}):=\{f\in\mathcal{S}^{1}(\Omega_{D})\,|\,f=\mathcal{I}(F),F:D\rightarrow \mathbb{H}_{\mathbb{C}}\mbox{ holomorphic}\}.
\end{equation*}
\end{definition}
Equivalently, a slice function $f\in\mathcal{S}^1(\Omega_D)$ is regular if the following equation holds:
 \begin{equation*}
 \overline{\partial}_{c}f(\alpha+J\beta)=0,\quad \forall \,\, \alpha+J\beta\in\Omega_D.
 \end{equation*}
The set of regular functions is again a real vector space and a right $\mathbb{H}$-module.
In the case in which $\Omega_{D}$ is a slice domain, the definition of regularity is equivalent to the one given in \cite{genstostru}.
\begin{remark}
As it is said in Remark 1.6 of \cite{ghilperglobal}, every regular function is real analytic and, moreover, the slice derivative $\partial_{c}f$ of a regular function $f$ is regular on the same domain.
\end{remark}

\begin{remark}
As in the holomorphic case we say that a function $f=\mathcal{I}(F)\in\mathcal{S}^{1}(\Omega_{D})$ is \textit{(left) anti-regular} if its stem function $F$ is anti-holomorphic. Equivalently if
$\partial_{c}f(\alpha+J\beta)=0$, for any $\alpha+J\beta\in\Omega_D$.
\end{remark}

\subsubsection{Product of slice functions and their zero set}

In general, the pointwise product of slice functions is not a slice 
function,
so we need another notion of product.
The following notion is of great importance in the theory and it is, indeed, the one used in the book \cite{genstostru}. The presentation that we are going to use was given in \cite{ghiloniperotti}.
\begin{definition}
Let $f=\mathcal{I}(F)$, $g=\mathcal{I}(G)$ both belonging to $\mathcal{S}(\Omega_{D})$ then the \textit{slice product} of $f$ and $g$ is the slice function
\begin{equation*}
f* g:=\mathcal{I}(FG)\in\mathcal{S}(\Omega_{D}).
\end{equation*}
\end{definition}
Explicitly, if $F=F_{1}+\imath F_{2}$ and $G=G_{1}+\imath G_{2}$ are stem functions, then
\begin{equation*} 
FG=F_{1}G_{1}-F_{2}G_{2}+\imath (F_{1}G_{2}+F_{2}G_{1}).
\end{equation*}
It is now well known that the slice product between two power series in the 
variable $x\in\mathbb{H}$ coincides with their convolution product, i.e.
if $f(x)=\sum_jx^ja_j$ and $g(x)=\sum_kx^kb_k$ are converging power series with coefficients $a_j,b_k\in\mathbb{H}$, then
\begin{equation*}
 (f*g)(x):=\sum_nx^n\left(\sum_{j+k=n}a_jb_k\right).
\end{equation*}
%

\begin{remark}\label{leibnizspherical}
An analogue of the Leibnitz formula holds for the $\partial_{s}$ and $\partial_{c}$ operators: if $f$, $g$ are slice functions then the spherical derivative of their product works as follows:
\begin{equation*}
 \partial_{s}(f* g)=(\partial_{s}f)(v_{s}g)+(v_{s}f)(\partial_{s}g).
\end{equation*}
If $f,g\in\mathcal{SR}(\Omega_{D})$ then $f* g\in \mathcal{SR}(\Omega_{D})$,
 moreover, it holds (see \cite{ghiloniperotti}, Proposition 11):
 \begin{equation*}
 \partial_{c}(f*g)=(\partial_{c}f)*g+f*(\partial_{c}g).
 \end{equation*}

\end{remark}

 The slice product of two slice functions coincides with the punctual product if the first slice function is \textit{slice preserving}.

A slice function $f=\mathcal{I}(F)$ is called \textit{slice-preserving} if, 
for all $J\in \mathbb{S}$, $f(\Omega_{D}\cap\mathbb{C}_{J})\subset \mathbb{C}_{J}$.
Slice preserving functions satisfy the following characterization.
It is well known that, if $f=\mathcal{I}(F_{1}+\imath F_{2})$ is a slice function, then $f$ is slice preserving if and only if the $\mathbb{H}$-valued components $F_{1}$, $F_{2}$ are real valued.
From the definition of slice product and thanks to Proposition \ref{slpreschar}
if $f=\mathcal{I}(F),\, g=\mathcal{I}(G)$ both belong to $\mathcal{S}(\Omega_{D})$, with $f$ slice preserving, then
\begin{equation*}
(f*g)(x)=f(x)g(x).
\end{equation*}

It is now easy to see that if $f$ is a slice preserving function
and $g$ is any slice function, then $fg=f*g=g*f$.
If both $f$ and $g$ are slice preserving, then $fg=f*g=g*f=gf$.
These functions are special since, in a certain sense, transpose the concept of complex
 function in our setting. In fact, if $h(z)=u(z)+iv(z)$ is a complex function defined
 over some domain $D\subset\mathbb{C}$ such that $h(\bar z)=\overline{h(z)}$, then the function $H:D\rightarrow \mathbb{H}_{\mathbb{C}}$ defined as $H(z)=u(z)+\imath v(z)$ is a stem function,
and $\mathcal{I}(H)$ is a slice preserving function.
As stated in \cite{gentilistoppato2}, if
$f$ is a  regular function defined on $\mathbb{B}_{\rho}$, then it is slice preserving if and only if $f$ can be expressed as a power series of the form
\begin{equation*}
f(x)=\sum_{n\in\mathbb{N}}x^{n}a_{n},
\end{equation*}
with $a_{n}$ real numbers.
\newline\\
Given any quaternionic function $f:\Omega\subset\mathbb{H}\rightarrow\mathbb{H}$ of one quaternionic variable we will denote its zero set in the following way:
\begin{equation*}
 \mathcal{Z}(f):=\{x\in\Omega\,|\,f(x)=0\}
\end{equation*}

It is possible to express the slice product of two slice functions in terms of their 
punctual product properly evaluated. The next proposition clarifies this fact; 
its proof can be found in the book \cite{genstostru} and in the context of
stem/slice functions in \cite{altavilla}.

\begin{proposition}\label{prodcomp}
Let $f,g\in\mathcal{SR}(\Omega_{D})$ then, for any $x\in\Omega_{D}\setminus \mathcal{Z}(f)$, 
\begin{equation*}
(f* g)(x)=f(x)g(f(x)^{-1}xf(x)),
\end{equation*}
and $(f* g)(x)=0$ if $f(x)=0$.
\end{proposition}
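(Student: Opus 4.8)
The plan is to reduce everything to a computation on a single slice using the stem representation, exploiting the fact that conjugation by a nonzero quaternion keeps a point on its own sphere. Fix $x=\alpha+I\beta\in\Omega_D\setminus\mathcal{Z}(f)$, set $z=\alpha+i\beta\in D$, and write $f=\mathcal{I}(F_1+\imath F_2)$ and $g=\mathcal{I}(G_1+\imath G_2)$, so that $f(x)=F_1(z)+IF_2(z)$ and, by the stem-product formula, $(f*g)(x)=\big(F_1G_1-F_2G_2\big)(z)+I\big(F_1G_2+F_2G_1\big)(z)$. The right-hand side of the claimed identity will be expanded and matched against this expression.

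First I would record the key geometric observation: the point $w:=f(x)^{-1}xf(x)$ lies on the same sphere as $x$. Indeed, writing $x=\alpha+I\beta$ one computes $w=\alpha+\beta\,f(x)^{-1}If(x)=\alpha+I'\beta$, and since conjugation by $f(x)$ is an inner automorphism of $\mathbb{H}$ one checks $(I')^2=f(x)^{-1}I^2f(x)=-1$, so $I'=f(x)^{-1}If(x)\in\mathbb{S}$. The crucial consequence is that $w$ and $x$ correspond to the \emph{same} complex point $z=\alpha+i\beta$, whence the slice function $g$ is evaluated via the same stem data: $g(w)=G_1(z)+I'G_2(z)$, with the identical $G_1(z),G_2(z)$.

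The heart of the proof is then a short expansion. Abbreviating $F_j=F_j(z)$, $G_j=G_j(z)$ and expanding $f(x)g(w)=(F_1+IF_2)\big(G_1+f(x)^{-1}If(x)\,G_2\big)$, the term carrying $I'$ simplifies by the telescoping cancellation $(F_1+IF_2)f(x)^{-1}=f(x)f(x)^{-1}=1$, which turns it into $If(x)G_2=I(F_1+IF_2)G_2=IF_1G_2-F_2G_2$. Collecting all contributions yields $(F_1G_1-F_2G_2)+I(F_2G_1+F_1G_2)$, which is exactly $(f*g)(x)$ read off from the stem-product formula. This is where one must be careful, since $F_j,G_j$ are genuine quaternions that do not commute; the cancellation above is precisely what makes the noncommutative bookkeeping close up, and I expect it to be the only real obstacle.

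Finally, for the degenerate case $f(x)=0$ the inverse $f(x)^{-1}$ is unavailable, so I would argue directly from the stem-product formula. The hypothesis $f(x)=0$ reads $F_1=-IF_2$; substituting this into $(F_1G_1-F_2G_2)+I(F_1G_2+F_2G_1)$ and using $I^2=-1$ produces the four terms $-IF_2G_1,\ -F_2G_2,\ F_2G_2,\ IF_2G_1$, which cancel in pairs, giving $(f*g)(x)=0$. This settles the two cases and establishes the proposition.
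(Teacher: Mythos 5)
Your proof is correct, and it is essentially the standard argument: the paper itself does not prove Proposition \ref{prodcomp} but defers to \cite{genstostru} and \cite{altavilla}, and your stem-function computation (conjugation by $f(x)$ keeps the point on $\mathbb{S}_x$, hence the same stem data $G_1(z),G_2(z)$, followed by the telescoping cancellation $(F_1+IF_2)f(x)^{-1}=1$) is precisely the proof given in the stem/slice framework of \cite{altavilla}. The degenerate case $f(x)=0$ via $F_1=-IF_2$ is also handled correctly.
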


Given a regular function $f:\Omega_{D}\rightarrow\mathbb{H}$ we will sometimes use the following notation:
\begin{equation*}
T_{f}(x):=f(x)^{-1}xf(x).
\end{equation*}

Recall from \cite{genstostru,ghiloniperotti}, that given any slice function $f=\mathcal{I}(F)\in\mathcal{S}(\Omega_{D})$, then $F^{c}(z)=F(z)^{c}:=F_{1}(z)^{c}+\imath F_{2}(z)^{c}$ is a stem function.
We define the \textit{slice conjugate} of $f$ as the function $f^{c}:=\mathcal{I}(F^{c})\in\mathcal{S}(\Omega_{D})$, while the \textit{symmetrization} of $f$ is defined as $f^{s}:=f^c* f$.
We have that $(FG)^{c}=G^{c}F^{c}$, and so $(f* g)^{c}=g^{c}* f^{c}$, i.e.
$f^{s}=(f^{s})^{c}$.
Moreover it holds
  \begin{equation*}
   (f* g)^{s}=f^{s}g^{s}\quad \mbox{and }\quad (f^{c})^{s}= f^{s}.
  \end{equation*}
Notice that, if $f$ is slice preserving, then $f^{c}=f$ and so $f^{s}=f^{2}$.
We are going now to spend some words on the geometry of the zero locus of a slice function.
First of all, thanks to the Representation Theorem (see \cite{genstostru,ghiloniperotti}), given a slice function $f:\Omega_{D}\rightarrow \mathbb{H}$,
then, for any $x\in\Omega_{D}$, either $\mathcal{Z}(f)\cap\mathbb{S}_{x}=\{y\}$ or $\mathbb{S}_{x}\subset\mathcal{Z}(f)$ or $\mathbb{S}_{x}\cap\mathcal{Z}(f)=\emptyset$.
These three cases
justify the following definition.
\begin{definition}
Let $f:\Omega_{D}\rightarrow \mathbb{H}$ be any slice function with
zero locus $\mathcal{Z}(f)$. Let $x\in\Omega_{D}\cap\mathcal{Z}(f)$ 
be a zero for $f$. We give the following names:
\begin{itemize}
\item if $x\in\mathbb{R}$, then it is called a \textit{real} zero;
\item if $y\notin\mathbb{R}$ and $\mathbb{S}_{y}\cap\mathcal{Z}(f)=\{y\}$,
then $y$ is called an $\mathbb{S}$\textit{-isolated (non-real)} zero;
\item if $x\notin\mathbb{R}$ and $\mathbb{S}_{x}\subset\mathcal{Z}(f)$, then
$x$ is called a \textit{spherical zero}. 
\end{itemize}
\end{definition}

\begin{remark}\label{structpres}
If $f=\mathcal{I}(F)$ is a slice preserving function then it cannot have non-real $\mathbb{S}$-isolated zeros. In fact, since
%
the components of $F$ are real-valued functions, then $0=f(\alpha+I\beta)=F_{1}(\alpha+i\beta)+IF_{2}(\alpha+i\beta)$ if and only if 
$F_{1}(\alpha+i\beta)=0$ and $F_{2}(\alpha+i\beta)=0$ and so 
$f_{|{\mathbb{S}_{\alpha+I\beta}}}\equiv 0$.
\end{remark}

Given now two slice functions $f,g:\Omega_{D}\rightarrow\mathbb{H}$ thanks to Proposition \ref{prodcomp},
it holds,
\begin{equation}\label{zerosofprod}
\mathcal{Z}(f)\subset\mathcal{Z}(f*g),\quad\mbox{ while in general }\quad\mathcal{Z}(g)\not\subset\mathcal{Z}(f*g).
\end{equation}
What is true in general is the following equality:
\begin{equation*}
\bigcup_{x\in \mathcal{Z}(f* g)} \mathbb{S}_{x}=\bigcup_{x\in  \mathcal{Z}(f)\cup  \mathcal{Z}(g)}\mathbb{S}_{x}
\end{equation*}
\begin{example}\label{polexa}
We give now a couple of examples hoping to clarify the previous situations.
Given two generic quaternions $q_{0},q_{1}\in\mathbb{H}$, consider
the quaternionic polynomial $P_{q_{0},q_{1}}:\mathbb{H}\rightarrow\mathbb{H}$ 
defined as $P_{q_{0},q_{1}}(x)=(x-q_{0})*(x-q_{1})=x^{2}-x(q_{0}+q_{1})+q_{0}q_{1}$. This is of course a regular function which vanishes at $q_{0}$ but, in general, not at $q_{1}$. If, in fact, $q_{0}, q_{1}\notin\mathbb{R}$ and $q_{1}\neq q_{0}^{c}$, then the (possibly coincident) roots of $P_{q_{0},q_{1}}$ are $q_{0}$ and
$(q_{1}- q^{c}_{0})^{-1}q_{1}(q_{1}- q^{c}_{0})$ (see section 3.5 of \cite{genstostru}). 
If $q_{l}$, with $l=0,1$ is a real number, then $(x-q_{l})$ is a slice preserving function; therefore, in this case $(x-q_{0})*(x-q_{1})=(x-q_{1})*(x-q_{0})$ and both $q_{0},q_{1}$ are roots. The case in which $q_{1}=q_{0}^{c}$ will be discussed later due to its own importance. Let see now two concrete examples.
\begin{itemize}
\item The polynomial $P_{i,j}(x)=(x-i)*(x-j)=x^{2}-x(i+j)+k$, vanishes only at $i$.
In fact, the second root is given by $(j+i)^{-1}j(j+i)$ that is exactly $i$.
So $i$ is an $\mathbb{S}$-isolated zero for $P_{i,j}$ and it is its only root.
\item The polynomial $P_{	i,2i}(x)=(x-i)*(x-2i)=x^{2}-3xi-2$, vanishes only at $i$ and at $2i$, therefore, $i$ and $2i$ are both $\mathbb{S}$-isolated zeros for $P_{i,2i}$. This polynomial is such that $P_{i,2i}(\mathbb{C}_{i})\subset \mathbb{C}_{i}$, i.e. it preserves the slice $\mathbb{C}_{i}$. Such kind of functions are called \textit{one-slice preserving} and are widely studied in~\cite{altavilladefcj,altavilladefexp}.
\end{itemize}

\end{example}


If we now add regularity, we obtain the following results.

\begin{theorem}[\cite{ghiloniperotti}, Theorem 20]\label{sliceidentity} Let $\Omega_{D}$ be a circular domain. If $f$ is  regular and $f^{s}$ does not vanish identically, then
\begin{equation*}
\mathbb{C}_{J}\cap\bigcup_{x\in \mathcal{Z}(f)}\mathbb{S}_{x}
\end{equation*}
is closed and discrete in $\Omega_{D}\cap\mathbb{C}_{J}$ for all $J\in\mathbb{S}$.
If $\Omega_{D}\cap\mathbb{R}\neq\emptyset$, then $f^{s}\equiv 0$ if and only if $f\equiv 0$.
\end{theorem}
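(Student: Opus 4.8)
The plan is to push everything down to the level of the holomorphic stem function and to reduce the geometric statement about $\mathcal{Z}(f)$ to the classical one-variable identity principle, using the symmetrization $f^{s}$ as a bridge. Write $f=\mathcal{I}(F)$ with $F=F_{1}+\imath F_{2}$ holomorphic, and recall $f^{s}=f^{c}*f=\mathcal{I}(F^{c}F)$. First I would compute this stem function explicitly, obtaining
$$F^{c}F=\left(|F_{1}|^{2}-|F_{2}|^{2}\right)+\imath\,2\,Re(F_{1}^{c}F_{2}),$$
so that its two components $N_{1}:=|F_{1}|^{2}-|F_{2}|^{2}$ and $N_{2}:=2Re(F_{1}^{c}F_{2})$ are \emph{real valued}. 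Hence $f^{s}$ is slice preserving and, by Remark \ref{structpres}, its zero set is a union of spheres, namely $\mathcal{Z}(f^{s})=\{\alpha+I\beta : N_{1}(\alpha+i\beta)=N_{2}(\alpha+i\beta)=0\}$.

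The central step is the sphere-wise identity $\bigcup_{x\in\mathcal{Z}(f)}\mathbb{S}_{x}=\mathcal{Z}(f^{s})$, which I would establish through the trichotomy furnished by the Representation Theorem. Fix $z=\alpha+i\beta$; a point of $\mathbb{S}_{\alpha+I\beta}$ is a zero of $f$ exactly when $F_{1}(z)+IF_{2}(z)=0$. If $F_{2}(z)=0$, the sphere meets $\mathcal{Z}(f)$ iff $F_{1}(z)=0$, equivalently $N_{1}(z)=N_{2}(z)=0$; if $F_{2}(z)\neq 0$, the only candidate is $I=-F_{1}(z)F_{2}(z)^{-1}$, which lies in $\mathbb{S}$ precisely when $|F_{1}(z)|=|F_{2}(z)|$ and $Re\big(F_{1}(z)F_{2}(z)^{c}\big)=0$, i.e. again when $N_{1}(z)=N_{2}(z)=0$. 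This gives the displayed equality. Alternatively one could shortcut it by invoking the already stated identity $\bigcup_{x\in\mathcal{Z}(f*g)}\mathbb{S}_{x}=\bigcup_{x\in\mathcal{Z}(f)\cup\mathcal{Z}(g)}\mathbb{S}_{x}$ with $g=f^{c}$, using that $f$ and $f^{c}$ carry the same spheres of zeros.

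Intersecting with a slice and using that $f^{s}$ is slice preserving, $\mathbb{C}_{J}\cap\bigcup_{x\in\mathcal{Z}(f)}\mathbb{S}_{x}=\mathbb{C}_{J}\cap\mathcal{Z}(f^{s})$ equals the zero set of the restriction $f^{s}|_{\mathbb{C}_{J}}$, which is a holomorphic function of the one complex variable $\alpha+J\beta$ on $\Omega_{D}\cap\mathbb{C}_{J}$ (corresponding to $N_{1}+\imath N_{2}$ on $D$). When $f^{s}\not\equiv 0$, the pair $(N_{1},N_{2})$ is not identically zero, and the conjugation symmetry $F(\overline z)=\overline{F(z)}$ prevents this holomorphic restriction from vanishing on any connected component of $D$; the classical identity principle then forces $\mathcal{Z}(f^{s})\cap\mathbb{C}_{J}$ to be closed and discrete in $\Omega_{D}\cap\mathbb{C}_{J}$, which is the first assertion. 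For the second assertion, $f\equiv 0\Rightarrow f^{s}\equiv 0$ is immediate. Conversely, if $f^{s}\equiv 0$ then $N_{1}\equiv N_{2}\equiv 0$; restricting to the real axis, where oddness gives $F_{2}\equiv 0$, the relation $N_{1}\equiv 0$ forces $F_{1}\equiv 0$ there as well, so $F$ vanishes on the open interval $D\cap\mathbb{R}$. Since $\Omega_{D}\cap\mathbb{R}\neq\emptyset$ forces $D$ to be connected, the componentwise holomorphic identity principle yields $F\equiv 0$, hence $f\equiv 0$.

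The step I expect to be the main obstacle is the central sphere-wise identity: one must treat uniformly the three kinds of zeros (real, $\mathbb{S}$-isolated, spherical) and verify that the condition ``$-F_{1}F_{2}^{-1}\in\mathbb{S}$'' coincides exactly with ``$N_{1}=N_{2}=0$'', which is the quaternionic computation at the heart of the proof. A secondary but genuine subtlety is the handling of a possibly disconnected $D$: for the first assertion one needs the stem symmetry to rule out vanishing on a single component, and for the equivalence one needs that $\Omega_{D}\cap\mathbb{R}\neq\emptyset$ actually forces $D$ connected, so that the one-variable identity principle can be applied to conclude $F\equiv 0$ from vanishing on a real segment.
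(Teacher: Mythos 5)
The paper does not prove this statement: it is imported verbatim from \cite{ghiloniperotti}, Theorem 20, so there is no internal proof to compare against. Your reconstruction is correct and is essentially the standard argument behind that theorem: compute the stem of $f^{s}=\mathcal{I}(F^{c}F)$, observe its components $N_{1}=|F_{1}|^{2}-|F_{2}|^{2}$ and $N_{2}=2\,Re(F_{1}^{c}F_{2})$ are real so $f^{s}$ is slice preserving with circular zero set, identify $\bigcup_{x\in\mathcal{Z}(f)}\mathbb{S}_{x}=\mathcal{Z}(f^{s})$ via the trichotomy on $F_{2}(z)$ (and your check that $-F_{1}F_{2}^{-1}\in\mathbb{S}$ is equivalent to $N_{1}=N_{2}=0$ is right, using $Re(F_{1}F_{2}^{c})=Re(F_{1}^{c}F_{2})$), and then apply the one--variable identity principle to the holomorphic function $N_{1}+iN_{2}$ on each slice. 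The only wording I would tighten concerns disconnected $D$: the stem symmetry by itself does not forbid $N_{1}+\imath N_{2}$ from vanishing on a single component (picture $D$ with a symmetric component meeting $\mathbb{R}$ plus a separate conjugate pair of components on which $f^{s}\equiv 0$); what actually rescues both assertions is that $\Omega_{D}$ is assumed to be a \emph{domain}, hence connected, which forces $D$ to be either connected or exactly one conjugate pair of components interchanged by $z\mapsto\bar z$ --- in those two cases your symmetry argument does close the gap, and similarly ``$\Omega_{D}\cap\mathbb{R}\neq\emptyset$ forces $D$ connected'' holds only in conjunction with the connectedness of $\Omega_{D}$. With that caveat made explicit, the proof is complete.
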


There is also a viceversa, namely, the \textit{Identity principle}~\cite{genstostru, altavilla}], stating that 
given $f=\mathcal{I}(F):\Omega_{D}\rightarrow \mathbb{H}$ a regular function defined over a circular domain
$\Omega_D$, if there exist $K, J\in\mathbb{S}$ with $K\neq J$ such that both the sets $(\Omega_{D}\cap\mathbb{C}_{J}^{+})\cap\mathcal{Z}(f)$ and $(\Omega_{D}\cap\mathbb{C}_{K}^{+})\cap\mathcal{Z}(f)$ admit accumulation points, then $f\equiv 0$ on $\Omega_{D}$.
If $\Omega_{D}$ is a slice domain and if $f:\Omega_{D}\rightarrow \mathbb{H}$ is a slice regular function, then the previous statement
simplifies in the following way:
if there exists $I\in\mathbb{S}$ such that $(\Omega_{D}\cap\mathbb{C}_{I})\cap \mathcal{Z}(f)$ has an accumulation point, then $f\equiv 0$ on $\Omega_{D}$. 

It is also well known~\cite{genstostru}, that if $f\in\mathcal{SR}(\Omega_{D})$ and $\mathbb{S}_{x}\subset\Omega_{D}$ then the zeros of $f^{c}$ on
$\mathbb{S}_{x}$ are in bijective correspondence with those of $f$. Moreover $f^{s}$ vanishes exactly on the sets $\mathbb{S}_{x}$
on which $f$ has a zero.

The next corollary will be used a lot in the next pages. 
We start with a notation: 
given any set $V\subset\mathbb{H}$, we denote by $V_{\mathbb{C}}$, the following set:
\begin{equation*}
V_{\mathbb{C}}:=\{\alpha+i\beta\in\mathbb{C}\,|\,\alpha+I\beta\in V,\mbox{ for some }I\in\mathbb{S}\}\subseteq\mathbb{C}.
\end{equation*}
%
%
%
%
The previous set is constructed so that it takes trace, in the complex plane,
of all the elements of $V$.  Therefore, if, for instance $V=\{3+j,1+j,1+k\}\subset\mathbb{H}$, then $V_{\mathbb{C}}=\{3+i,1+i\}\subset\mathbb{C}$, while, for instance
$V\cap\mathbb{C}_{j}=\{3+j,1+j\}$, $V\cap\mathbb{C}_{k}=\{1+k\}$ and $V\cap\mathbb{C}_{I}=\emptyset$, for any $I\in\mathbb{S}\setminus\{j,k\}$.


\begin{corollary}\label{zerocpt}
Let $\Omega_{D}$ be a slice domain and let $f:\Omega_{D}\rightarrow\mathbb{H}$ be a regular function.
Let $K\subset D$ be a compact set 
containing an accumulation point,
then, either $\mathcal{Z}(f)_{\mathbb{C}}\cap K$ is finite or $f\equiv 0$.
\end{corollary}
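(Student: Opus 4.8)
The plan is to reduce the statement to the classical identity theorem for holomorphic functions of one complex variable, by transporting the whole problem onto a single slice via the symmetrization $f^{s}$. If $f\equiv 0$ the second alternative holds trivially, so I would assume $f\not\equiv 0$ and aim to prove that $\mathcal{Z}(f)_{\mathbb{C}}\cap K$ is finite.

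First I would replace $f$ by its symmetrization $f^{s}=f^{c}* f$. Since $f$ is regular, $f^{s}\in\mathcal{SR}(\Omega_{D})$, and it is slice preserving (its components are real valued, so $f^{s}=(f^{s})^{c}$); moreover, since $\Omega_{D}$ is a slice domain we have $\Omega_{D}\cap\mathbb{R}\neq\emptyset$, so $f\not\equiv 0$ forces $f^{s}\not\equiv 0$ by Theorem \ref{sliceidentity}. The \emph{key bookkeeping observation} is that $f$ and $f^{s}$ share the same complex trace of zeros, namely $\mathcal{Z}(f)_{\mathbb{C}}=\mathcal{Z}(f^{s})_{\mathbb{C}}$: indeed $f^{s}$ vanishes exactly on those spheres $\mathbb{S}_{x}$ meeting $\mathcal{Z}(f)$, so for $z=\alpha+i\beta$ one has $z\in\mathcal{Z}(f)_{\mathbb{C}}$ iff $\mathbb{S}_{\alpha+I\beta}\cap\mathcal{Z}(f)\neq\emptyset$ iff $\mathbb{S}_{\alpha+I\beta}\subset\mathcal{Z}(f^{s})$ iff $z\in\mathcal{Z}(f^{s})_{\mathbb{C}}$.

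Next I would fix any $J\in\mathbb{S}$ and identify $\mathbb{C}_{J}$ with $\mathbb{C}$ through $\alpha+J\beta\leftrightarrow\alpha+i\beta$; under this identification $\Omega_{D}\cap\mathbb{C}_{J}$ corresponds to $D$, and the restriction $g:=f^{s}|_{\Omega_{D}\cap\mathbb{C}_{J}}$ becomes an ordinary holomorphic function $\tilde{g}:D\rightarrow\mathbb{C}$, precisely because $f^{s}$ is slice preserving and regular (so it maps $\Omega_{D}\cap\mathbb{C}_{J}$ into $\mathbb{C}_{J}$ and its stem function is holomorphic). Since the zero set of a slice preserving function is a union of spheres centred on the real axis, a point $z\in D$ lies in $\mathcal{Z}(f^{s})_{\mathbb{C}}$ exactly when $\tilde{g}(z)=0$; hence $\mathcal{Z}(f)_{\mathbb{C}}=\mathcal{Z}(\tilde{g})$ as subsets of $D$.

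At this point the problem is purely complex-analytic. Assume, for contradiction, that $\mathcal{Z}(\tilde{g})\cap K$ is infinite. As an infinite subset of the compact set $K\subset D$ it admits, by Bolzano--Weierstrass, an accumulation point $w_{0}\in K$, and since $K\subset D$ with $D$ open, $w_{0}$ is an interior point of $D$. Transporting back to the slice, $(\Omega_{D}\cap\mathbb{C}_{J})\cap\mathcal{Z}(f^{s})$ then has an accumulation point, so the (slice-domain form of the) Identity principle applied to the regular function $f^{s}$ forces $f^{s}\equiv 0$, whence $f\equiv 0$ by Theorem \ref{sliceidentity}, contradicting our assumption. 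Therefore $\mathcal{Z}(f)_{\mathbb{C}}\cap K=\mathcal{Z}(\tilde{g})\cap K$ is finite. I expect the only delicate points to be the verification that $\mathcal{Z}(f)_{\mathbb{C}}=\mathcal{Z}(f^{s})_{\mathbb{C}}$ and the clean identification of $f^{s}|_{\mathbb{C}_{J}}$ with a genuine holomorphic function; once these are in place, the compactness and identity-principle steps are routine.
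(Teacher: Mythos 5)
Your proposal is correct and follows essentially the same route as the paper: pass to the symmetrization $f^{s}$, note that every zero of $f$ produces a whole sphere of zeros of $f^{s}$ (so that an infinite set $\mathcal{Z}(f)_{\mathbb{C}}\cap K$ yields, by compactness of $K$, an accumulation point of $\mathcal{Z}(f^{s})$ on a slice), then apply the Identity Principle to get $f^{s}\equiv 0$ and conclude $f\equiv 0$ via Theorem \ref{sliceidentity}. The only cosmetic difference is that you make the restriction of $f^{s}$ to a slice into an explicit holomorphic function before invoking the identity theorem, whereas the paper applies the slice-domain Identity Principle directly.
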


\begin{proof}
Let assume that $f\not\equiv 0$ is a regular function such that $\mathcal{Z}(f)_{\mathbb{C}}\cap K$ is not finite. Then, thanks to the first inclusion in equation \eqref{zerosofprod} with $g=f^{c}$, $\mathcal{Z}(f)_{\mathbb{C}}\cap K$ contains a convergent sequence
$\{q_{n}\}_{n\in\mathbb{N}}$ such that $\mathbb{S}_{q_{n}}\subset \mathcal{Z}(f^{s})$, but then, thanks to the Identity Principle, since $\Omega_{D}\cap\mathbb{R}\neq \emptyset$, $f^{s}\equiv 0$ and, thanks to Theorem \ref{sliceidentity}, this is equivalent to $f\equiv 0$.
\end{proof}

Thanks to Remark~\ref{structpres}, we have the following.

\begin{corollary}\label{zerocptcor}
Let $\Omega_{D}$ be a slice domain and let $f:\Omega_{D}\rightarrow\mathbb{H}$ 
be a non-constant slice preserving regular function. Let $\rho>0$ such that $\overline{\mathbb{B}_{\rho}}$ is contained in 
$\Omega_{D}$, then $\mathcal{Z}(f)\cap \mathbb{B}_{\rho}$ and 
$\mathcal{Z}(f)\cap\partial \mathbb{B}_{\rho}$ are finite unions of real points and isolated spheres.
\end{corollary}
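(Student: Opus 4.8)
The plan is to reduce everything to the complex trace and then combine the finiteness statement of Corollary~\ref{zerocpt} with the structural description of zeros of slice preserving functions from Remark~\ref{structpres}. First I would record the two elementary facts that make the reduction work. Since $f$ is non-constant, in particular $f\not\equiv 0$. Moreover, because $|\alpha+I\beta|^{2}=\alpha^{2}+\beta^{2}$ depends only on the corresponding point $\alpha+i\beta\in\mathbb{C}$, the circular set $\overline{\mathbb{B}_{\rho}}$ has complex trace $(\overline{\mathbb{B}_{\rho}})_{\mathbb{C}}=\overline{D(0,\rho)}$, the closed disc of radius $\rho$ centered at the origin. As $\overline{\mathbb{B}_{\rho}}\subset\Omega_{D}$, this closed disc is a compact subset of $D$, and it obviously contains accumulation points.

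Next I would apply Corollary~\ref{zerocpt} with $K=\overline{D(0,\rho)}$. Since $f\not\equiv 0$, the set $\mathcal{Z}(f)_{\mathbb{C}}\cap\overline{D(0,\rho)}$ is therefore finite, say equal to $\{z_{1},\dots,z_{m}\}$ with $z_{l}=\alpha_{l}+i\beta_{l}$. By the very definition of the trace, every zero of $f$ lying in $\overline{\mathbb{B}_{\rho}}$ projects onto one of these finitely many complex points, so it suffices to reconstruct the fibre over each $z_{l}$.

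Here the slice preserving hypothesis enters. By Remark~\ref{structpres}, $f$ has no non-real $\mathbb{S}$-isolated zeros, so each $z_{l}$ falls into exactly one of two cases: if $\beta_{l}=0$ the corresponding zero is the single real point $\alpha_{l}$, while if $\beta_{l}\neq 0$ the entire sphere $\mathbb{S}_{\alpha_{l}+I\beta_{l}}$ is contained in $\mathcal{Z}(f)$, and this sphere is isolated precisely because its trace $z_{l}$ is an isolated point of the finite set $\{z_{1},\dots,z_{m}\}$. Finally I would separate interior from boundary using the modulus: since $|x|=|z_{l}|$ is constant along the sphere $\mathbb{S}_{\alpha_{l}+I\beta_{l}}$ (respectively equals $|\alpha_{l}|$ at a real point), each such point or sphere lies entirely in $\mathbb{B}_{\rho}$ when $|z_{l}|<\rho$ and entirely in $\partial\mathbb{B}_{\rho}$ when $|z_{l}|=\rho$. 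Splitting the finite index set accordingly exhibits $\mathcal{Z}(f)\cap\mathbb{B}_{\rho}$ and $\mathcal{Z}(f)\cap\partial\mathbb{B}_{\rho}$ as the two claimed finite unions of real points and isolated spheres.

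I do not expect a serious obstacle. The only points requiring mild care are the bookkeeping observation that a slice preserving zero sphere never straddles $\partial\mathbb{B}_{\rho}$ (the modulus being constant on it, the sphere is either wholly inside or wholly on the boundary), and the verification that Corollary~\ref{zerocpt} is genuinely applicable, namely that the trace of $\overline{\mathbb{B}_{\rho}}$ is a compact subset of $D$ possessing an accumulation point.
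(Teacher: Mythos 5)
Your proposal is correct and follows exactly the route the paper intends: the paper states this corollary without a written proof, deriving it implicitly from Corollary \ref{zerocpt} applied to the compact trace of $\overline{\mathbb{B}_{\rho}}$ together with Remark \ref{structpres}, which is precisely your argument. Your additional bookkeeping (that a zero sphere of a slice preserving function has constant modulus and hence never straddles $\partial\mathbb{B}_{\rho}$) is a correct and worthwhile detail that the paper leaves tacit.
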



The next definition is needed to define the multiplicity of a zero of slice function at a point.
Moreover it provides a set of polynomial functions that will give several information in other
parts of the theory. We already mentioned them in Example \ref{polexa}:  it 
was the case in which $q_{1}=q_{0}^{c}$. References for this set of functions are section 7.2 of \cite{ghiloniperotti}
and the whole paper \cite{ghiloniperotti3}, in which they play a fundamental role.
\begin{definition}
The \textit{characteristic polynomial} of $q$ is the  regular function $(x-q)^{s}:\mathbb{H}\rightarrow\mathbb{H}$ defined by:
 \begin{equation*}
(x-q)^{s}=(x-q)* (x-q^c)=x^2-x(q+q^c)+qq^c.
 \end{equation*}
\end{definition}
\begin{remark}
 The following facts about the characteristic polynomial are quite obvious. If the reader needs more details we refer again to \cite{ghiloniperotti}.
 \begin{itemize}
  \item $(x-q)^{s}$ is a slice preserving function. 
  \item Two characteristic polynomials $(x-q)^{s}$, $(x-q')^{s}$ coincide if and only if $\mathbb{S}_q=\mathbb{S}_{q'}$. 
  \item $\mathcal{Z}((x-q)^{s})=\mathbb{S}_q$. 
  \item From Proposition \ref{prodcomp}, it holds, for any $x\notin\mathbb{S}_{q}$,
  \begin{equation*}
  (x-q)^{s}=(x-q)(T_{(x-q)}(x)-q^{c})=(x-q)((x-q)^{-1}x(x-q)-q^{c})
  \end{equation*}

 \end{itemize}
\end{remark}

Now, from Proposition 3.17 of \cite{genstostru} and Corollary 23 of \cite{ghiloniperotti}, if $f:\Omega_{D}\rightarrow \mathbb{H}$ is a regular function and $q \in \mathcal{Z}(f)$, then there exists $g\in\mathcal{SR}(\Omega_D)$ such that $f(x)=(x-q )* g(x)$. 
Moreover, if $q $ is a spherical zero, then, $(x-q )^{s}$ divides $f$.
Therefore, in both cases, the characteristic polynomial $(x-q )^{s}$ divides $f^{s}$.

We can now recall the following definition (see Definition 14 in \cite{ghiloniperotti}).
\begin{definition}
 Let $f\in\mathcal{SR}(\Omega_D)$ such that $f^{s}$ does not vanish identically. Given $n\in\mathbb{N}$ and $q \in \mathcal{Z}(f)$, we say that $q $ is a zero of $f$
 of \textit{total multiplicity} $n$, and we will denote it by $m_f(q )$,  if $((x-q )^{s})^n\mid f^{s}$ in $\mathcal{SR}(\Omega_{D})$ and $((x-q )^{s})^{n+1}\nmid f^{s}$ in $\mathcal{SR}(\Omega_{D})$. 
 If $m_f(q )=1$, then $q $ is called a \textit{simple zero} of $f$.
\end{definition}

\begin{remark}
If $f\in\mathcal{SR}(\Omega_{D})$ is slice preserving, then, its zeros can only be
real isolated or spherical isolated. Therefore,
if $\{r_{h}\}_{h\in\mathbb{N}}$ is the set of real zeros of $f$, $\{S_{k}\}_{k\in\mathbb{N}}$ the set of 
spheres containing a spherical zero of $f$ (i.e. $f|_{S_{k}}\equiv 0$),
for any $k$, $q_{k}$ is any element in
$S_{k}$, and $\rho>0$ is such that the ball $\overline{\mathbb{B}_{\rho}}$
centered in zero of radius $\rho$, is contained in $\Omega_{D}$, then, 
\begin{equation*}
f_{|{\mathbb{B}_{\rho}}}(x)=x^{n}\left(\prod_{r_{h}\in(\mathcal{Z}(f)\cap \mathbb{B}_{\rho})\cap\mathbb{R}}(x-r_{h})^{n_{h}}\right)\left(\prod_{S_{k}\in(\mathcal{Z}(f)\cap \mathbb{B}_{\rho})\setminus\mathbb{R}}{((x-q_{k})^{s})}^{n_{k}}\right)g(x),
\end{equation*}
where $n,n_{h},n_{k}$ are all positive integers, the products are all finite (thanks to
Corollary \ref{zerocptcor}), and $g$ is a slice preserving regular function which has
no zeros in $\mathbb{B}_{\rho}$. In this situation, since $f^{s}=f^{2}$, then, $m_{f}(0)=2n$,
$m_{f}(q_{l})=2n_{l}$, for any $l$.

Analogues considerations hold for $\partial \mathbb{B}_{\rho}$.
\end{remark}


%
%

\subsubsection{Semiregular functions and their poles}
We will recall now some concept of the theory of meromorphic functions in the context of regularity in the space of quaternions. We will start by introducing the 
concept of slice reciprocal.
Since we are mostly interested in functions defined on euclidean ball
centered in zero of $\mathbb{H}$, the main reference will be the monograph
\cite{genstostru}. However further developments and generalizations on this topic are obtained in \cite{ghilperstosing}. In fact, part of the approach
we are going to use come from this last mentioned paper.

We will first introduce
the notion of reciprocal in the framework of slice functions.
Some material about this notion is collected in \cite{genstostru} and, in more general contexts in \cite{ghilperstosing,altavilla}.

\begin{definition}
Let $f=\mathcal{I}(F)\in\mathcal{SR}(\Omega_{D})$. We call the \textit{slice reciprocal} of $f$ the slice function
\begin{equation*}
f^{-*}:\Omega_{D}\setminus \mathcal{Z}(f^{s})\rightarrow \hh, \qquad f^{-*}=\mathcal{I}((F^cF)^{-1}F^c).
\end{equation*}
\end{definition}
From the previous definition it follows that, if $x\in\Omega_D\setminus \mathcal{Z}(f^{s})$, then
\begin{equation*} 
f^{-*}(x)=(f^{s}(x))^{-1}f^{c}(x).
\end{equation*}
The regularity of the reciprocal just defined follows thanks to the last equality.
The following proposition justify the name \textit{slice reciprocal}.
Observe that, if $f$ is slice preserving, then $f^{c}=f$ and so $f^{-*}=f^{-1}$ where it is defined. Moreover $(f^{c})^{-*}=(f^{-*})^{c}$.

\begin{proposition}[\cite{genstostru, altavilla}]
Let $f\in\mathcal{SR}(\Omega_{D})$ such that $\mathcal{Z}(f)=\emptyset$, then $f^{-*}\in\mathcal{SR}(\Omega_{D})$ and 
\begin{equation*}
f* f^{-*}=f^{-*}* f=1.
\end{equation*}
\end{proposition}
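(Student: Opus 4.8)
The plan is to verify everything at the level of stem functions and then transport the conclusion back to $\Omega_D$ through the correspondence $f*g=\mathcal{I}(FG)$. Write $f=\mathcal{I}(F)$ with $F=F_1+\imath F_2$ holomorphic, so that by definition $f^{-*}=\mathcal{I}((F^cF)^{-1}F^c)$ and $f^s=f^c*f=\mathcal{I}(F^cF)$.

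First I would check that $f^{-*}$ is genuinely defined on all of $\Omega_D$ and regular. Since $\mathcal{Z}(f)=\emptyset$ and $f^s$ vanishes exactly on the spheres $\mathbb{S}_x$ carrying a zero of $f$, we get $\mathcal{Z}(f^s)=\emptyset$, so the domain $\Omega_D\setminus\mathcal{Z}(f^s)$ is all of $\Omega_D$. Because $f^s$ is slice preserving, its stem function $F^cF$ has real $\mathbb{H}$-components, i.e. $F^cF$ takes values in the commutative sub-line $\mathbb{C}\cdot 1_{\mathbb{H}}\subset\mathbb{H}_\mathbb{C}$; on this line the inverse is the ordinary complex reciprocal and is defined precisely where $F^cF\neq 0$, that is everywhere. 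For regularity, the involution $p+\imath q\mapsto p^c+\imath q^c$ is $\mathbb{R}$-linear and commutes with left multiplication by $\imath$, hence commutes with $\partial/\partial\bar z$; thus $F^c$ is holomorphic, $F^cF$ is holomorphic, and the reciprocal of a nowhere-vanishing scalar holomorphic stem function is again holomorphic. Therefore $f^{-*}\in\mathcal{SR}(\Omega_D)$.

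The key algebraic point is the identity $F^cF=FF^c$. Expanding both products via $(p_1+\imath q_1)(p_2+\imath q_2)=p_1p_2-q_1q_2+\imath(p_1q_2+q_1p_2)$, the scalar parts $F_1^cF_1-F_2^cF_2$ and $F_1F_1^c-F_2F_2^c$ agree because $p^cp=pp^c=|p|^2$, while the $\imath$-parts $F_1^cF_2+F_2^cF_1$ and $F_1F_2^c+F_2F_1^c$ both reduce to $2\,Re(F_1^cF_2)$ using $Re(ab)=Re(ba)$ and the symmetry of the bilinear form $(a,b)\mapsto Re(a^cb)$. Equivalently, this is the already-noted fact $f*f^c=f^c*f=f^s$.

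Finally I would assemble the two identities. Since $(F^cF)^{-1}$ lies in $\mathbb{C}\cdot 1_{\mathbb{H}}$ it is central, commuting with every value of $F$ and $F^c$; hence
\begin{equation*}
F\,(F^cF)^{-1}F^c=(F^cF)^{-1}FF^c=(F^cF)^{-1}F^cF=1,
\end{equation*}
where the middle equality is the identity of the previous step, and likewise $(F^cF)^{-1}F^c\,F=(F^cF)^{-1}(F^cF)=1$. Applying $\mathcal{I}$ and the definition of the slice product yields $f*f^{-*}=f^{-*}*f=\mathcal{I}(1)=1$. The only real obstacle is the centrality used in the last display: it rests entirely on $F^cF$ being scalar-valued, which in turn is exactly the slice-preserving character of $f^s$; once that is in hand the computation is forced.
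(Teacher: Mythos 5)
Your proof is correct and complete: the key points (that $\mathcal{Z}(f)=\emptyset$ forces $\mathcal{Z}(f^s)=\emptyset$, that $F^cF$ is $\mathbb{C}$-valued hence central and holomorphic, and that $F^cF=FF^c$) are all verified accurately. The paper does not prove this proposition itself (it cites \cite{genstostru, altavilla}), but the argument it sketches just before the statement --- regularity via the pointwise identity $f^{-*}=(f^{s})^{-1}f^{c}$ with $f^{s}$ slice preserving and nowhere vanishing --- is exactly the slice-function counterpart of your stem-level computation, so you are following essentially the same route.
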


Now, for the general theory of semiregular functions, we refer to \cite{genstostru,ghilperstosing}. Here 
we state the main features needed to our work.
Let $q$ be any quaternion. For any sequence $\{a_{n}\}_{n\in\mathbb{Z}}\subset\mathbb{H}$, the series,
\begin{equation*}
\sum_{n\in\mathbb{Z}}(x-q )^{*n}a_{n},
\end{equation*}
is called the \textit{Laurent series} centered at $q $ associated with $\{a_{n}\}_{n\in\mathbb{Z}}$. In the particular case in which $a_{n}=0$ for any $n<0$, then the previous series is called the \textit{power series} centered at $q$ associated with $\{a_{n}\}_{n\in\mathbb{Z}}$.

Let now $q \in\mathbb{C}_{J}\subset\mathbb{H}$. For any $R,R_{1},R_{2}\in[0,+\infty]$ such that $R_{1}<R_{2}$ we set,
\begin{equation*}
D_{J}(q ,R):=\{z\in\mathbb{C}_{J}\,|\,|z-q |<R\},\quad
A_{J}(q ,R_{1},R_{2}):=\{z\in\mathbb{C}_{J}\,|\, R_{1}<|z-q |<R_{2}\}.
\end{equation*}
Starting from the previous sets we define the following circular one,
\begin{equation*}
\Omega(q ,R):=\bigcup_{J\in\mathbb{S}}D_{J}(q ,R)\cap D_{J}(q ^{c},R),\quad
\Omega(q ,R_{1},R_{2}):=\bigcup_{J\in\mathbb{S}}A_{J}(q ,R_{1},R_{2})\cap A_{J}(q ^{c},R_{1},R_{2}),
\end{equation*}
and set the following notation:
\begin{equation*}
\Sigma(q,R):=\Omega(q,R)\cup D_{J}(q,R),\quad
\Sigma(q,R_{1},R_{2}):=\Omega(q,R_{1},R_{2})\cup A_{J}(q,R_{1},R_{2}).
\end{equation*}

\begin{theorem}[Theorem 4.9, \cite{ghilperstosing}]
Let $q\in\mathbb{H}$, $f\in\mathcal{SR}(\Omega_{D})$ and $0\leq R_{1}\leq R_{2}\leq\infty$ be such that $\Sigma(q ,R_{1},R_{2})\subset\Omega_{D}$. There 
exists a unique sequence $\{a_{n}\}_{n\in\mathbb{Z}}\subset\mathbb{H}$,
such that
\begin{equation}\label{laurentexp}
f(x)=\sum_{n\in\mathbb{Z}}(x-q )^{*n}a_{n},\quad\forall x\in \Sigma(q,R_{1},R_{2}).
\end{equation}
If $\Sigma(q ,R_{2})\subseteq\Omega_{D}$, then for any $n<0$, $a_{n}=0$ and
equation \eqref{laurentexp} holds for any $x\in \Sigma(q ,R_{2})$.
\end{theorem}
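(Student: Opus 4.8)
The plan is to reduce to one complex variable by restricting $f$ to a slice through $q$, to read off the coefficients from a classical Laurent expansion there, and then to reconstruct the $*$-Laurent series globally by means of regularity. Fix $J\in\mathbb{S}$ with $q\in\mathbb{C}_J$ and identify $\mathbb{C}_J\cong\mathbb{C}$ via $J\leftrightarrow i$. The first point I would record is that on this slice the $*$-powers degenerate to ordinary powers: for $x\in\mathbb{C}_J$ one has $x-q\in\mathbb{C}_J$, so $(x-q)^{-1}x(x-q)=x$ because $\mathbb{C}_J$ is commutative, and hence by Proposition \ref{prodcomp} $(x-q)^{*n}|_{\mathbb{C}_J}=(x-q)^{n}$ for every $n\in\mathbb{Z}$. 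Consequently any candidate $*$-Laurent series, once restricted to $\mathbb{C}_J$, is just an ordinary complex Laurent series centered at $q$ with quaternionic coefficients on the right.

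I would then extract the coefficients. Since $\Sigma(q,R_1,R_2)\supseteq A_J(q,R_1,R_2)$, the restriction $f|_{\mathbb{C}_J}$ is defined and regular on the annulus $A_J(q,R_1,R_2)$. Decomposing $\mathbb{H}=\mathbb{C}_J\oplus\mathbb{C}_JJ'$ for a unit $J'\perp J$, the Splitting Lemma (see \cite{genstostru}) makes the two $\mathbb{C}_J$-components $f_1,f_2\colon A_J(q,R_1,R_2)\to\mathbb{C}_J$ of $f|_{\mathbb{C}_J}=f_1+f_2J'$ holomorphic in the classical sense. Each admits a unique convergent Laurent expansion $f_\ell(x)=\sum_{n\in\mathbb{Z}}(x-q)^{n}c_{\ell,n}$, and putting $a_n:=c_{1,n}+c_{2,n}J'\in\mathbb{H}$ gives $f(x)=\sum_{n\in\mathbb{Z}}(x-q)^{n}a_n$ on $A_J(q,R_1,R_2)$. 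This already produces the sequence $\{a_n\}\subset\mathbb{H}$, and its uniqueness is inherited from the uniqueness of the classical Laurent coefficients of $f_1$ and $f_2$.

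With these coefficients I would form the $*$-Laurent series $g(x):=\sum_{n\in\mathbb{Z}}(x-q)^{*n}a_n$ and verify that it converges to a regular function exactly on $\Sigma(q,R_1,R_2)$. This convergence analysis off the slice is the step I expect to be the main obstacle. Passing to stem functions, $x-q$ is induced by the holomorphic stem function $z\mapsto w-q$, where $w=\alpha+\imath\beta\in\mathbb{H}_{\mathbb{C}}$ is the central image of $z=\alpha+i\beta$, and $(x-q)^{*n}$ by the $\mathbb{H}_{\mathbb{C}}$-power $(w-q)^{n}$. A direct computation gives
\[
(w-q)(w-q^{c})=w^{2}-2Re(q)\,w+|q|^{2}=(z-z_q)(z-\overline{z_q}),
\]
where $z_q=\alpha_q+i\beta_q$ is the complex number associated to $q=\alpha_q+J\beta_q$ and $\overline{z_q}$ to $q^{c}$. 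Hence $w-q$ is invertible in $\mathbb{H}_{\mathbb{C}}$ away from the two points $z=z_q,\overline{z_q}$, and it satisfies over $\mathbb{C}$ the quadratic with roots $z-z_q$ and $z-\overline{z_q}$; therefore the norms $\|(w-q)^{n}\|$ are controlled simultaneously by $|z-z_q|$ and $|z-\overline{z_q}|$. Combined with the classical growth of the $a_n$ (governed by $R_1<|x-q|<R_2$), this shows that the nonnegative part of $g$ converges where $|z-z_q|,|z-\overline{z_q}|<R_2$ and the negative part where $|z-z_q|,|z-\overline{z_q}|>R_1$, i.e. precisely on $\Omega(q,R_1,R_2)$; on the distinguished slice the condition reduces to $R_1<|x-q|<R_2$ alone. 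The union of these two convergence sets is exactly $\Sigma(q,R_1,R_2)$, and local uniform convergence guarantees that $g$ is regular there.

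Finally I would identify $g$ with $f$. By the first observation $g|_{\mathbb{C}_J}=\sum_n(x-q)^{n}a_n=f|_{\mathbb{C}_J}$ on $A_J(q,R_1,R_2)$; in particular $f-g$ is regular on the circular set $\Omega(q,R_1,R_2)$ and vanishes on $\Omega(q,R_1,R_2)\cap\mathbb{C}_J$, a set with accumulation points, so the Identity Principle (see \cite{genstostru}) forces $f\equiv g$ on $\Omega(q,R_1,R_2)$, while equality on the remaining slice part is immediate. This proves existence of \eqref{laurentexp} with uniqueness as above. For the last assertion, if $\Sigma(q,R_2)\subseteq\Omega_D$ then $f$ is regular on the full disk $D_J(q,R_2)$, so $f_1,f_2$ are holomorphic across $q$ and their classical Laurent expansions have no negative part; hence $a_n=0$ for all $n<0$ and the resulting $*$-power series converges on $\Sigma(q,R_2)$.
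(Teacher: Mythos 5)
First, a point of comparison: the paper does not prove this statement at all --- it is quoted verbatim as Theorem 4.9 of \cite{ghilperstosing} and used as a black box, so there is no in-paper proof to measure you against. Your strategy (restrict to the distinguished slice $\mathbb{C}_J$ through $q$, observe that $*$-powers of $x-q$ become ordinary powers there, split $f|_{\mathbb{C}_J}$ into two holomorphic components, read off classical Laurent coefficients, then propagate the expansion off the slice via stem functions and the Identity Principle) is the natural one and is essentially the route taken in the cited source; the reduction on the slice, the uniqueness argument, and the final assertion about $\Sigma(q,R_2)$ are all sound.

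The one step that is genuinely incomplete is the convergence of $g(x)=\sum_{n}(x-q)^{*n}a_n$ on $\Omega(q,R_1,R_2)$, which you yourself flag as the main obstacle but then dispatch with the assertion that ``the norms $\|(w-q)^{n}\|$ are controlled simultaneously by $|z-z_q|$ and $|z-\overline{z_q}|$.'' This does not follow from the identity $(w-q)(w-q^{c})=(z-z_q)(z-\overline{z_q})$ alone: $\mathbb{H}_{\mathbb{C}}$ has zero divisors, the norm is not multiplicative, and knowing the ``norm form'' of $w-q$ does not directly bound $\|(w-q)^{n}\|$. To close the gap, use that $w-q$ satisfies the quadratic relation $t^{2}-\bigl((z-z_q)+(z-\overline{z_q})\bigr)t+(z-z_q)(z-\overline{z_q})=0$ with coefficients in the central subalgebra $\mathbb{R}+\imath\mathbb{R}\cong\mathbb{C}$, so that $(w-q)^{n}=A_n(w-q)+B_n$ where $A_n,B_n$ are the divided differences of $t^{n}$ at the two roots; this yields $\|(w-q)^{n}\|\le C(z)\,n\,\max\bigl(|z-z_q|,|z-\overline{z_q}|\bigr)^{n}$ (and a matching lower bound for the negative powers), which is what the root test actually needs, and the degenerate case $q\in\mathbb{R}$, where $z_q=\overline{z_q}$ and the divided-difference constant blows up, must be treated separately (there $(x-q)^{*n}=(x-q)^{n}$ is slice preserving and the estimate is immediate). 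Two smaller points: since $\Omega(q,R_1,R_2)$ need not meet $\mathbb{R}$, the Identity Principle you invoke must be the two-semislice version, which does apply because your vanishing set $\Omega(q,R_1,R_2)\cap\mathbb{C}_J$ accumulates in both $\mathbb{C}_J^{+}$ and $\mathbb{C}_{-J}^{+}$; and for $n<0$ the identity $(x-q)^{*n}|_{\mathbb{C}_J}=(x-q)^{n}$ via Proposition \ref{prodcomp} requires a word about the removable singularity of $(x-q)^{-*}$ at $q^{c}$, which may lie inside $A_J(q,R_1,R_2)$.
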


We can now state the definition of \textit{pole} and of \textit{semiregularity}.

\begin{definition}
Let $f:\Omega_{D}\rightarrow\mathbb{H}$ be a regular function. A point $q \in\mathbb{H}$
is a \textit{singularity} for $f$ if there exists $R>0$ such that $\Omega_{D}$ contains $\Sigma(q ,0,R)$ and so that the Laurent expansion of $f$ at $q $, 
$f(x)=\sum_{n\in\mathbb{Z}}(x-q )^{* n}a_{n}$, converges in $\Sigma(q ,0,R)$.

Let $q $ be a singularity for $f$. We say that $q $ is a \textit{removable singularity}
if $f$ extends to a neighborhood of $q $ as a regular function. Otherwise consider
the expansion in equation \eqref{laurentexp}: we say that $q $ is a \textit{pole} for $f$ if
there exists $m\leq 0$ such that $a_{-k}=0$ for any $k>m$. The minimum of
such $m$ is called \textit{order of the pole} and denoted by $ord_{f}(q )$. If $q $ is
not a pole, then we call it an \textit{essential singularity} for $f$ and set $ord_{f}(q )=+\infty$.

A function $f:\Omega_{D}\rightarrow\mathbb{H}$ is said to be \textit{semiregular} if it is regular
in some set $\Omega_{D'}\subset\Omega_{D}$ such that every point in 
$\mathcal{P}=\Omega_{D}\setminus\Omega_{D'}$ is a pole or a removable
singularity for $f$.

If a function $f$ is semiregular, then the set of its poles will be denoted by $\mathcal{P}(f)$.
\end{definition}

For more convenience we denote by $\hat{\mathbb{H}}:=\mathbb{H}\cup\{\infty\}$.
In the next pages if a semiregular function $f$ admits a pole at $p$, then we
will write $f(p)=\infty$.

\begin{remark}\label{topsing}
We state here a couple of claims on the topology of $\mathcal{P}(f)$ for any semiregular function $f$ (see \cite{genstostru} and~\cite[Theorem 9.4]{ghilperstosing}).
\begin{itemize}
\item For any imaginary unit $I\in\mathbb{S}$, the set $\mathcal{P}_{I}=\mathcal{P}(f)\cap\mathbb{C}_{I}$ is discrete. 
\item If $f$ is semiregular in $\Omega_{D}$, then $\mathcal{P}(f)$ consists of 
isolated real points and of isolated $2$-spheres of type $\mathbb{S}_{p}$.
\end{itemize}
\end{remark}
%

And now a summary of results about the possibility to represent a semiregular function as the product of some factors.

\begin{proposition}[\cite{genstostru}]\label{semireg}
Let $f,g:\Omega_{D}\rightarrow\mathbb{H}$ be regular functions and consider the
quotient
\begin{equation*}
f^{-*}* g:\Omega_{D}\setminus \mathcal{Z}(f^{s})\rightarrow\mathbb{H}.
\end{equation*}
Each $q \in \mathcal{Z}(f^{s})$ is a pole of order $ord_{f^{-*}* g}(q )\leq m_{f^{s}}(q )$ 
for $f^{-*}* g$. As a consequence the function $f^{-*}* g$ is semiregular on $\Omega_{D}$.
Moreover, for any $x\in\Omega_{D}\setminus \mathcal{Z}(f^{s})$ it holds,
\begin{equation*}
(f^{-*}* g)(x)=f(f^{c}(x)^{-1}xf^{c}(x))^{-1}g(f^{c}(x)^{-1}xf^{c}(x))=f(T_{f^{c}}(x))^{-1}g(T_{f^{c}}(x)).
\end{equation*}
\end{proposition}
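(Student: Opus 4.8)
The plan is to establish the three assertions in the order: first the explicit formula, then the bound on the pole order, and finally semiregularity as an immediate consequence.

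\emph{The explicit formula.} I would apply Proposition~\ref{prodcomp} to the slice product $f^{-*}*g$, regarding $f^{-*}$ as the first factor; since $f^{-*}$ is regular on $\Omega_{D}\setminus\mathcal{Z}(f^{s})$, this gives, for $x$ where $f^{-*}(x)\neq 0$,
\[
(f^{-*}*g)(x)=f^{-*}(x)\,g\big(f^{-*}(x)^{-1}xf^{-*}(x)\big).
\]
The key observation is that $f^{s}$ is slice preserving, so $f^{s}(x)$ lies in the slice $\mathbb{C}_{I}$ through $x=\alpha+I\beta$ and hence commutes with $x$. Writing $f^{-*}(x)=(f^{s}(x))^{-1}f^{c}(x)$, the conjugation by $f^{-*}(x)$ therefore collapses to conjugation by $f^{c}(x)$, i.e.\ $f^{-*}(x)^{-1}xf^{-*}(x)=f^{c}(x)^{-1}xf^{c}(x)=T_{f^{c}}(x)$. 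To identify the prefactor I would apply Proposition~\ref{prodcomp} a second time, now to $f^{s}=f^{c}*f$, obtaining $f^{s}(x)=f^{c}(x)\,f(T_{f^{c}}(x))$; substituting this into $f^{-*}(x)=(f^{s}(x))^{-1}f^{c}(x)$ and cancelling $f^{c}(x)$ on the right yields $f^{-*}(x)=f(T_{f^{c}}(x))^{-1}$ (note $f(T_{f^{c}}(x))\neq 0$ precisely because $f^{s}(x)\neq 0$). Combining the two computations gives exactly the stated identity.

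\emph{The pole estimate.} Here I would use that $f^{s}$ is slice preserving, so that its reciprocal is the pointwise reciprocal and slice multiplication is associative with the regular product $f^{c}*g$, giving
\[
f^{-*}*g=(f^{s})^{-1}*(f^{c}*g).
\]
Fix $q\in\mathcal{Z}(f^{s})$ and set $m=m_{f^{s}}(q)$. Since $f^{s}$ is slice preserving, I would factor $f^{s}=((x-q)^{s})^{k}\,h$ with $h$ slice preserving, regular, and non-vanishing on $\mathbb{S}_{q}$; because $(f^{s})^{s}=(f^{s})^{2}$ one has $m=2k$. Using $(x-q)^{s}=(x-q)*(x-q^{c})$ and that each degree-one factor has a simple $*$-reciprocal, $((x-q)^{s})^{-k}$ has a pole of order $2k=m$ in the $*$-Laurent variable at $q$, while $h^{-1}*(f^{c}*g)$ is regular near $\mathbb{S}_{q}$. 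Hence $f^{-*}*g=((x-q)^{s})^{-k}*(\text{regular})$ near $q$, and multiplication by a regular function can only cancel negative-order Laurent terms, never create them; this gives $\mathrm{ord}_{f^{-*}*g}(q)\leq m_{f^{s}}(q)$.

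\emph{Semiregularity.} This follows at once: off $\mathcal{Z}(f^{s})$ the function $f^{-*}*g$ is a slice product of regular functions, hence regular, while every $q\in\mathcal{Z}(f^{s})$ is, by the previous step, a pole of finite order (or a removable singularity); thus $f^{-*}*g$ is semiregular on $\Omega_{D}$. The main obstacle I anticipate is the bookkeeping in the pole estimate: keeping the quaternionic notion of pole order (defined through the $*$-Laurent expansion) consistent with the factor of two arising from $(f^{s})^{s}=(f^{s})^{2}$, and checking carefully that multiplying by a regular function does not raise the order. By contrast, the commutation step underlying the explicit formula is short once one recalls that a slice preserving function takes its values in the slice of its argument.
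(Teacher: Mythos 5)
The paper offers no proof of this proposition: it is quoted verbatim from \cite{genstostru}, so there is nothing internal to compare your argument against; I therefore assess it on its own terms. Your treatment of the explicit formula is correct and is the standard argument: two applications of Proposition~\ref{prodcomp} (to $f^{-*}*g$ and to $f^{s}=f^{c}*f$) together with the observation that $f^{s}(x)$, being a value of a slice preserving function, lies in $\mathbb{C}_{I}$ with $x$ and hence commutes with it. The only thing you should say explicitly is that $f^{c}$ does not vanish on $\Omega_{D}\setminus\mathcal{Z}(f^{s})$ (if $f^{c}(x)=0$ then $\mathbb{S}_{x}\subset\mathcal{Z}(f^{s})$), since both applications of Proposition~\ref{prodcomp} need their first factor to be nonzero at the point in question. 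The reduction $f^{-*}*g=(f^{s})^{-1}(f^{c}*g)$ and the semiregularity conclusion are also fine.

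There is, however, one incorrect intermediate claim in the pole estimate: $((x-q)^{s})^{-k}$ does \emph{not} have a pole of order $2k$ at $q$; it has a pole of order $k$ at every point of $\mathbb{S}_{q}$. The $2k$ ``simple $*$-reciprocals'' do not stack their poles at the same point of the slice: restricted to $\mathbb{C}_{I}$ with $q\in\mathbb{C}_{I}$, $(x-q)^{-*}$ becomes $(z-q)^{-1}$ (regular at $q^{c}$) while $(x-q^{c})^{-*}$ becomes $(z-q^{c})^{-1}$ (regular at $q$), so $((x-q)^{s})^{-1}$ restricts to $(z-q)^{-1}(z-q^{c})^{-1}$, with a \emph{simple} pole at each of $q$ and $q^{c}$; this is also forced by Lemma~\ref{ordpoleslpr} together with Theorem~\ref{thmdecompreg}. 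Your error is harmless for the stated inequality --- the true order $k$ is of course $\leq 2k=m_{f^{s}}(q)$ under the paper's definition of total multiplicity, and multiplication by the regular factor $h^{-1}(f^{c}*g)$ cannot raise the order --- so the conclusion $\mathrm{ord}_{f^{-*}*g}(q)\leq m_{f^{s}}(q)$ survives, indeed with room to spare. But you should correct the assertion of exact order $2k$: confusing the pole order at a point with the spherical order (which is $2k$ here) is precisely the bookkeeping trap you flagged, and you fell into it.
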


Compare the last equation with the one in Proposition \ref{prodcomp}.
Observe that for any regular function $f:\Omega_{D}\rightarrow \mathbb{H}$ and for any $x\in\Omega_{D}\setminus \mathcal{Z}(f^{s})$, it holds $|x|=|f^{c}(x)^{-1}xf^{c}(x)|=|f(x)^{-1}xf(x)|$.


Given $f=\mathcal{I}(F)$ and $g=\mathcal{I}(G)\in\mathcal{S}(\Omega_{D})$, with $g$ slice preserving, then the slice function $h:\Omega_{D}\setminus \mathcal{Z}(g)\rightarrow \mathbb{H}$, 
defined by $h=\mathcal{I}(G^{-1}F)$ is such that $h(x)=\frac{1}{g(x)}f(x)$
and, of course, $h\in\mathcal{SR}(\Omega_{D}\setminus \mathcal{Z}(g))$.

Conversely with respect to the previous proposition, as we will see in the next results, all semiregular functions can be locally expressed as quotients of regular functions. Moreover, if, in the previous statement, $g\equiv 1$, then,
for any semiregular function $f$, its slice inverse $f^{-*}$ is semiregular as well.


\begin{theorem}[\cite{genstostru}]\label{thmdecompreg}
Let $\Omega_{D}$ be a slice domain and $f:\Omega_{D}\rightarrow\hat{\mathbb{H}}$ be a semiregular function. Choose $q =\alpha+I\beta\in\Omega_{D}$, 
set $m=ord_{f}(q )$ and $n=ord_{f}(q ^{c})$ and, without loss of generality suppose 
$m\leq n$. Then, there exist a neighborhood $\Omega_{U}\subset \Omega_{D}$
and a unique regular function $g:\Omega_{U}\rightarrow \mathbb{H}$, such that
\begin{equation*}
f(x)={((x-q )^{s})}^{-n}(x-q )^{*(n-m)}* g(x)
\end{equation*}
in $\Omega_{U}\setminus\mathbb{S}_{q }$. Moreover, if $n>0$ then neither $g(q )$,
nor $g(q ^{c})$ vanishes.

Furthermore, in the same hypotheses, there exists a unique semiregular function $h:\Omega_{D}\rightarrow \hat{\mathbb{H}}$ without poles in $\mathbb{S}_{q }$, such that
\begin{equation*}
f(x)={((x-q )^{s})}^{-n}(x-q )^{*(n-m)}* h(x).
\end{equation*}
In the case in which $n>0$ then neither $h(q )$,
nor $h(q ^{c})$ vanishes.

\end{theorem}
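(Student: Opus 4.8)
The plan is to clear all the poles lying on the sphere $\mathbb{S}_q$ at once by multiplying $f$ by a power of the \emph{slice preserving} characteristic polynomial $(x-q)^s$, and then to recognize the residual factor. Concretely, set $\phi:=((x-q)^s)^{n}* f$. Since $(x-q)^s$ is slice preserving this slice product is the pointwise product and $(x-q)^s$ commutes with every factor; moreover, restricting to the slice $\mathbb{C}_I$ through $q$ (where both $q$ and $q^c$ lie and the algebra is commutative) one has $((x-q)^s)^n|_{\mathbb{C}_I}=(x-q)^n(x-q^c)^n$, an honest complex polynomial with a zero of order $n$ at each of $q$ and $q^c$. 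On $\mathbb{C}_I$ the slice Laurent series of the Laurent expansion theorem (Theorem~4.9 of \cite{ghilperstosing}) becomes the ordinary complex Laurent series, so $m=\mathrm{ord}_f(q)$ and $n=\mathrm{ord}_f(q^c)$ coincide with the complex pole orders of $f|_{\mathbb{C}_I}$ at $q$ and at $q^c$. Counting orders on $\mathbb{C}_I$ then gives that $\phi|_{\mathbb{C}_I}$ has a zero of order $n-m\ge 0$ at $q$, a zero of order $n-n=0$ at $q^c$, and no poles near $\mathbb{S}_q$.

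First I would use Remark~\ref{topsing} to pick a circular neighbourhood $\Omega_U$ of $\mathbb{S}_q$ in which the only possible poles of $f$ lie on $\mathbb{S}_q$. The order count above shows $\phi|_{\mathbb{C}_I}$ has removable singularities at $q,q^c$; since $\phi$ is a slice function, by the Representation Theorem its stem function is holomorphic near the point of $D$ encoding $\mathbb{S}_q$ as soon as its restriction to one slice is, so $\phi$ extends regularly across the whole sphere, i.e. $\phi\in\mathcal{SR}(\Omega_U)$, with an $\mathbb{S}$-isolated zero of multiplicity $n-m$ at $q$ and $\phi(q^c)\neq0$ when $n>0$. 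Next I would factor out this isolated zero. By the factorization recalled before the statement (Proposition~3.17 of \cite{genstostru} / Corollary~23 of \cite{ghiloniperotti}) a regular function vanishing at $q$ is left divisible by $(x-q)$; iterating, and using that $\mathcal{Z}((x-q)^{*(n-m)})=\{q\}$ with multiplicity $n-m$ together with the fact that the zero of $\phi$ at $q$ is isolated (so $\mathbb{S}_q\not\subset\mathcal{Z}(\phi)$), one obtains $g\in\mathcal{SR}(\Omega_U)$ with
\begin{equation*}
\phi=(x-q)^{*(n-m)}* g .
\end{equation*}
Rearranging gives $f=((x-q)^s)^{-n}(x-q)^{*(n-m)}* g$ on $\Omega_U\setminus\mathbb{S}_q$, which is the first formula.

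For the nonvanishing when $n>0$ I would apply Proposition~\ref{prodcomp} on $\mathbb{C}_I$: since $q,q^c\in\mathbb{C}_I$ and $\mathbb{C}_I$ is commutative, $(x-q)^{*(n-m)}(q^c)=(q^c-q)^{n-m}$ and $T_{(x-q)^{*(n-m)}}(q^c)=(q^c-q)^{-(n-m)}q^c(q^c-q)^{n-m}=q^c$, whence
\begin{equation*}
\phi(q^c)=(q^c-q)^{n-m}\,g(q^c);
\end{equation*}
as $\phi(q^c)\neq0$ this forces $g(q^c)\neq0$, while $g(q)\neq0$ is exactly the statement that the full multiplicity at $q$ was extracted (when $n-m=0$ one simply takes $g=\phi$). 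Uniqueness of $g$ follows because $(x-q)^{*(n-m)}$ is not a zero divisor among regular functions, combined with the Identity Principle. For the final part I would set $h:=(x-q)^{*-(n-m)}*((x-q)^s)^{n}* f$ on all of $\Omega_D\setminus\mathbb{S}_q$; by Proposition~\ref{semireg} this is semiregular, it agrees with the regular $g$ on $\Omega_U$ (hence has no poles on $\mathbb{S}_q$ and inherits the same nonvanishing), while keeping the remaining poles of $f$ away from $\mathbb{S}_q$; uniqueness is argued as before.

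The main obstacle is the asymmetry between $q$ and $q^c$: a single slice preserving factor $((x-q)^s)^{n}$ necessarily removes a zero of the \emph{same} order $n$ at both points, so it exactly clears the order-$n$ pole at $q^c$ but over-clears the order-$m$ pole at $q$. The real content is to show that the leftover is a genuine $\mathbb{S}$-isolated zero of order precisely $n-m$ \emph{localised at $q$} rather than spread over $\mathbb{S}_q$, so that it can be written as $(x-q)^{*(n-m)}$ (and not, say, a spherical factor). Pinning down that the residual zero sits exactly at $q$ with the correct multiplicity, and that the factorization can be iterated while keeping the zero at $q$, is where care is required; restricting everything to the commutative slice $\mathbb{C}_I$ through $q$ is the device that turns all of these order counts into elementary complex analysis and makes the argument rigorous.
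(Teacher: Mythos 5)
The paper does not prove this theorem: it is imported verbatim from \cite{genstostru} (it is Theorem~9.9 there), so there is no internal proof to compare against. Your reconstruction follows the standard route and, for $q\notin\mathbb{R}$, is essentially sound: multiplying by the slice preserving factor $((x-q)^{s})^{n}$ clears the poles on $\mathbb{S}_{q}$; the order count on the commutative slice $\mathbb{C}_{I}$ correctly locates a residual zero of order exactly $n-m$ at $q$ and none at $q^{c}$; the passage through the stem function extends $\phi$ across the whole sphere; the iterated left division by $(x-q)$ (using $T_{x-q}(z)=z$ on $\mathbb{C}_{I}$ so that the slice order drops by one at each step) produces $g$; and the identity $\phi(q^{c})=(q^{c}-q)^{n-m}g(q^{c})$ from Proposition~\ref{prodcomp} gives the nonvanishing.

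Two points need tightening. First, your uniqueness argument (``not a zero divisor, plus the Identity Principle'') implicitly uses the implication $(g-g')^{s}\equiv 0\Rightarrow g-g'\equiv 0$, which Theorem~\ref{sliceidentity} guarantees only when the domain meets $\mathbb{R}$; the circular neighbourhood $\Omega_{U}$ of $\mathbb{S}_{q}$ need not. This is easily repaired: off $\mathbb{S}_{q}$ the function $g$ is forced to equal $(x-q)^{*-(n-m)}*((x-q)^{s})^{n}*f$, and continuity then determines it on the sphere. Second, your slice bookkeeping silently assumes $q\neq q^{c}$: for real $q$ one has $((x-q)^{s})^{n}=(x-q)^{2n}$, so $\phi|_{\mathbb{C}_{I}}$ acquires a zero of order $2n-n=n$ at $q$ rather than $n-m=0$, and the factor $(q^{c}-q)^{n-m}$ degenerates. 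As written, your argument therefore covers only $\beta\neq 0$; the real case must be treated separately (and the nonvanishing claim for $g$ does not follow from your computation there).
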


In general, given a slice semiregular function $f$, in each sphere contained in
its domain all the poles have the same order
with the possible exception of one, which may have less order. We 
will see that this is not possible in the case of slice preserving semiregular functions.


\begin{theorem}[\cite{genstostru}]
Let $\Omega_{D}$ be a slice domain and $f:\Omega_{D}\rightarrow\hat{\mathbb{H}}$ be a semiregular function. Suppose $f\not\equiv 0$ and let 
$\mathbb{S}_{q }\subset\Omega_{D}$. There exist $m\in\mathbb{Z}$, 
$n\in\mathbb{N}$, $q_{1},\dots,q_{n}\in\mathbb{S}_{q }$, with $q_{i}\neq\overline q_{i+1}$ for all $i\in\{1,\dots, n\}$ such that
\begin{equation}\label{polesdec}
f(x)={((x-q )^{s})}^{m}(x-q_{1})*(x-q_{2})*\dots*(x-q_{n})* g(x),
\end{equation}
for some semiregular function $g:\Omega_{D}\rightarrow \hat{\mathbb{H}}$ which does
not have neither poles nor zeros in $\mathbb{S}_{q }$.
\end{theorem}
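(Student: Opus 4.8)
The plan is to isolate the behaviour of $f$ on $\mathbb{S}_q$ in two stages: first strip away the part of the zero/pole divisor common to the whole sphere by a central power of the characteristic polynomial, and then peel off the finitely many residual point-factors one at a time, keeping track of which exponent choice makes the final list of points \emph{reduced}.

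First I would fix the spherical order. By Remark \ref{topsing} and the paragraph preceding the statement, on $\mathbb{S}_q$ all poles of $f$ (resp.\ all zeros, if $\mathbb{S}_q$ is a zero-sphere) share a common order, with the possible exception of a single point of smaller order. Let $m\in\mathbb{Z}$ be the integer matching this common order and set $\tilde f:=((x-q)^s)^{-m}*f$. Since $(x-q)^s$ is slice preserving it is central for $*$, so the product is unambiguous; moreover $(x-q)^s$ has a simple zero at every point of $\mathbb{S}_q$ and $\mathcal{Z}((x-q)^s)=\mathbb{S}_q$. Hence multiplication by $((x-q)^s)^{-m}$ cancels exactly the common order: $\tilde f$ is regular in a neighbourhood of $\mathbb{S}_q$, the generic points of $\mathbb{S}_q$ are neither zeros nor poles of $\tilde f$ (so $\tilde f$ has no spherical zero on $\mathbb{S}_q$), and the exceptional point, if any, survives merely as an isolated zero. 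Equivalently, one can read off $m$ and this reduction directly from Theorem \ref{thmdecompreg} applied at $q$.

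Next I would extract the residual point-zeros. If $\tilde f$ has no zero on $\mathbb{S}_q$, take $g:=\tilde f$ and $n=0$. Otherwise, a regular function meets each sphere in the empty set, in a single point, or in the whole sphere; the last case being excluded, $\tilde f$ vanishes at exactly one $q_1\in\mathbb{S}_q$. By the factorization of a regular function at a zero (Proposition 3.17 of \cite{genstostru}, Corollary 23 of \cite{ghiloniperotti}) I write $\tilde f=(x-q_1)*F_1$ with $F_1$ regular near $\mathbb{S}_q$, and iterate on $F_1$. This yields $q_1,\dots,q_n\in\mathbb{S}_q$ and a $g$ without zeros on $\mathbb{S}_q$ such that $\tilde f=(x-q_1)*\cdots*(x-q_n)*g$. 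The procedure terminates because the power of $(x-q)^s$ dividing $\tilde f^{s}$ is finite (it is finite since $f\not\equiv 0$ forces $\tilde f^{s}\not\equiv 0$ by Theorem \ref{sliceidentity}), and each extracted factor contributes one copy of $(x-q_i)^s=(x-q)^s$ to the symmetrization via $(f*g)^s=f^sg^s$, lowering that exponent by exactly one. Being regular and zero-free near $\mathbb{S}_q$, $g$ has there neither zeros nor poles; globally $g=(x-q_n)^{-*}*\cdots*(x-q_1)^{-*}*((x-q)^s)^{-m}*f$ is semiregular on $\Omega_D$, as semiregular functions are closed under $*$ and $-*$. Unwinding gives $f=((x-q)^s)^{m}(x-q_1)*\cdots*(x-q_n)*g$.

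Finally, the condition $q_i\neq\overline{q_{i+1}}$ is the delicate point, and it is exactly here that the maximality of $m$ is used. If some consecutive pair satisfied $q_{i+1}=\overline{q_i}=q_i^{c}$, then $(x-q_i)*(x-q_{i+1})=(x-q_i)^s=(x-q)^s$ would be a slice preserving, hence central, factor of $\tilde f$, forcing $\mathbb{S}_q=\mathcal{Z}((x-q)^s)\subset\mathcal{Z}(\tilde f)$, i.e.\ a spherical zero of $\tilde f$ on $\mathbb{S}_q$, contradicting the choice of $m$. The hard part is precisely this interplay: one must arrange the first stage so that the reduced function $\tilde f$ carries no residual whole-sphere zero, since it is only this maximality that both guarantees termination and rules out the cancellations $q_{i+1}=\overline{q_i}$. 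The case $q\in\mathbb{R}$ (where $\mathbb{S}_q=\{q\}$, $(x-q)^s=(x-q)^2$, at most one point-factor $q_1=q$ occurs, and the condition is vacuous) is handled separately and is immediate.
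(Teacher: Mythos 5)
The paper gives no proof of this statement (it is quoted from \cite{genstostru}), so there is no in-paper argument to compare against; your reconstruction follows the same standard route as that source. The argument is correct: Theorem \ref{thmdecompreg} (equivalently, maximality of the central power of $(x-q)^{s}$) normalizes the spherical order, the trichotomy for $\mathcal{Z}(\tilde f)\cap\mathbb{S}_{q}$ lets you peel off one linear factor at a time, and both termination and the reducedness condition $q_{i}\neq\overline{q_{i+1}}$ follow from that maximality exactly as you argue, with only the finiteness of the maximal power of $(x-q)^{s}$ dividing $\tilde f^{s}$ (a consequence of the identity principle on a slice) left implicit.
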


\begin{definition}
Let $f:\Omega_{D}\rightarrow\hat{\mathbb{H}}$ be a semiregular function and 
consider the factorization in equation \eqref{polesdec}. If $m\leq0$, then we say
that $f$ has \textit{spherical order} $-2m$ at $\mathbb{S}_{q}$ and write 
$ord_{f}(\mathbb{S}_{q})=-2m$ (even when $q\in\mathbb{R}$). Whenever $n>0$,
we say that $f$ has \textit{isolated multiplicity} $n$ at $q_{1}$.
\end{definition}


Now, after the summary of the known results we are going to specialize
to the case of slice preserving functions.
\begin{lemma}\label{ordpoleslpr}
Let $f:\Omega_{D}\rightarrow\hat{\mathbb{H}}$ be a slice preserving semiregular function on a symmetric slice domain, then if $q\in\Omega_{D}$ is a pole for  $f$, we have
that $ord_{f}(q')=ord_{f}(q)$, for all $q'\in\mathbb{S}_{q}$.
\end{lemma}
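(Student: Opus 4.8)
The plan is to reduce the quaternionic pole order to a classical complex one, exploiting that a slice preserving function is encoded, on every slice, by one and the same complex meromorphic function. Writing $f=\mathcal{I}(F_{1}+\imath F_{2})$, slice preservation forces $F_{1},F_{2}$ to be real valued, so the complex function $h(z):=F_{1}(z)+iF_{2}(z)$ is meromorphic and satisfies $h(\bar z)=\overline{h(z)}$, the latter following from the even--odd character $F_{1}(\bar z)=F_{1}(z)$, $F_{2}(\bar z)=-F_{2}(z)$ of the stem function. For any $J\in\mathbb{S}$, identifying $\mathbb{C}_{J}$ with $\mathbb{C}$ via $J\leftrightarrow i$ in both domain and codomain, the restriction $f|_{\mathbb{C}_{J}}$ becomes exactly $h$, independently of $J$, since $f(\alpha+J\beta)=F_{1}(\alpha+i\beta)+JF_{2}(\alpha+i\beta)$.

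First I would show that the quaternionic order $ord_{f}(q)$ agrees with the classical pole order of $h$ at the corresponding complex point. Choosing $J$ so that $q=\alpha+J\beta\in\mathbb{C}_{J}$, I restrict the $*$-Laurent expansion $f(x)=\sum_{n}(x-q)^{*n}a_{n}$ to $\mathbb{C}_{J}$. By Proposition~\ref{prodcomp}, for $x,q\in\mathbb{C}_{J}$ one has $f(x)^{-1}xf(x)=x$, so $(x-q)^{*n}|_{\mathbb{C}_{J}}=(x-q)^{n}$, and the expansion collapses to the ordinary Laurent series $\sum_{n}(x-q)^{n}a_{n}$ in the variable $x\in\mathbb{C}_{J}$. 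Since $f|_{\mathbb{C}_{J}}$ takes values in $\mathbb{C}_{J}$ and $(x-q)^{n}\in\mathbb{C}_{J}$, uniqueness of Laurent coefficients forces the component of each $a_{n}$ orthogonal to $\mathbb{C}_{J}$ to vanish, i.e. $a_{n}\in\mathbb{C}_{J}$. Hence $ord_{f}(q)=-\min\{n:a_{n}\neq0\}$ equals the order of the pole of $h$ at $z_{0}:=\alpha+i\beta$.

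Then I would run this identification at every point of the sphere. An arbitrary $q'\in\mathbb{S}_{q}$ can be written $q'=\alpha+J'\beta$ with the same $\alpha,\beta$ but a possibly different $J'\in\mathbb{S}$; by the previous step $ord_{f}(q')$ equals the pole order of $h$ at the same complex point $z_{0}=\alpha+i\beta$, which does not depend on $J'$. The conjugate point $(q')^{c}=\alpha-J'\beta$ corresponds to $\bar z_{0}$, and from $h(\bar z)=\overline{h(z)}$ the classical pole orders at $z_{0}$ and $\bar z_{0}$ coincide. Since every point of $\mathbb{S}_{q}$ has one of these two forms, $ord_{f}$ is constant on $\mathbb{S}_{q}$, which is the assertion.

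I expect the only delicate point to be the first step, namely the clean identification of the $*$-Laurent expansion with the ordinary one on $\mathbb{C}_{J}$ and the verification that the coefficients $a_{n}$ remain in $\mathbb{C}_{J}$; once this bridge between the $*$-order and the classical order is in place, the spherical symmetry of $h$ inherited from the reality of $F_{1},F_{2}$ makes the constancy on $\mathbb{S}_{q}$ immediate. An equivalent route, which I would keep as a backup, is to start from the factorization $f=((x-q)^{s})^{m}(x-q_{1})*\cdots*(x-q_{n})*g$ and use $f^{c}=f$ together with uniqueness of the decomposition to force $n=0$, thereby excluding the exceptional point of smaller order mentioned before the lemma; however the stem-function argument is more transparent and avoids the bookkeeping of the isolated factors.
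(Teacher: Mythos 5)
Your argument is correct, but it is not the route the paper takes. The paper proves the lemma by contradiction: assuming $n=ord_{f}(q^{c})>ord_{f}(q)=m$ (the reduction to $q'=q^{c}$ being harmless since at most one point of the sphere can have smaller order), it invokes Theorem \ref{thmdecompreg} to write $f(x)=((x-q)^{s})^{-n}(x-q)^{*(n-m)}*g(x)$ with $g(q),g(q^{c})\neq 0$, observes that $\tilde f:=((x-q)^{s})^{n}f$ is slice preserving, and derives a contradiction because $\tilde f$ would then have an $\mathbb{S}$-isolated non-real zero at $q$, which Remark \ref{structpres} forbids --- this is essentially the ``backup route'' you sketch at the end. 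Your main argument instead transfers the whole question to classical complex analysis: since $F_{1},F_{2}$ are real valued, the single meromorphic function $h=F_{1}+iF_{2}$ with $h(\bar z)=\overline{h(z)}$ encodes $f$ on every slice, and once you check that the $*$-Laurent expansion at $q\in\mathbb{C}_{J}$ restricts on $\mathbb{C}_{J}$ to the ordinary Laurent series of $h$ (with coefficients forced into $\mathbb{C}_{J}$ by uniqueness, including the negative powers, for which $(x-q)^{-*}|_{\mathbb{C}_{J}}=(x-q)^{-1}$ by commutativity of the slice), the constancy of the order over all of $\mathbb{S}_{q}$ is immediate from the slice-independence of $h$. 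What your approach buys is a uniform treatment of every point of the sphere at once (no case distinction or exceptional point to exclude), independence from the decomposition Theorem \ref{thmdecompreg}, and as a by-product the analogous constancy statement for the multiplicity of spherical zeros; what it costs is precisely the bridge you identify as delicate, namely the careful identification of the $*$-order with the classical order on a slice, which the paper's algebraic argument avoids entirely. Both proofs are valid; yours is longer to write out in full but arguably more illuminating about why slice preservation is the operative hypothesis.
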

\begin{proof}
Suppose, by contradiction, that there exists a point $q'\in\mathbb{S}_{q}$ such that 
$n=ord_{f}(q')>ord_{f}(q)=m>0$. Without loss of generality we can suppose that
$q'=q^{c}$, then, by Theorem \ref{thmdecompreg}, there exist a neighborhood 
$\Omega_{U}$ of $q$ contained in $\Omega_{D}$ and a unique regular function
$g:\Omega_{U}\rightarrow\mathbb{H}$ such that 
\begin{equation*}
f(x)={((x-q)^{s})}^{-n}(x-q)^{*(n-m)}* g(x)
\end{equation*}
and neither $g(q)$ nor $g(q^{c})$ vanishes.
Now, since $f$ and $(x-q)^{s}$ are slice preserving functions, then,
\begin{equation*}
\tilde f(x):=(x-q)^{*(n-m)}* g(x)={((x-q)^{s})}^{n}f(x)
\end{equation*}
is a slice preserving function. Since $m$ is strictly less than $n$, then $\tilde f(q)=0$ and
$\tilde f(q^{c})\neq 0$ but, since $\tilde f$ is a slice preserving function this is not possible 
and the only possibility is that $m=n$.
\end{proof}

Therefore, if $f\not\equiv 0$ is a slice preserving semiregular function and $\mathbb{S}_{q}\subset\Omega_{D}$ is a spherical pole for $f$, then there exists a negative integer $m$, such that
\begin{equation*}
f(x)={((x-q)^{s})}^{m}g(x),
\end{equation*}
for some slice preserving semiregular function $g:\Omega_{D}\rightarrow\mathbb{H}$
which does not have poles nor zeros in $\mathbb{S}_{q}$.

%

We now want to state an analogue of Corollary \ref{zerocpt} in the case of poles.

\begin{proposition}\label{polecpt}
Let $\Omega_{D}$ be a slice domain and let $f:\Omega_{D}\rightarrow\hat{\mathbb{H}}$ be a non-constant semiregular function.
Let $K\subset D$ be a compact set 
containing an accumulation point,
then, $\mathcal{P}(f)_{\mathbb{C}}\cap K$ is finite.
\end{proposition}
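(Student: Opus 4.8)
The plan is to deduce the finiteness from the topological description of $\mathcal{P}(f)$ recorded in Remark~\ref{topsing}, in the same spirit in which Corollary~\ref{zerocpt} deduced finiteness of the zeros from the Identity Principle. I would argue by contradiction: assume $\mathcal{P}(f)_{\mathbb{C}}\cap K$ is infinite. Since $K$ is compact, Bolzano--Weierstrass produces pairwise distinct points $w_{n}\in\mathcal{P}(f)_{\mathbb{C}}\cap K$ converging to some $w_{\infty}=\alpha_{\infty}+i\beta_{\infty}\in K\subseteq D$.

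The key step is to transfer this accumulation, which lives in $\mathbb{C}$, onto a single slice. Fix any imaginary unit $I\in\mathbb{S}$ and consider the isometry $\phi_{I}:\mathbb{C}\to\mathbb{C}_{I}$, $\alpha+i\beta\mapsto\alpha+I\beta$. By the second item of Remark~\ref{topsing}, every pole of $f$ is either an isolated real point or an isolated sphere $\mathbb{S}_{p}$, and in both cases the corresponding set meets $\mathbb{C}_{I}$: a real pole $r$ lies on every slice, while a pole sphere $\mathbb{S}_{\alpha+J\beta}$ meets $\mathbb{C}_{I}$ in $\alpha\pm I\beta$. Hence $\phi_{I}$ maps $\mathcal{P}(f)_{\mathbb{C}}$ into $\mathcal{P}_{I}=\mathcal{P}(f)\cap\mathbb{C}_{I}$. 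As $\phi_{I}$ is an injective homeomorphism with $\phi_{I}(D)\subseteq\Omega_{D}$, the points $\phi_{I}(w_{n})$ are pairwise distinct elements of $\mathcal{P}_{I}$ converging to $\phi_{I}(w_{\infty})\in\Omega_{D}\cap\mathbb{C}_{I}$.

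This contradicts the first item of Remark~\ref{topsing}, which asserts that $\mathcal{P}_{I}$ is discrete in $\Omega_{D}\cap\mathbb{C}_{I}$; here the hypothesis that $f$ be non-constant is exactly what excludes the degenerate case $f\equiv\infty$, for which every point is a pole and discreteness fails. This is the precise pole-analogue of the escape clause $f\equiv 0$ appearing in Corollary~\ref{zerocpt}. Therefore $\mathcal{P}(f)_{\mathbb{C}}\cap K$ must be finite.

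I expect the only delicate point to be the bookkeeping in the middle step: one must check that passing from the complex trace $\mathcal{P}(f)_{\mathbb{C}}$ to the slice set $\mathcal{P}_{I}$ neither loses nor creates accumulation, and in particular that the limit $\phi_{I}(w_{\infty})$ genuinely lies in the open set $\Omega_{D}\cap\mathbb{C}_{I}$ rather than on its boundary, which is guaranteed by $w_{\infty}\in K\subseteq D$. A conceptually equivalent route, even closer to the reciprocal trick used for Corollary~\ref{zerocpt}, is to pass to the symmetrization $f^{s}$, which is slice preserving and semiregular and whose pole spheres coincide with those of $f$: the restriction of $f^{s}$ to a slice $\mathbb{C}_{I}$ is a genuine complex meromorphic function, whose poles cannot accumulate at an interior point unless it is identically $\infty$, yielding the same contradiction.
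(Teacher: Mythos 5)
Your argument is correct, but it follows a different route from the paper's. The paper proves this proposition by reduction to the zero case: using Theorem~\ref{thmdecompreg} and the slice reciprocal, the pole spheres of $f$ are identified (sphere by sphere) with the zero spheres of an associated regular function, and then Corollary~\ref{zerocpt} -- already established via the Identity Principle -- delivers the finiteness. You instead argue directly: compactness of $K$ produces an accumulating sequence in $\mathcal{P}(f)_{\mathbb{C}}$, the circularity of the pole set (second item of Remark~\ref{topsing}) lets you push that sequence onto a single slice $\mathbb{C}_{I}$, and the discreteness of $\mathcal{P}_{I}$ gives the contradiction. Your route is shorter and does not need Corollary~\ref{zerocpt} at all; what the paper's route buys is that it never has to invoke the non-accumulation of poles as a separate input, since everything is funneled through the one place where the Identity Principle has already been applied. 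Your side remark about the symmetrization $f^{s}$ is in fact the closest in spirit to what the paper does.

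One point deserves more care than you give it: the first item of Remark~\ref{topsing} literally says only that $\mathcal{P}_{I}$ is \emph{discrete}, and a discrete subset can still accumulate at a point of the ambient domain not belonging to it (think of $\{1/n\}$ in $\mathbb{C}$), which is exactly the configuration your contradiction needs to exclude. What you really need is that $\mathcal{P}_{I}$ is \emph{closed and} discrete in $\Omega_{D}\cap\mathbb{C}_{I}$; this is true (if poles accumulated at a point of $\Omega_{D}$, that point could not admit the punctured neighborhood with convergent Laurent expansion required by the definition of singularity, nor lie in the regularity set $\Omega_{D'}$), and it is what Theorem 9.4 of the cited singularities paper provides, but you should say so explicitly rather than lean on the bare word ``discrete.'' Your gloss that non-constancy rules out $f\equiv\infty$ is harmless but not really the point: no semiregular function has a non-closed pole set, so the hypothesis plays no essential role in your version of the argument.
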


\begin{proof}
The proof is just a consequence of Remark \ref{topsing}, Theorem \ref{thmdecompreg} and Corollary \ref{zerocpt}. If in fact $f$ is semiregular on $\Omega_{D}$, then
$f^{-*}$ is semiregular in $\Omega_{D}$. Moreover, $\mathbb{S}_{x}\cap\mathcal{P}(f)\neq\emptyset$ if and only if $\mathbb{S}_{x}\cap\mathcal{Z}(f^{c})\neq\emptyset$ and $\mathbb{S}_{x}\cap\mathcal{Z}(f)\neq\emptyset$ if and only if $\mathbb{S}_{x}\cap\mathcal{P}(f^{-*})\neq\emptyset$.
\end{proof}

Again we specialize now to the case of balls.
\begin{corollary}\label{polecptcor}
Let $\Omega_{D}$ be a slice domain and let $f:\Omega_{D}\rightarrow\hat{\mathbb{H}}$ 
be a non-constant slice preserving semiregular function. Let $\rho>0$ such that the 
closed ball centered in zero with radius $\rho$, $\overline{\mathbb{B}_{\rho}}$, is contained in 
$\Omega_{D}$, then $\mathcal{P}(f)\cap \overline{\mathbb{B}_{\rho}}$ and 
$\mathcal{P}(f)\cap\partial \overline{\mathbb{B}_{\rho}}$ are finite unions of real points and isolated spheres.
\end{corollary}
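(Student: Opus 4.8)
The plan is to mirror the proof of Corollary~\ref{zerocptcor}, replacing zeros by poles and replacing the two ingredients used there (Remark~\ref{structpres} and Corollary~\ref{zerocpt}) by their pole-theoretic counterparts, namely Lemma~\ref{ordpoleslpr} and Proposition~\ref{polecpt}. The whole point is that slice preservation forces the singular set to be built out of real points and entire spheres, and then a compactness argument in the complex trace yields the desired finiteness.

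First I would establish the structure of $\mathcal{P}(f)$ when $f$ is slice preserving. By Remark~\ref{topsing}, for a semiregular $f$ the set $\mathcal{P}(f)$ consists of isolated real points and isolated $2$-spheres of the form $\mathbb{S}_{p}$; a priori a sphere $\mathbb{S}_{q}$ could carry a single pole, or poles of unequal order. Here Lemma~\ref{ordpoleslpr} intervenes: since $f$ is slice preserving, if $q\in\Omega_{D}$ is a pole then $ord_{f}(q')=ord_{f}(q)$ for every $q'\in\mathbb{S}_{q}$. Hence no non-real pole can be isolated inside its sphere, and each element of $\mathcal{P}(f)$ is either a real isolated point or a whole sphere $\mathbb{S}_{q}\subset\mathcal{P}(f)$. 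This is the exact analogue, for poles, of the statement in Remark~\ref{structpres} that a slice preserving function has no non-real $\mathbb{S}$-isolated zeros, and I expect it to be the only genuinely pole-specific input.

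Next I would run the compactness argument. For the first claim take the complex trace $K:=(\overline{\mathbb{B}_{\rho}})_{\mathbb{C}}=\{\alpha+i\beta\in\mathbb{C}\,|\,\alpha^{2}+\beta^{2}\leq\rho^{2}\}$, the closed disc of radius $\rho$; since $\overline{\mathbb{B}_{\rho}}\subset\Omega_{D}$ we have $K\subset D$, and $K$ is compact and clearly contains accumulation points. Proposition~\ref{polecpt} then gives that $\mathcal{P}(f)_{\mathbb{C}}\cap K$ is a finite set, say $\{w_{1},\dots,w_{m}\}$. I then read this finite set back into $\mathbb{H}$ using the structure from the previous step: each real $w_{l}$ corresponds to a real pole, while each non-real $w_{l}$ corresponds to the full sphere $\mathbb{S}_{w_{l}}\subset\mathcal{P}(f)$. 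Thus $\mathcal{P}(f)\cap\overline{\mathbb{B}_{\rho}}$ is a union of finitely many real points and finitely many spheres; the spheres are isolated because the finite set $\{w_{1},\dots,w_{m}\}$ is discrete in $\mathbb{C}$ and distinct $w_{l}$ yield disjoint spheres.

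For the boundary statement I would repeat the same argument with $K':=(\partial\mathbb{B}_{\rho})_{\mathbb{C}}=\{\alpha+i\beta\in\mathbb{C}\,|\,\alpha^{2}+\beta^{2}=\rho^{2}\}$, the circle of radius $\rho$, which is again compact, contained in $D$, and rich in accumulation points, so that Proposition~\ref{polecpt} applies verbatim. I do not expect a serious obstacle: the argument is essentially an assembly of Lemma~\ref{ordpoleslpr}, Remark~\ref{topsing} and Proposition~\ref{polecpt}, and the only point requiring real care is the slice-preserving structure of $\mathcal{P}(f)$, which is precisely what Lemma~\ref{ordpoleslpr} supplies.
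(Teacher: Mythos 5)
Your proposal is correct and follows exactly the route the paper intends: the corollary is stated there without an explicit proof, as an immediate specialization of Proposition~\ref{polecpt} (applied to the compact complex traces of $\overline{\mathbb{B}_{\rho}}$ and $\partial\mathbb{B}_{\rho}$) combined with the structure of the pole set of a slice preserving semiregular function coming from Remark~\ref{topsing} and Lemma~\ref{ordpoleslpr}. Your write-up simply makes explicit the assembly the authors leave to the reader, including the (correct) observation that a sphere $\mathbb{S}_{\alpha+I\beta}$ meeting $\overline{\mathbb{B}_{\rho}}$ lies entirely in it since all its points share the same modulus.
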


We end this subsection collecting all we need for the last parts of this paper.
\begin{remark}
Let $\Omega_{D}$ be a slice domain and $f:\Omega_{D}\rightarrow\hat{\mathbb{H}}$
be a slice preserving semiregular function. Let $\mathcal{ZP}(f):=\mathcal{Z}(f)\cup\mathcal{P}(f)$ be the set of zeros and poles of $f$, then, 
if $\{r_{h}\}_{h\in\mathbb{N}}$ is the set of real zeros and poles of $f$, $\{S_{k}\}_{k\in\mathbb{N}}$ the set of spherical zeros and poles, for any $k$, $q_{k}$ is any element in
$S_{k}$, and $\rho>0$ is such that the closed ball $\overline{\mathbb{B}_{\rho}}$
centered in zero of radius $\rho$, is contained in $\Omega_{D}$, then, 
\begin{equation*}
\boxed{
f_{|{\mathbb{B}_{\rho}}}(x)=x^{n}\left(\prod_{r_{h}\in(\mathcal{ZP}(f)\cap \mathbb{B}_{\rho})\cap\mathbb{R}}(x-r_{h})^{n_{h}}\right)\left(\prod_{S_{k}\in(\mathcal{ZP}(f)\cap \mathbb{B}_{\rho})\setminus\mathbb{R}}{((x-q_{k})^{s})}^{n_{k}}\right)g(x),
}
\end{equation*}
where $n,n_{h},n_{k}$ are all integers, the products are all finite (thanks to
Corollaries \ref{zerocptcor} and \ref{polecptcor}), and $g$ is a slice preserving regular function which has
no zeros nor poles in $\mathbb{B}_{\rho}$. 
\end{remark}

\subsection{Quaternionic $\rho$-Blaschke factors}

In this subsection we are going to reproduce some results proved in \cite{irenebla,shur} for a modification of quaternionic Blaschke factors.

\begin{definition}
Given $\rho>0$ and $a\in\mathbb{H}$ such
that $|a|<\rho$. We define the $\rho$\textit{-Blaschke factor} at $a$ 
as the following semiregular function:
\begin{equation*}
B_{a,\rho}:\mathbb{H}\rightarrow\hat{\mathbb{H}},\quad
B_{a,\rho}(x):=(\rho^{2}-x a^{c})*(\rho(x-a))^{-*}.
\end{equation*}
If now $\rho>0$ and $a\in\mathbb{H}\setminus\mathbb{R}$ is such that $|a|<\rho$, we define the $\rho$-Blaschke factor at the sphere $\mathbb{S}_{a}$ as the following slice preserving semiregular function:
\begin{equation*}
B_{\mathbb{S}_{a},\rho}:\mathbb{H}\rightarrow\hat{\mathbb{H}},\quad
B_{\mathbb{S}_{a},\rho}(x):=B_{a,\rho}^{s}(x).
\end{equation*}
\end{definition}

The previous definition makes sense thanks to Proposition \ref{semireg}. 
Moreover, in a more explicit form, we have:

%
\begin{equation*}
\begin{array}{rcl}
B_{\mathbb{S}_{a},\rho}(x):=B_{a,\rho}^{s}(x)& = & ((\rho^{2}-x a^{c})*(\rho(x-a))^{-*})^{s}\\
& = & (\rho^{2}-x a^{c})*(\rho(x-a))^{-*}*((\rho^{2}-x a^{c})*(\rho(x-a))^{-*})^{c}\\
& = & (\rho^{2}-x a^{c})*(\rho(x-a))^{-*}*((\rho(x-a))^{-*})^{c}*(\rho^{2}-x a^{c})^{c}\\
& = & (\rho^{2}-x a^{c})*((\rho(x-a))^{c}*\rho(x-a))^{-*}*(\rho^{2}-x a^{c})^{c}.
\end{array}
\end{equation*}
Observe that the central factor, is such that,
\begin{equation*}
(\rho(x-a))^{c}*\rho(x-a))=\rho^{2}(x-a)^{s}
\end{equation*}
and so it is a slice preserving function. Therefore, 
\begin{equation*}
B_{\mathbb{S}_{a},\rho}(x):=B_{a,\rho}^{s}(x)=(\rho^{2}(x-a)^{s})^{-1}(\rho^{2}-x a^{c})*(\rho^{2}-xa).
\end{equation*}
Moreover, since,
\begin{equation*}
\begin{array}{rlll}
(\rho^{2}-x a^{c})*(\rho^{2}-xa) & = & (x a^{c}-\rho^{2})*(xa-\rho^{2})& =((x-\rho^{2} (a^{c})^{-1}) a^{c})*(xa-\rho^{2})\\
& = & (x-\rho^{2} (a^{c})^{-1})* a^{c}*(xa-\rho^{2})
& =  (x-\rho^{2} (a^{c})^{-1})*(x|a|^{2}- a^{c} \rho^{2})\\
& = & (x-\rho^{2} (a^{c})^{-1})*(xa-\rho^{2}) a^{c}
& =  (x-\rho^{2} (a^{c})^{-1})*(x-\rho^{2}a^{-1})|a|^{2}\\
& = & (x-\rho^{2}a^{-1})^{s}|a|^{2}, &
\end{array}
\end{equation*}
then
\begin{equation*}
B_{\mathbb{S}_{a},\rho}(x)=(\rho^{2}(x-a)^{s})^{-1}(x-\rho^{2}a^{-1})^{s}|a|^{2}.
\end{equation*}

\begin{remark}\label{zerobla}
The $\rho$-Blaschke factor at $a$ has only a zero at $\rho^{2} (a^{c})^{-1}$ and a pole at the sphere $\mathbb{S}_{a}$ (collapsing to a point when $a\in\mathbb{R}$), while the $\rho$-Blaschke factor at $\mathbb{S}_{a}$ has a spherical zero at $\mathbb{S}_{\rho^{2} a^{-1}}$ and a pole at the sphere $\mathbb{S}_{a}$.
\end{remark}

We now expose a result similar to Theorem 5.5 of \cite{irenebla}.

\begin{theorem}\label{borderbla}
Given $\rho>0$  and $a\in\mathbb{H}$. The
$\rho$-Blaschke factors $B_{a,\rho}$ and $B_{\mathbb{S}_{a},\rho}$ have the following properties:
\begin{itemize}
\item they satisfy $B_{a,\rho}(\mathbb{H}\setminus\mathbb{B}_{\rho})\subset \mathbb{B}$, $B_{\mathbb{S}_{a},\rho}(\mathbb{H}\setminus\mathbb{B}_{\rho})\subset \mathbb{B}$ and $B_{a,\rho}(\mathbb{B}_{\rho})\subset \mathbb{H}\setminus\mathbb{B}$, $B_{\mathbb{S}_{a},\rho}(\mathbb{B}_{\rho})\subset \mathbb{H}\setminus\mathbb{B}$;
\item they send the boundary of the ball  
$\partial\mathbb{B}_{\rho}$ in the boundary of the ball $\partial \mathbb{B}$.
\end{itemize}
\end{theorem}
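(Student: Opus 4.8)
The plan is to reduce every assertion to a comparison of the quaternionic modulus $|B_{a,\rho}(x)|$ (respectively $|B_{\mathbb{S}_a,\rho}(x)|$) with $1$: indeed $B(\Omega)\subset\mathbb{B}$ means $|B|<1$ on $\Omega$, $B(\Omega)\subset\mathbb{H}\setminus\mathbb{B}$ means $|B|>1$, and the boundary statement is the equality case. The engine is the quaternionic analogue of the classical Blaschke identity: if $|x|=\rho$ then $xx^{c}=\rho^{2}$, hence
\begin{equation*}
\rho^{2}-x a^{c}=xx^{c}-x a^{c}=x(x^{c}-a^{c})=x\,(x-a)^{c},
\end{equation*}
so that $|\rho^{2}-x a^{c}|=|x|\,|x-a|=\rho\,|x-a|$ on $\partial\mathbb{B}_{\rho}$. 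The difficulty is that $B_{a,\rho}=(\rho^{2}-x a^{c})*(\rho(x-a))^{-*}$ is assembled from the slice product and the slice reciprocal, which are \emph{not} pointwise operations, so this identity cannot be substituted naively as in the commutative case.

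First I would treat $B_{a,\rho}$. Using Proposition~\ref{prodcomp} together with its semiregular counterpart Proposition~\ref{semireg}, both the slice product and the slice reciprocal turn into honest pointwise quaternionic products, but evaluated at a conjugated argument $T$ lying on the same sphere $\mathbb{S}_{x}$ as $x$; the two facts I would lean on are that $|T|=|x|$ and that the modulus of a slice preserving function is constant on each sphere. Carrying this out, $|B_{a,\rho}(x)|^{2}$ becomes a ratio of two real quadratic expressions in $|x|^{2}$, $|a|^{2}$ and a cross term of the form $\mathrm{Re}(\cdot)$ that depends on the chosen point of the sphere. The key point is that this cross term is shared by numerator and denominator and therefore cancels when one forms $|B_{a,\rho}(x)|^{2}-1$, leaving
\begin{equation*}
|B_{a,\rho}(x)|^{2}-1=\frac{(\rho^{2}-|x|^{2})(\rho^{2}-|a|^{2})}{\rho^{2}\,\delta(x)},\qquad \delta(x)>0.
\end{equation*}
Since $|a|<\rho$ forces $\rho^{2}-|a|^{2}>0$, the sign of $|B_{a,\rho}(x)|-1$ is that of $\rho^{2}-|x|^{2}$, whence $|B_{a,\rho}|>1$ on $\mathbb{B}_{\rho}$, $|B_{a,\rho}|=1$ on $\partial\mathbb{B}_{\rho}$, and $|B_{a,\rho}|<1$ on $\mathbb{H}\setminus\overline{\mathbb{B}_{\rho}}$ in one stroke, which is exactly the list of mapping properties and the boundary statement for $B_{a,\rho}$.

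For $B_{\mathbb{S}_a,\rho}=B_{a,\rho}^{s}$ I would exploit that it is slice preserving, so its modulus is constant on every sphere $\mathbb{S}_{x}$. Choosing the slice $\mathbb{C}_{J}$ containing $a$, where the slice product degenerates to the pointwise product, one has $B_{a,\rho}^{s}=B_{a,\rho}^{c}\cdot B_{a,\rho}$ on $\mathbb{C}_{J}$ and hence $|B_{\mathbb{S}_a,\rho}(x)|=|B_{a,\rho}(x^{c})|\,|B_{a,\rho}(x)|$ there; since $|x^{c}|=|x|$, the previous step puts both factors simultaneously on the same side of $1$, and sphere-constancy propagates the trichotomy to all of $\mathbb{H}$. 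Equivalently, one may substitute $q=a$ and $q=\rho^{2}a^{-1}$ into the explicit form $B_{\mathbb{S}_a,\rho}(x)=(\rho^{2}(x-a)^{s})^{-1}(x-\rho^{2}a^{-1})^{s}|a|^{2}$ and use, for $x=\alpha+I\beta$, the identity $|(x-q)^{s}(x)|^{2}=((\alpha-\mathrm{Re}\,q)^{2}+(\beta-|\mathrm{Im}\,q|)^{2})((\alpha-\mathrm{Re}\,q)^{2}+(\beta+|\mathrm{Im}\,q|)^{2})$. The main obstacle, and the only genuinely nontrivial point, is the cross-term cancellation in $|B_{a,\rho}(x)|^{2}$: one must check that, although the composition point $T$ moves with the chosen point of $\mathbb{S}_{x}$, the quantity deciding whether $|B_{a,\rho}(x)|$ is larger than, equal to, or smaller than $1$ depends only on $|x|$, so that the mapping behaviour is genuinely radial and the three inclusions hold uniformly over each sphere.
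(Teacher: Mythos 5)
Your proposal is correct and follows essentially the same route as the paper: both reduce the statement to the sign of $|B_{a,\rho}(x)|^{2}-1$, use Propositions~\ref{prodcomp} and~\ref{semireg} to replace the slice operations by pointwise ones evaluated at some $\tilde x\in\mathbb{S}_{x}$ with $|\tilde x|=|x|$, and observe that the cross terms cancel so that $|\rho^{2}-\tilde xa^{c}|^{2}-|\rho(\tilde x-a)|^{2}=(\rho^{2}-|x|^{2})(\rho^{2}-|a|^{2})$, which settles all three cases at once. The only difference is that you spell out the reduction of $B_{\mathbb{S}_{a},\rho}$ to the punctual case in more detail than the paper, which simply declares that proof analogous.
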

\begin{proof}
We prove the result for the $\rho$-Blaschke factor $B_{a,\rho}$, the proof
for the other $B_{\mathbb{S}_{a},\rho}$ goes analogously.
Since $a$ and $ a^{c}$ lie in the same slice, then,
$B_{a,\rho}(x):=(\rho^{2}-x a^{c})*(\rho(x-a))^{-*}=(\rho(x-a))^{-*}*(\rho^{2}-x a^{c})$. 
Hence, thanks to Proposition \ref{semireg}, for any
$x\in\mathbb{H}\setminus\mathbb{S}_{a}$,
there exists $\tilde x\in\mathbb{S}_{x}$, such that 
\begin{equation*}
|B_{a,\rho}(x)|^{2}=|(\rho^{2}-\tilde x a^{c})|^{2}|\rho(\tilde x-a)|^{-2}.
\end{equation*}
Therefore $|B_{a,\rho}|^{2}<1$ if and only if $|(\rho^{2}-\tilde x a^{c})|^{2}<|\rho(\tilde x-a)|^{2}$ and this is equivalent to,
\begin{equation*}
\rho^{4}+|x|^{2}|a|^{2}<\rho^{2}(|x|^{2}+|a^{2}|).
\end{equation*}
But now, the last inequality is equivalent to say $(\rho^{2}-|x|^{2})(\rho^{2}-|a|^{2})<0$ and this is possible if and only if $\rho^{2}<|x|^{2}$.

For the second part of the theorem, repeat the previous computations observing  that imposing $|B_{a,\rho}(x)|^{2}=1$ is equivalent to $|x|=\rho$.
\end{proof}

\subsection{PQL functions}

In this subsection we want to introduce a family of quaternionic functions of one
quaternionic variable which will be part of the subject of what follows.
\begin{definition}
Let $\{q_{k}\}_{k=1}^{N}\subset\mathbb{H}$, $\{a_{k}\}_{k=0}^{N}\subset\mathbb{H}\setminus\{0\}$ be finite sets of, possibly repeated quaternions
and fix, for any $k=1,\dots, N$, $M_{k}\in\{\pm 1\}$. 
A function $f:\mathbb{H}\rightarrow\hat{\mathbb{H}}$ is said to be a \textit{PQL function} if it is given by:
\begin{equation*}
f(x)=a_{0}\prod_{k=1}^{N}(x-q_{k})^{M_{k}}a_{k}.
\end{equation*}
\end{definition}

The class of PQL functions is not contained in the class of slice regular functions. If, in fact we consider two non real quaternions $q_{0}, q_{1}$,
and consider the following two PQL functions: $f_{1}(x):=x-q_{0}$ and $f_{2}(x):=(x-q_{0})(x-q_{1})$. Then, obviously,
$f_{1}$ is regular but $f_{2}(x)=x^{2}-xq_{1}-q_{0}x+q_{0}q_{1}$ is not.
\begin{example}
A particular subclass of PQL-functions is the class of {\it linear fractional transformations} of the extended quaternionic space $\mathbb{H} \cup \{ \infty \} \cong \mathbb{HP}^1$. 
Recalling that $GL( 2, \mathbb{H})$ denotes the group of $ 2 \times 2$ invertible quaternionic matrices, one way to represent linear fractional transformations is the following:
$$
\mathbb{G} = \left\{ g(x)=(ax+b)(cx+d)^{-1} \,\, | \,\, \begin{bmatrix}  
                                                                                          a & b \\
                                                                                          c & d \\
                                                                                      \end{bmatrix} 
                                                                                      \in GL( 2, \mathbb{H} )   \right\}.
$$
It is well known that $\mathbb{G}$ forms a group with respect to the composition operation. Denotes now $SL(2, \mathbb{H})$ as the subgroup of the matrices of $GL( 2, \mathbb{H})$ with Dieudonn\'e determinant equal to 1. 
Moreover, the linear fractional transformation $g(x)=(ax+b)(cx+d)^{-1}$ is constant iff the Dieudonn\'e determinant of the associated matrix $\begin{bmatrix}
																																																																																								a & b \\
																																																																																								c & d 
																																																																																								\end{bmatrix} $
is zero. \\
The group $\mathbb{G}$ is isomorphic to $PSL( 2 , \mathbb{H}) =SL(2, \mathbb{H}) / \{ \pm Id \}$ and to $GL(2, \mathbb{H}) / \{ k\cdot  Id \},$ where $k \in \mathbb{R} \setminus \{ 0 \} ;$ all the elements in $\mathbb{G}$ are conformal maps: for a proof of these facts, for a definition of Dieudonn\'e determinant and for more
details see \cite{bisigentili}, \cite{bisigentilitrends} and \cite{bisistoppato}. 
The group $\mathbb{G}$ is generated with usual composition by the following four types of transformations:
\begin{center}
\begin{tabular}{ l l }
 i) $L_1 (x)=x+b, \,\, b \in \mathbb{H}$; & iii) $L_3 (x)= r  x=x  r, \,\, r \in \mathbb{R}^{+} \setminus \{ 0 \}$;  \\ 
 & \\
 ii) $L_2 (x)= x  a, \,\, a \in \mathbb{H}, \,\, |a|=1$; & iv) $L_4 (x)=x^{-1}$. \\   
\end{tabular}
\end{center}

Well studied is also the subgroup $\mathbb{M}$ of $\mathbb{G}$ of the so called {\it M\"obius transformations} mapping the quaternionic open unit ball $\mathbb{B}$ onto itself. This is defined as follows:

$$
Sp(1,1)= \{ C \in GL(2, \mathbb{H})\,\, | \,\, \overline{C}^t H C = H \} \subset SL(2, \mathbb{H}), \,\mbox{ where }\, H= \begin{bmatrix}
           1 & 0 \\
           0 & -1 \\
           \end{bmatrix},
$$
and an element $g \in \mathbb{G}$ is a M\"obius transformation in $\mathbb{M}$ if and only if $g(x)=(ax +b)  (cx+d)^{-1}$ with $\begin{bmatrix}
                                                                                                                                                                                                         a & b \\
                                                                                                                                                                                                         c & d \\
                                                                                                                                                                                                \end{bmatrix} \in Sp(1,1).$
This is equivalent to
$$g(x)= v (x-q_0)(1-\overline{q}_0 x)^{-1} u^{-1}$$ 
for some $u,v \in \partial \mathbb{B},$ and $q_0 \in \mathbb{B}$. Observe the
link between this function and the following punctual $1$-Blaschke functions: in a certain sense functions like the previous $g$ are reciprocal of the one defined below if $u$ and $v$ are both equal to $1$.
\end{example}

\begin{remark}\label{NivenP}
Notice that any PQL functions with positive exponents $M_{k}$ is a Niven polynomial~\cite{niven}, i.e. a function of the type $a_{0}xa_{1}x\dots xa_{n}+\phi(x)$, where $a_{0},a_{1},\dots, a_{n}$ are non-zero quaternions and $\phi$ is a sum of a finite number of similar functions $b_{0}xb_{1}x\dots xb_{k}$, $k<n$. Such functions satisfy a ``fundamental theorem of algebra''. Moreover, the opposite is not true, meaning that not all Niven polynomials can be written as
a PQL function and a counterexample is given by the function $h(q)=-iq^2i +(i+1)q(i+1)^{-1}$.
\end{remark}

%
%
%


\begin{definition}
Given $\rho>0$ and $a\in\mathbb{H}$ such
that $|a|<\rho$. We define the punctual $\rho$\textit{-Blaschke} factor at $a$ to be the PQL function
\begin{equation*}
B_{a,\rho}^{p}:\mathbb{H}\rightarrow\hat{\mathbb{H}},\quad
B_{a,\rho}^{p}(x):=(\rho^{2}-x a^{c})(\rho(x-a))^{-1}.
\end{equation*}

\end{definition}

Analogously (even in the proof) of Theorem \ref{borderbla} we have the following result.

\begin{theorem}\label{borderblapun}
Given $\rho>0$ and $a\in\mathbb{B}_{\rho}\subset\mathbb{H}$, $|a|<\rho.$ The
punctual $\rho$-Blaschke factors $B_{a,\rho}^{p}$ have the following properties:
\begin{itemize}
\item they satisfy $B_{a,\rho}^{p}(\mathbb{H}\setminus\mathbb{B}_{\rho})\subset \mathbb{B}$ and $B_{a,\rho}^{p}(\mathbb{B}_{\rho})\subset \mathbb{H}\setminus\mathbb{B}$;

\item they send the boundary of the ball 
$\partial\mathbb{B}_{\rho}$ onto the boundary of the ball $\partial \mathbb{B}$.
\end{itemize}
\end{theorem}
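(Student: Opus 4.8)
The plan is to exploit the fact that $B^{p}_{a,\rho}$ is defined through the \emph{ordinary} (punctual) product of quaternions rather than the slice product, so that the multiplicativity of the norm, $|pq|=|p|\,|q|$, applies directly and no appeal to Proposition~\ref{semireg} or to a representative $\tilde x\in\mathbb{S}_{x}$ (as in the proof of Theorem~\ref{borderbla}) is needed. First I would compute, for every $x\neq a$,
\begin{equation*}
|B^{p}_{a,\rho}(x)|=\frac{|\rho^{2}-xa^{c}|}{\rho\,|x-a|},
\end{equation*}
using $|(\rho(x-a))^{-1}|=(\rho\,|x-a|)^{-1}$.

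Next I would translate $|B^{p}_{a,\rho}(x)|<1$ into $|\rho^{2}-xa^{c}|^{2}<\rho^{2}|x-a|^{2}$ and expand both sides via $|q|^{2}=qq^{c}$, using $a^{c}a=|a|^{2}$ and $(xa^{c})^{c}=ax^{c}$. The mixed terms $-\rho^{2}(xa^{c}+ax^{c})$ occur identically on both sides and cancel, leaving
\begin{equation*}
\rho^{4}+|a|^{2}|x|^{2}<\rho^{2}\bigl(|x|^{2}+|a|^{2}\bigr),
\end{equation*}
that is $(\rho^{2}-|a|^{2})(\rho^{2}-|x|^{2})<0$. Since $|a|<\rho$ the first factor is positive, so this is equivalent to $|x|>\rho$; symmetrically $|B^{p}_{a,\rho}(x)|>1\Leftrightarrow|x|<\rho$, and $|B^{p}_{a,\rho}(x)|=1\Leftrightarrow|x|=\rho$. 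These three relations yield at once the first item (the exterior landing in $\mathbb{B}$, and the interior $\mathbb{B}_{\rho}$ — including the pole $x=a$, which is sent to $\infty\notin\mathbb{B}$ — landing in $\hat{\mathbb{H}}\setminus\mathbb{B}$) and the fact that $\partial\mathbb{B}_{\rho}$ is mapped \emph{into} $\partial\mathbb{B}$.

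The one point genuinely beyond the analogy with Theorem~\ref{borderbla} is the surjectivity asserted by the word ``onto'', and this is the step I expect to require a separate argument. Fix $w\in\partial\mathbb{B}$, i.e. $|w|=1$. Clearing the denominator, $B^{p}_{a,\rho}(x)=w$ is equivalent to the real-linear equation $w\rho\,x+x\,a^{c}=\rho^{2}+w\rho a$ in $x\in\mathbb{H}\cong\mathbb{R}^{4}$. Its associated homogeneous equation $(w\rho)x=x(-a^{c})$ would force $w\rho$ and $-a^{c}$ to be similar quaternions, hence to have equal norm; but $|w\rho|=\rho\neq|a|=|-a^{c}|$, so it has only the trivial solution and the $\mathbb{R}$-linear map $x\mapsto w\rho\,x+x\,a^{c}$ is invertible. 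Thus there is a unique $x_{w}$ with $B^{p}_{a,\rho}(x_{w})=w$, and the modulus identity of the previous paragraph forces $|x_{w}|=\rho$ (in particular $x_{w}\neq a$, so the value is genuinely $w$ and not $\infty$). Hence every $w\in\partial\mathbb{B}$ has a preimage on $\partial\mathbb{B}_{\rho}$, upgrading ``into'' to ``onto''. The main obstacle is therefore not the norm estimate — routine once multiplicativity of $|\cdot|$ is used — but recognizing that the non-similarity of $w\rho$ and $a^{c}$, guaranteed precisely by the hypothesis $|a|<\rho$, makes the relevant Sylvester-type map invertible and delivers the surjectivity.
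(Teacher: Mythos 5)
Your norm computation is exactly the argument the paper intends: the paper gives no separate proof of Theorem~\ref{borderblapun}, saying only that it goes ``analogously (even in the proof)'' to Theorem~\ref{borderbla}, and later (in the proof of Theorem~\ref{jensen}) points out that for the punctual factors one should simply use $|pq|=|p|\,|q|$. You correctly observe that this multiplicativity makes the detour through Proposition~\ref{semireg} and the representative $\tilde x\in\mathbb{S}_x$ unnecessary, and your expansion leading to $(\rho^{2}-|a|^{2})(\rho^{2}-|x|^{2})<0$ is the same algebra as in the proof of Theorem~\ref{borderbla}; so up to that point the two proofs coincide in substance. Where you genuinely go beyond the paper is the surjectivity onto $\partial\mathbb{B}$: the paper's argument only establishes the equivalence $|B^{p}_{a,\rho}(x)|=1\Leftrightarrow|x|=\rho$, which gives ``into'' together with the exact preimage of $\partial\mathbb{B}$, but never addresses the ``onto'' in the statement. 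Your Sylvester-type argument --- the $\mathbb{R}$-linear map $x\mapsto w\rho\,x+x\,a^{c}$ is injective because a nontrivial kernel element would make $w\rho$ and $-a^{c}$ similar, hence of equal norm, contradicting $|a|<\rho$ --- is correct (including the check that the unique solution $x_{w}$ cannot equal $a$, since that would force $|a|=\rho$), and it supplies a step that the paper implicitly asserts but does not prove. In short: same route for the estimates, plus a clean and worthwhile addition for the surjectivity.
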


\subsection{Quaternionic holomorphic functions}

This last subsection contains the actual starting point of the original part of this work.

First of all, if we represent a quaternion $x$ as $x_{0}+ix_{1}+jx_{2}+kx_{3}$,
we can define the following two quaternionic differential operators, 
\begin{equation*}
\overline{\mathcal{D}}_{CF}=\frac{\partial}{\partial x_{0}}-i\frac{\partial}{\partial x_1}-j\frac{\partial}{\partial x_2}-k\frac{\partial}{\partial x_3},\quad
\mathcal{D}_{CF}=\frac{\partial}{\partial x_{0}}+i\frac{\partial}{\partial x_1}+j\frac{\partial}{\partial x_2}+k\frac{\partial}{\partial x_3}.
\end{equation*}
The previous two operators are called \textit{Cauchy-Fueter} operators. 

 \begin{definition}
 Let $\Omega\subset\mathbb{H}$ be a domain.
  A quaternionic function of one quaternionic variable $f:\Omega\rightarrow\mathbb{H}$ of class $\mathcal{C}^{3}$ is 
  said to be (left) \textit{quaternionic holomorphic} if it satisfies the following equation
 \begin{equation}\label{quahol}
\mathcal{D}_{CF}\Delta f(x)=0,
 \end{equation}
where $x=x_{0}+x_1i+x_2j+x_3k$, and $\Delta$ denotes the usual laplacian in the four real variables $x_{0}, x_1, x_2, x_3$.
%
 \end{definition}

The key observation, now, is the fact that quaternionic polynomials and converging power series of the variable $x$ with coefficients on the right are contained in the set of quaternionic holomorphic functions (see \cite{fueter}).
Now, even if we have seen quickly how to expand semiregular functions
in power and Laurent series, these not always have euclidean open sets of convergence.
Nevertheless a different type of series expansion has been studied so far, namely the \textit{spherical power series} \cite{genstostru, ghiloniperotti3, ghilperstosing},
and it admits actual euclidean open domains as domains of convergence. For this reason the following proposition, already observed in 
the PhD thesis of the first author \cite{altavillaPhD}, holds true.

\begin{proposition}[\cite{altavillaPhD}]
Any  regular function $f:\Omega_{D}\rightarrow\mathbb{H}$ is quaternionic holomorphic.
Moreover, since $f$ satisfies equation \eqref{quahol}, then it also satisfies the following equation:
\begin{equation*}
 \Delta\Delta f=0.
\end{equation*}
\end{proposition}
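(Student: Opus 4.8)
The plan is to reduce the statement to polynomials with right coefficients, for which quaternionic holomorphicity is already recorded in \cite{fueter}, and then to propagate the property to an arbitrary regular function by a local series argument. Observe first that the defining condition $\mathcal{D}_{CF}\Delta f=0$ is linear and purely local, so it is enough to verify it on a Euclidean neighborhood of each point $p\in\Omega_{D}$; moreover, since every regular function is real analytic and hence $\mathcal{C}^{\infty}$, no regularity issue arises in applying the third order operator $\mathcal{D}_{CF}\Delta$.

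First I would fix $p=\alpha+I\beta\in\Omega_{D}$ and expand $f$ in its spherical power series centered at the sphere $\mathbb{S}_{p}$. The decisive point, anticipated in the lines preceding the statement, is that---contrary to the ordinary $*$-power series $\sum_{n}(x-p)^{*n}a_{n}$, whose region of convergence need not be Euclidean open---the spherical power series converges on a genuinely four dimensional open neighborhood of $p$ (see \cite{genstostru, ghiloniperotti3, ghilperstosing}). Each of its summands has the form $[(x-p)^{s}]^{n}(c_{n}+(x-p)d_{n})$ with $c_{n},d_{n}\in\mathbb{H}$. Since $(x-p)^{s}=x^{2}-2Re(p)\,x+|p|^{2}$ has real coefficients and therefore commutes with $x$, every such summand rearranges into a polynomial of the form $\sum_{k}x^{k}a_{k}$ with all coefficients collected on the right.

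Next I would invoke \cite{fueter}, according to which every monomial $x^{k}a_{k}$---and hence every polynomial $\sum_{k}x^{k}a_{k}$ with right coefficients---lies in the kernel of $\mathcal{D}_{CF}\Delta$. Consequently each summand of the spherical expansion is quaternionic holomorphic, and the partial sums form a sequence of quaternionic holomorphic polynomials converging to $f$. To conclude, it remains to commute $\mathcal{D}_{CF}\Delta$ with the series: this is legitimate provided the spherical power series converges, uniformly on compact subsets of its domain, together with all partial derivatives up to order three. Granting this $\mathcal{C}^{3}$-convergence---the analogue of the normal convergence of complex power series---the limit inherits the equation, so $\mathcal{D}_{CF}\Delta f=0$ near $p$, and, $p$ being arbitrary, throughout $\Omega_{D}$.

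For the final assertion I would exploit the factorization of the Laplacian by the Cauchy-Fueter operators. A direct computation---using $i^{2}=j^{2}=k^{2}=-1$, the anticommutation $ij=-ji$, $jk=-kj$, $ki=-ik$, and the commutativity of partial derivatives---gives
\begin{equation*}
\overline{\mathcal{D}}_{CF}\,\mathcal{D}_{CF}=\mathcal{D}_{CF}\,\overline{\mathcal{D}}_{CF}=\Delta
\end{equation*}
as operators on $\mathcal{C}^{2}$ quaternion valued functions. Applying $\overline{\mathcal{D}}_{CF}$ to the identity $\mathcal{D}_{CF}\Delta f=0$ and using this factorization on the function $\Delta f$ yields $\Delta\Delta f=\overline{\mathcal{D}}_{CF}\,\mathcal{D}_{CF}(\Delta f)=\overline{\mathcal{D}}_{CF}(0)=0$, the smoothness required for a fourth order operator being again supplied by real analyticity. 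I expect the only genuinely delicate point of the whole proof to be not these algebraic identities but the justification of term by term differentiation of the spherical series, that is, its $\mathcal{C}^{3}$-convergence on Euclidean open sets.
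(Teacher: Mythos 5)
Your proposal follows essentially the same route as the paper, which likewise reduces the claim to Fueter's result that polynomials and power series with right coefficients are quaternionic holomorphic, invokes the spherical power series expansion precisely because its domains of convergence are genuinely Euclidean open, and obtains $\Delta\Delta f=0$ from the factorization $\Delta=\overline{\mathcal{D}}_{CF}\mathcal{D}_{CF}$. The one point you elaborate beyond the paper's sketch --- the $\mathcal{C}^{3}$ term-by-term differentiation --- can in fact be bypassed: locally uniform convergence already lets you pass $\mathcal{D}_{CF}\Delta$ to the limit in the sense of distributions, and since $f$ is known to be real analytic the identity then holds classically.
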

The last equality holds because $\overline{\mathcal{D}}_{CF}\mathcal{D}_{CF}=\mathcal{D}_{CF}\overline{\mathcal{D}}_{CF}=\Delta$.
For more details about  the theory of quaternionic holomorphic functions, we refer to 
\cite{fueter, laville,pernas}.


\begin{remark}
Not all PQL functions are quaternionic holomorphic: it is sufficient
to consider the function $h(x)=x(x-i)(x-j)x=x^{4}-x[k-ix-xj]x$. The function $x^{4}$ is quaternionic holomorphic,
on the other hand, $\mathcal{D}_{CF}\Delta (x[k-ix-xj]x)\neq 0$. 
\end{remark}

As pointed out by Perotti in [\cite{perotti}, Section 6], 
for any class $\mathcal{C}^{1}$ slice function $f$, the following two
formulas hold true:
\begin{equation}\label{cfslice}
\boxed{
\mathcal{D}_{CF}f-2\overline{\partial}_{c}f=-2\partial_{s}f,\quad \overline{\mathcal{D}}_{CF}f-2\partial_{c} f=2\partial_{s}f}
\end{equation}
Moreover he stated the following theorem. 
\begin{theorem}[\cite{perotti}]\label{agacse}
Let $\Omega_{D}$ be any circular domain and
let $f:\Omega_{D}\subset\mathbb{H}\rightarrow\mathbb{H}$ be a slice function of
class $\mathcal{C}^{1}(\Omega_{D})$, then
$f$ is regular if and only if $\mathcal{D}_{CF}f=-2\partial_{s}f$.
Moreover it holds $2\partial_{c}\partial_{s}f=\overline{\mathcal{D}}_{CF}\partial_{s}f$.

If now $f$ is regular, then, 
$\mathcal{D}_{CF}\Delta f=\Delta\mathcal{D}_{CF} f=-2\Delta \partial_{s} f=0$,
i.e. the spherical derivative of a regular function is harmonic. Moreover
$\partial_{c}\partial_{s}f$  is harmonic too.
\end{theorem}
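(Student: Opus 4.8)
The plan is to read off all four assertions from the two pointwise identities in \eqref{cfslice}, using three facts already recorded above: a slice function $f$ is regular if and only if $\overline{\partial}_c f\equiv 0$; one has $\partial_s(\partial_s f)=0$ for every slice function; and the Cauchy--Fueter operators factor the laplacian as $\overline{\mathcal{D}}_{CF}\mathcal{D}_{CF}=\mathcal{D}_{CF}\overline{\mathcal{D}}_{CF}=\Delta$. For the first equivalence I would simply rewrite the first identity of \eqref{cfslice} as $\mathcal{D}_{CF}f=2\overline{\partial}_c f-2\partial_s f$; then the equation $\mathcal{D}_{CF}f=-2\partial_s f$ holds exactly when $\overline{\partial}_c f\equiv 0$, that is, exactly when $f$ is regular. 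This settles the characterization with no computation beyond the definition of regularity.

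Next I would establish $2\partial_c\partial_s f=\overline{\mathcal{D}}_{CF}\partial_s f$. Since $\partial_s f$ is itself a slice function, I apply the second identity of \eqref{cfslice} to $g=\partial_s f$, obtaining $\overline{\mathcal{D}}_{CF}(\partial_s f)-2\partial_c(\partial_s f)=2\partial_s(\partial_s f)$, and then use $\partial_s(\partial_s f)=0$ to annihilate the right-hand side. This yields the asserted formula directly.

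Finally, assuming $f$ regular, I would argue as follows. By the Proposition of \cite{altavillaPhD} recalled above, every regular function is quaternionic holomorphic, i.e. it satisfies \eqref{quahol}, so $\mathcal{D}_{CF}\Delta f=0$. Because $\Delta$ and $\mathcal{D}_{CF}$ have constant coefficients they commute, giving $\mathcal{D}_{CF}\Delta f=\Delta\mathcal{D}_{CF}f$; applying $\Delta$ to the first claim gives $\Delta\mathcal{D}_{CF}f=-2\Delta\partial_s f$. Chaining these three displays produces $\mathcal{D}_{CF}\Delta f=\Delta\mathcal{D}_{CF}f=-2\Delta\partial_s f=0$, so that $\partial_s f$ is harmonic. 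For the last assertion I would use the identity from the previous paragraph to write $\partial_c\partial_s f=\tfrac12\overline{\mathcal{D}}_{CF}\partial_s f$, and then commute $\overline{\mathcal{D}}_{CF}$ past $\Delta$ to get $\Delta\partial_c\partial_s f=\tfrac12\overline{\mathcal{D}}_{CF}\Delta\partial_s f=0$, whence $\partial_c\partial_s f$ is harmonic as well. The only genuinely delicate point is the smoothness of $\partial_s f$ across the real axis needed to legitimately apply \eqref{cfslice} and $\Delta$ to it; in the regular case this is not an obstruction, since $f$ — and therefore $\partial_s f$ — is real-analytic.
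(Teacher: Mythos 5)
The paper offers no proof of this statement: it is quoted verbatim from \cite{perotti} (``Moreover he stated the following theorem''), so there is no in-paper argument to compare yours against. That said, your derivation is correct and assembles exactly the ingredients the paper recalls just before the theorem: the first identity of \eqref{cfslice} turns $\mathcal{D}_{CF}f=-2\partial_s f$ into $\overline{\partial}_c f\equiv 0$, which is the definition of regularity; the second identity applied to the slice function $\partial_s f$ together with $\partial_s(\partial_s f)=0$ gives $2\partial_c\partial_s f=\overline{\mathcal{D}}_{CF}\partial_s f$; and the chain $\mathcal{D}_{CF}\Delta f=\Delta\mathcal{D}_{CF}f=-2\Delta\partial_s f$ plus the vanishing of $\mathcal{D}_{CF}\Delta f$ yields harmonicity of $\partial_s f$, hence of $\partial_c\partial_s f=\tfrac12\overline{\mathcal{D}}_{CF}\partial_s f$. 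Two dependencies are worth making explicit. First, the vanishing $\mathcal{D}_{CF}\Delta f=0$ is imported from the Proposition of \cite{altavillaPhD} (regular $\Rightarrow$ quaternionic holomorphic), which this paper also recalls without proof; within the paper's logical structure that is legitimate, but be aware that in \cite{perotti} the implication is essentially proved the other way around, with the harmonicity of $\partial_s f$ obtained directly and Fueter-type identities deduced from it, so your argument is not the ``native'' one. Second, applying \eqref{cfslice} and $\Delta$ to $\partial_s f$ presupposes that $\partial_s f$ is of class $\mathcal{C}^1$ (resp.\ $\mathcal{C}^2$), which for a merely $\mathcal{C}^1$ slice function is delicate across the real axis; you correctly note that real-analyticity of regular functions disposes of this for the harmonicity claims, but the intermediate identity $2\partial_c\partial_s f=\overline{\mathcal{D}}_{CF}\partial_s f$ is asserted in the statement for general $\mathcal{C}^1$ slice functions, where strictly speaking it should be read away from $\mathbb{R}$ or under the regularity hypotheses of \cite{perotti}.
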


In the sequel, we will widely use these recalled results.\\
On the other hand, if the reader wants to see another approach to harmonicity for slice regular functions, we recommend the enlighten reading of \cite{BW}.

\section{Log-biharmonicity and Riesz measure}

We start now with a notation:
given a semiregular function $f:\Omega_{D}\rightarrow\hat{\mathbb{H}}$, we remember
the following set:
\begin{equation*}
\mathcal{ZP}(f)=\mathcal{Z}(f)\cup \mathcal{P}(f)\subset\Omega_{D}.
\end{equation*}

The first fundamental result is the following.
\begin{theorem}\label{slicepreslogbi}
Let $f=\mathcal{I}(F):\Omega_{D}\rightarrow\mathbb{H}$ be a slice preserving semiregular function. Then,
\begin{equation*}
\Delta^{2}\log|f|^{2}=0,\quad \forall x\in\Omega_{D}\setminus\mathcal{ZP}(f).
\end{equation*}
\end{theorem}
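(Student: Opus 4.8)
The plan is to use the slice preserving hypothesis to collapse the four-dimensional bilaplacian onto a single slice, where everything becomes classical planar harmonic analysis. Writing $x=\alpha+I\beta$ with $I\in\mathbb{S}$, $\alpha\in\mathbb{R}$ and $\beta=|\mathrm{Im}(x)|\ge 0$, the slice preserving assumption forces $F_{1},F_{2}$ to be real valued, so that $|f(x)|^{2}=F_{1}(\alpha,\beta)^{2}+F_{2}(\alpha,\beta)^{2}$ is \emph{independent of the imaginary unit} $I$. Hence $u:=\log|f|^{2}$ is a function of the two real variables $(\alpha,\beta)$ alone; equivalently, $u$ is constant on each sphere $\mathbb{S}_{\alpha+I\beta}$.

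First I would record two facts about $u$. On the one hand, on $\Omega_{D}\setminus\mathcal{ZP}(f)$ the function $f$ is regular and zero-free, which by holomorphy of the stem function together with $F_{1},F_{2}$ real valued means that $\alpha+i\beta\mapsto F_{1}(\alpha,\beta)+iF_{2}(\alpha,\beta)$ is a holomorphic, non-vanishing planar function; therefore $u=2\log|F_{1}+iF_{2}|$ is harmonic for the two-dimensional Laplacian $\Delta_{2}:=\partial_{\alpha}^{2}+\partial_{\beta}^{2}$, i.e. $\Delta_{2}u=0$. On the other hand, since $u$ depends only on $(\alpha,\beta)$ and $\beta$ is the radial coordinate of the $\mathbb{R}^{3}$ spanned by $i,j,k$, the usual $\mathbb{R}^{4}$ Laplacian splits as $\Delta=\partial_{\alpha}^{2}+\partial_{\beta}^{2}+\tfrac{2}{\beta}\partial_{\beta}=\Delta_{2}+\tfrac{2}{\beta}\partial_{\beta}$ on such functions. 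Combining the two, $\Delta u=\tfrac{2}{\beta}\partial_{\beta}u$.

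The heart of the argument is then to check that $\Delta(\Delta u)=0$. Set $P:=\partial_{\beta}u$. Because $\Delta_{2}$ commutes with $\partial_{\beta}$ and $\Delta_{2}u=0$, the function $P$ is again planar harmonic, $\Delta_{2}P=\partial_{\beta}(\Delta_{2}u)=0$, so it suffices to show that $\Delta u=\tfrac{2P}{\beta}$ is annihilated by $\Delta$. Applying $\Delta=\Delta_{2}+\tfrac{2}{\beta}\partial_{\beta}$ to $P/\beta$ and expanding the derivatives of the quotient, the contribution $\Delta_{2}P/\beta$ vanishes by harmonicity of $P$, and the two leftover pieces cancel exactly: $\Delta_{2}(P/\beta)=-\tfrac{2\partial_{\beta}P}{\beta^{2}}+\tfrac{2P}{\beta^{3}}$ is the opposite of $\tfrac{2}{\beta}\partial_{\beta}(P/\beta)=\tfrac{2\partial_{\beta}P}{\beta^{2}}-\tfrac{2P}{\beta^{3}}$. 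Hence $\Delta(P/\beta)=0$ and therefore $\Delta^{2}u=\Delta\bigl(\tfrac{2P}{\beta}\bigr)=0$ on $\Omega_{D}\setminus\mathcal{ZP}(f)$.

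The only delicate point is the behaviour along the real axis $\{\beta=0\}$, where the factor $1/\beta$ is formally singular. Here I would invoke the even--odd character of the stem function: $F_{1}$ is even and $F_{2}$ is odd in $\beta$, so $u$ is even, $P=\partial_{\beta}u$ is odd and vanishes to first order at $\beta=0$, which makes $P/\beta$ and all the expressions above genuinely smooth there. I expect this removable-singularity bookkeeping, together with the clean reduction to the planar identity $\Delta_{2}u=0$, to be the only real subtlety; once those are in place the conclusion follows from the short explicit cancellation above.
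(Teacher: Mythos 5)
Your proof is correct, but it follows a genuinely different route from the paper's. The paper works with the Cauchy--Fueter factorization $\Delta=\mathcal{D}_{CF}\overline{\mathcal{D}}_{CF}$: using $\partial_{s}|f|^{2}=0$ and Perotti's identities it computes $\overline{\mathcal{D}}_{CF}\log|f|^{2}=2(\partial_{c}f)f^{c}/|f|^{2}$, verifies by a direct $\overline{\partial}_{c}$ computation that this quotient is regular, and then concludes from the biharmonicity of regular functions ($\Delta\mathcal{D}_{CF}g=-2\Delta\partial_{s}g=0$). You instead exploit the axial symmetry of $|f|^{2}=F_{1}^{2}+F_{2}^{2}$ to reduce everything to the half-plane: $\log|F_{1}+iF_{2}|$ is planar harmonic because the stem function with real components is an ordinary holomorphic map, and the $\mathbb{R}^{4}$-Laplacian on functions of $(\alpha,\beta)$ is the axially symmetric operator $\Delta_{2}+\tfrac{2}{\beta}\partial_{\beta}$, after which the cancellation in $\Delta(P/\beta)$ is an elementary two-line computation. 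Your route is more classical and self-contained (no slice-operator calculus needed, only 2D potential theory plus the parity argument at $\beta=0$, which you rightly flag and which is handled by the even--odd character of $(F_{1},F_{2})$ together with real-analyticity of regular functions). What it does not produce, and what the paper's computation is really after, is the explicit closed form $\Delta\log|f|=-2\,\partial_{s}\bigl((\partial_{c}f)f^{c}\bigr)/|f|^{2}$ of Corollary \ref{laplog}, which is reused later (e.g.\ in Lemma \ref{laplmoebius} and in the Jensen formula); your identity $\Delta u=\tfrac{2}{\beta}\partial_{\beta}u$ is the real-variable shadow of that formula and could serve the same purpose, but only after translating it back into the slice-function language. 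Both arguments rely on the same structural fact, namely that $|f|^{2}$ is constant on the spheres $\mathbb{S}_{x}$, which is exactly where the slice preserving hypothesis enters and why the statement fails for general semiregular functions (Remark \ref{counterexample}).
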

\begin{proof}
Since, as we have already mentioned $\Delta=\mathcal{D}_{CF}\overline{\mathcal{D}}_{CF}$, if we prove that $\overline{\mathcal{D}}_{CF}\log|f|^{2}$ is
a regular function outside $\mathcal{ZP}(f)$, then we have the thesis.
Now,
\begin{equation*}
\overline{\mathcal{D}}_{CF}\log|f|^{2}=\frac{1}{|f|^{2}}\overline{\mathcal{D}}_{CF}|f|^{2},
\end{equation*}
but since $f$ is slice preserving, then 
$|f|^{2}=f f^{c}=f* f^{c}=\mathcal{I}((F_{1}+\imath F_{2})(F_{1}-\imath F_{2}))=F_{1}^{2}+F_{2}^{2}$
is a slice preserving function such that
$\partial_{s}|f|^{2}=0$. Therefore, by Theorem \ref{agacse}, $\overline{\mathcal{D}}_{CF}|f|^{2}=2\partial_{c}|f|^{2}=2\partial_{c}f  f^{c}$.
At this point the thesis follows from the next computation:
\begin{equation*}
\bar\partial_{c}\left(\frac{(\partial_{c}f) f^{c}}{|f|^{2}}\right)=-\frac{1}{|f|^{4}}f(\overline{\partial_{c}}f^{c})(\partial_{c}f) f^{c}+\frac{1}{|f|^{2}}(\partial_{c}f)(\overline{\partial_{c}}f^{c})=0.
\end{equation*}
\end{proof}

\begin{definition}
Given $f:\Omega\rightarrow \mathbb{H}$ of class $\mathcal{C}^{\infty}$ we say that
$f$ has \textit{log-biharmonic modulus} if
$$\Delta^{2}\log|f|\equiv 0.$$
\end{definition}

\begin{corollary}\label{laplog}
Let $f:\Omega_{D}\rightarrow\mathbb{H}$ be a slice preserving semiregular function.
Then, on $x\in\Omega_{D}\setminus \mathcal{ZP}(f)$ the following formula holds:
\begin{equation}\label{logbislicepres}
\Delta\log|f|=\frac{1}{2}\Delta\log|f|^{2}=\frac{1}{2}\mathcal{D}_{CF}\left(2\frac{(\partial_{c}f) f^{c}}{|f|^{2}}\right)=-2\frac{\partial_{s}((\partial_{c}f) f^{c})}{|f|^{2}}.
\end{equation}
In particular, if $f(x)=x$, we get that 
\begin{equation*}
\Delta\log|x|=2/|x|^{2}\quad \forall\, x\neq 0.
\end{equation*}
\end{corollary}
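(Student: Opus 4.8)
The plan is to chain the four expressions in \eqref{logbislicepres} from left to right, reusing the computations already carried out in the proof of Theorem \ref{slicepreslogbi}. The first equality is immediate: since $\log|f|=\tfrac12\log|f|^{2}$ and $\Delta$ is $\mathbb{R}$-linear, we get $\Delta\log|f|=\tfrac12\Delta\log|f|^{2}$. For the second equality I would write $\Delta=\mathcal{D}_{CF}\overline{\mathcal{D}}_{CF}$ and recall from the proof of Theorem \ref{slicepreslogbi} that, on $\Omega_{D}\setminus\mathcal{ZP}(f)$,
\[
\overline{\mathcal{D}}_{CF}\log|f|^{2}=\frac{1}{|f|^{2}}\overline{\mathcal{D}}_{CF}|f|^{2}=\frac{1}{|f|^{2}}\,2(\partial_{c}f)f^{c}=2\,\frac{(\partial_{c}f)f^{c}}{|f|^{2}},
\]
where the middle step uses that $|f|^{2}$ is slice preserving with $\partial_{s}|f|^{2}=0$, so that $\overline{\mathcal{D}}_{CF}|f|^{2}=2\partial_{c}|f|^{2}=2(\partial_{c}f)f^{c}$ by Theorem \ref{agacse}. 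Applying $\mathcal{D}_{CF}$ and multiplying by $\tfrac12$ yields the second equality.

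The third equality is the heart of the argument. Set $g:=\dfrac{(\partial_{c}f)f^{c}}{|f|^{2}}$. In the proof of Theorem \ref{slicepreslogbi} it was shown that $\overline{\partial}_{c}g=0$ on $\Omega_{D}\setminus\mathcal{ZP}(f)$, so $g$ is regular there; hence Theorem \ref{agacse} applies and gives $\mathcal{D}_{CF}g=-2\partial_{s}g$. It remains to identify $\partial_{s}g$. Since $f$ is slice preserving, so are $\partial_{c}f$, $f^{c}$, $|f|^{2}$ and $1/|f|^{2}$, the latter having vanishing spherical derivative and spherical value equal to itself. Writing $g$ as the product of $1/|f|^{2}$ with $(\partial_{c}f)f^{c}$ and applying the Leibniz rule of Remark \ref{leibnizspherical} for $\partial_{s}$ (the products here coincide with slice products because the left factor is slice preserving), the term containing $\partial_{s}(1/|f|^{2})$ drops out and we are left with $\partial_{s}g=\dfrac{\partial_{s}\big((\partial_{c}f)f^{c}\big)}{|f|^{2}}$. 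Therefore $\tfrac12\mathcal{D}_{CF}\big(2g\big)=\mathcal{D}_{CF}g=-2\,\dfrac{\partial_{s}\big((\partial_{c}f)f^{c}\big)}{|f|^{2}}$, which is the last expression in \eqref{logbislicepres}.

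Finally, for $f(x)=x$ I would specialize: $\partial_{c}f=1$, $f^{c}=x^{c}$ and $|f|^{2}=|x|^{2}$, so $(\partial_{c}f)f^{c}=x^{c}$. A direct computation from the definition of the spherical derivative gives $\partial_{s}(x^{c})=\tfrac12 Im(x)^{-1}(x^{c}-x)=-1$, whence $\Delta\log|x|=-2\,\partial_{s}(x^{c})/|x|^{2}=2/|x|^{2}$ for all $x\neq0$ (consistent with the classical value $(n-2)/|x|^{2}$ in $\mathbb{R}^{n}$ at $n=4$). The only genuine subtlety, and the step I would be most careful about, is the regularity of the auxiliary function $g$ together with the correct bookkeeping of the spherical Leibniz rule; everything else is the linearity of $\Delta$ and the substitution of the identities already established in Theorem \ref{slicepreslogbi} and Theorem \ref{agacse}.
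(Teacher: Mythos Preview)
Your proof is correct and follows essentially the same approach as the paper, which simply refers back to equation \eqref{cfslice} and the computations already done in the proof of Theorem \ref{slicepreslogbi}. You have supplied the details the paper leaves implicit: verifying that $g=(\partial_{c}f)f^{c}/|f|^{2}$ is a slice function so that $\mathcal{D}_{CF}g=2\overline{\partial}_{c}g-2\partial_{s}g=-2\partial_{s}g$ applies, and then extracting the factor $1/|f|^{2}$ from $\partial_{s}g$ via the spherical Leibniz rule using $\partial_{s}(1/|f|^{2})=0$.
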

\begin{proof}
The thesis follows thanks to the particular form that the Cauchy-Fueter  operators take in the
setting of slice functions (see equation \eqref{cfslice}), and from the computations 
in the proof of Theorem \ref{slicepreslogbi}.
\end{proof}

\begin{remark}\label{generalremarks}
A direct consequence of the previous theorem is the fact that, for any $x\in\mathbb{H}\setminus \{0\}$,
\begin{equation*}
\Delta^{2}\log|x|=0.
\end{equation*}
Starting from this, for any orthogonal transformation or translation $T$ of $\mathbb{R}^{4}$,
since $\Delta(f\circ T)=(\Delta f)\circ T$ (see \cite{axler}, Chapter 1), we also have that
$\Delta^{2}(f\circ T)=(\Delta^{2} f)\circ T$.
Therefore, for any fixed $q_{0}\in\mathbb{H}$, the function $\log|x-q_{0}|$ is biharmonic
for any $x\in\mathbb{H}\setminus\{q_{0}\}$.

Moreover, since $\log(ab)=\log a+\log b$ over the reals, then for any  slice preserving regular function $f$ and for
any quaternion $q_{0}$, the function $|f*(x-q_{0})|$ is log-biharmonic outside of its zeros.

Furthermore, if we set $C:\mathbb{H}\rightarrow\mathbb{H}$, to be the map,
such that $C(x)=x^{c}$, then again we have, by simple computations, that $\Delta (f\circ C)=(\Delta(f))\circ C$
and so $\Delta^{2}(f\circ C)=(\Delta^{2} f)\circ C$. 

\end{remark}

From the previous remark, it makes sense to state the following well known result (see, for instance, \cite{polyharmonic, mitrea}).

\begin{theorem}[Fundamental solution for the bilaplacian in $\mathbb{H}$]\label{delta}

The following equality holds:
\begin{equation*}
\Delta^{2}\left(-\frac{1}{48}\log|x|\right)=\delta_{0},\,\,\,\mbox{for }x\in\mathbb{H},
\end{equation*}
where $\delta_{0}$ denotes the Dirac measure centered in zero.
\end{theorem}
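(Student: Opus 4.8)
The plan is to read $-\tfrac{1}{48}\log|x|$ as a distribution on $\mathbb{R}^{4}\cong\mathbb{H}$ and to compute its bilaplacian by testing against an arbitrary $\phi\in\mathcal{C}^{\infty}_{c}(\mathbb{R}^{4})$. Since $\log|x|$ is locally integrable on $\mathbb{R}^{4}$ (near the origin it is controlled by the integrable radial weight $r^{3}\log r$), it defines a distribution and $\langle\Delta^{2}\log|x|,\phi\rangle=\int_{\mathbb{R}^{4}}\log|x|\,\Delta^{2}\phi\,dx$ is meaningful. By Remark~\ref{generalremarks} we already know $\Delta^{2}\log|x|=0$ on $\mathbb{R}^{4}\setminus\{0\}$, so the distribution $\Delta^{2}\log|x|$ is supported at the origin. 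A distribution supported at a point is a finite combination of $\delta_{0}$ and its derivatives; the invariance of $\log|x|$ under $x\mapsto -x$ kills the odd-order derivatives, and the explicit boundary computation below will show that in fact only $\delta_{0}$ itself survives.

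To isolate that term I would excise a small ball: write $\int_{\mathbb{R}^{4}}\log|x|\,\Delta^{2}\phi\,dx=\lim_{\varepsilon\to 0}\int_{\mathbb{R}^{4}\setminus\mathbb{B}_{\varepsilon}}\log|x|\,\Delta^{2}\phi\,dx$ and apply the second Green identity for the bilaplacian on $\mathbb{R}^{4}\setminus\mathbb{B}_{\varepsilon}$. There the volume term vanishes because $\log|x|$ is biharmonic, and the integral reduces to four surface integrals on $\partial\mathbb{B}_{\varepsilon}$ pairing $\phi,\ \partial_{n}\phi,\ \Delta\phi,\ \partial_{n}\Delta\phi$ against $\partial_{n}\Delta\log|x|,\ \Delta\log|x|,\ \partial_{n}\log|x|,\ \log|x|$, with $n$ the outward normal of $\mathbb{R}^{4}\setminus\mathbb{B}_{\varepsilon}$. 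Here Corollary~\ref{laplog}, i.e. $\Delta\log|x|=2/|x|^{2}$, lets me evaluate all the radial derivatives explicitly on $\partial\mathbb{B}_{\varepsilon}$.

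Weighing each of these four terms against the surface measure of the $3$-sphere, which is of size $2\pi^{2}\varepsilon^{3}$, three of them are $O(\varepsilon)$ or $O(\varepsilon^{2}\log\varepsilon)$ and drop out in the limit; the only surviving contribution is $-\int_{\partial\mathbb{B}_{\varepsilon}}\phi\,\partial_{n}\Delta\log|x|\,d\sigma$, and since $\partial_{n}\Delta\log|x|$ is a fixed negative power of $\varepsilon$ on $\partial\mathbb{B}_{\varepsilon}$, its limit is a universal constant times $\phi(0)$. This identifies $\Delta^{2}\log|x|$ with a constant multiple of $\delta_{0}$, the constant being dictated entirely by the surface measure $2\pi^{2}$ of the unit $3$-sphere, and the normalizing factor in front of $\log|x|$ is then the reciprocal chosen so that the right-hand side is exactly $\delta_{0}$. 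Equivalently, one may iterate: the identity $\Delta\log|x|=2/|x|^{2}$ also holds distributionally (both sides are locally integrable, and the same excision shows no point mass is created), after which the classical computation of $\Delta(|x|^{-2})$ as the Newtonian kernel of $\mathbb{R}^{4}$ produces a multiple of $\delta_{0}$ that, fed back, gives the result.

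The local integrability statements and the term-by-term $\varepsilon$-estimates are routine. The one delicate point, and the heart of the argument, is the boundary analysis: justifying the passage of $\Delta^{2}$ onto the test function through the second Green identity across the singularity, verifying that exactly one of the four surface terms has a nonzero limit, and reading off the precise constant from $|S^{3}|=2\pi^{2}$, which is what pins down the coefficient appearing in the statement.
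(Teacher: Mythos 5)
Your route is genuinely different from the paper's: the paper regularizes, computing $\Delta^{2}\log(|x|^{2}+\epsilon^{2})=-96\,\epsilon^{4}(|x|^{2}+\epsilon^{2})^{-4}$ in radial coordinates and letting $\epsilon\to 0$ through an approximate--identity argument, whereas you excise a small ball and push $\Delta^{2}$ onto the test function via the second Green identity for the bilaplacian. Your scheme is the classical one and is structurally sound: local integrability of $\log|x|$ in $\mathbb{R}^{4}$, biharmonicity off the origin (Remark~\ref{generalremarks}), four boundary terms on $\partial\mathbb{B}_{\varepsilon}$ of which only $-\int_{\partial\mathbb{B}_{\varepsilon}}\phi\,\partial_{n}\Delta\log|x|\,d\sigma$ survives in the limit --- all of that is correct, and the parity remark disposing of derivatives of $\delta_{0}$ is a nice touch.

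The gap is exactly the one step you defer. Saying that ``the normalizing factor in front of $\log|x|$ is then the reciprocal chosen so that the right-hand side is exactly $\delta_{0}$'' is not a verification that this reciprocal equals $-1/48$; it is the whole content of the statement. If you evaluate your surviving term, the outward normal of $\mathbb{R}^{4}\setminus\mathbb{B}_{\varepsilon}$ on the inner boundary is $-\hat{r}$, so $\partial_{n}\Delta\log|x|=-\partial_{r}(2r^{-2})=4\varepsilon^{-3}$ there, and since $|\partial\mathbb{B}_{\varepsilon}|=2\pi^{2}\varepsilon^{3}$,
\begin{equation*}
-\int_{\partial\mathbb{B}_{\varepsilon}}\phi\,\partial_{n}\Delta\log|x|\,d\sigma=-\frac{4}{\varepsilon^{3}}\int_{\partial\mathbb{B}_{\varepsilon}}\phi\,d\sigma\;\longrightarrow\;-8\pi^{2}\phi(0).
\end{equation*}
Thus your argument proves $\Delta^{2}\log|x|=-8\pi^{2}\delta_{0}$, i.e.\ the normalizing constant is $-1/(8\pi^{2})$, not $-1/48$; your alternative iteration forces the same value, since $\Delta(|x|^{-2})=-4\pi^{2}\delta_{0}$ in $\mathbb{R}^{4}$. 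So as written your proposal does not close the proof of the statement with coefficient $-1/48$ --- carried out honestly, it contradicts it. (The discrepancy is traceable to the paper's own proof: the boxed Claim there requires $\int_{\mathbb{R}^{4}}\varphi=1$, but $\varphi(x)=(1+|x|^{2})^{-4}$ has total integral $\pi^{2}/6$; normalizing turns $-96$ into $-16\pi^{2}$ for $\log|x|^{2}$, i.e.\ $-8\pi^{2}$ for $\log|x|$, in agreement with your boundary computation and with the standard fundamental solution of the bilaplacian in $\mathbb{R}^{4}$.)
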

\begin{proof}
The proof is well known and quite standard, however, for sake of completeness
and to justify the coefficient $-48$ appearing in our formula, we will show the 
main steps. First of all, notice that $\log|x|^{2}$ is a radial function, therefore it is useful
to pass to 4D-spherical coordinates $(r,\vartheta)=(r,\vartheta_{1},\vartheta_{2},\vartheta_{3})$, where $r=|x|$.
In these coordinates the laplacian is of the form
\begin{equation*}
\Delta=\frac{\partial^{2}}{\partial r^{2}}+\frac{3}{r}\frac{\partial}{\partial r}+\mathcal{L}(\vartheta),
\end{equation*}
where $\mathcal{L}(\vartheta)$ is the \textit{angular} part of the laplacian. 
The main idea of the proof is to apply the following corollary of the dominated
convergence theorem:\\

\framebox{\parbox{\dimexpr\linewidth-2\fboxsep-2\fboxrule}{\itshape%
\noindent\textbf{Claim}:\\
If $\varphi:\mathbb{R}^{4}\rightarrow \mathbb{R}$ is any positive function such that $\int_{\mathbb{R}^{4}}\varphi=1$, then the family of functions depending on $\epsilon$, 
\begin{equation*}
\varphi_{\epsilon}(x)=\frac{1}{\epsilon^{4}}\varphi\left(\frac{x}{\epsilon}\right),
\end{equation*}
is such that $\int_{\mathbb{R}^{4}}\varphi_{\epsilon}=1$ for any $\epsilon$, and converges
in the sense of distributions to the Dirac delta $\delta_{0}$ for $\epsilon\to 0$.}}

\vspace{0.3cm}
\noindent We will now fix any $\epsilon\in\mathbb{R}$ and compute 
\begin{equation*}
\Delta^{2}\log(|x|^{2}+\epsilon^{2})=\left(\frac{\partial^{2}}{\partial r^{2}}+\frac{3}{r}\frac{\partial}{\partial r}\right)^{2}\log(|x|^{2}+\epsilon^{2}).
\end{equation*}
After standard computations, we get:
\begin{equation*}
\Delta \log(|x|^{2}+\epsilon^{2})=4\frac{r^{2}+2\epsilon^{2}}{(r^{2}+\epsilon^{2})^{2}}
\end{equation*}
\begin{equation*}
\Delta^{2}\log(|x|^{2}+\epsilon^{2})=\Delta\left(4\frac{r^{2}+2\epsilon^{2}}{(r^{2}+\epsilon^{2})^{2}}
\right)=-96\frac{\epsilon^{4}}{(r^{2}+\epsilon^{2})^{4}}=\frac{-96}{\epsilon^{4}}\frac{1}{(\frac{r^{2}}{\epsilon^{2}}+1)^{4}}.
\end{equation*}
If we define $\varphi(|x|):=(|x|^{2}+1)^{-4}$, this is an integrable function for which, we can apply the previous Claim and so,
\begin{equation*}
\Delta^{2}\log|x|^{2}=\lim_{\epsilon \to 0}\Delta^{2}\log(|x|^{2}+\epsilon^{2})=-96\delta_{0}.
\end{equation*}
Therefore, we obtain the thesis:
\begin{equation*}
\Delta^{2}\log|x|=-48\delta_{0}
\end{equation*}

\end{proof}

Since it will recur often, from now on, the coefficient $-48$ of the previous
theorem will be denoted in the following way:
\begin{equation*}
\gamma=-48
\end{equation*}

\begin{corollary}\label{translation}
For any $q\in\mathbb{H}$, the following formulas hold:
\begin{equation*}
\boxed{
\Delta^{2}\log|x-q|=\gamma\delta_{q}
}
\qquad 
\boxed{
\Delta^{2}\log|x^{c}-q|=\gamma\delta_{q^{c}}
}
\end{equation*}

Moreover, for any set $\{q_{k}\}_{k=1}^{N}\subset\mathbb{H}$ and any $\{a_{k}\}_{k=0}^{N}\subset\mathbb{H}\setminus\{0\}$ if we define
$f:\mathbb{H}\rightarrow\mathbb{H}\cup\{\infty\}$ to be the PQL function
defined as $f(x)=a_{0}(x-q_{1})^{M_{1}}a_{1}(x-q_{2})^{M_{2}}\cdots(x-q_{N})^{M_{N}}a_{N}$, with $M_{k}=\pm 1$, then
\begin{equation*}
\Delta^{2}\log|f(x)|=\gamma\sum_{k=1}^{N}M_{k}\delta_{q_{k}}.
\end{equation*}
\end{corollary}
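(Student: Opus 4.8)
The plan is to bootstrap everything from Theorem~\ref{delta}, which gives $\Delta^{2}\log|x|=\gamma\delta_{0}$, using the invariance properties of the bilaplacian collected in Remark~\ref{generalremarks} together with the multiplicativity of the quaternionic norm. The computations are essentially immediate once these two ingredients are in place, so the work is mostly in arranging them correctly at the level of distributions.

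First I would establish the two boxed identities. For $\Delta^{2}\log|x-q|=\gamma\delta_{q}$, I write $\log|x-q|=(\log|\cdot|)\circ\tau$ with $\tau(x)=x-q$ a translation of $\mathbb{R}^{4}$; by Remark~\ref{generalremarks} one has $\Delta^{2}[(\log|\cdot|)\circ\tau]=(\Delta^{2}\log|\cdot|)\circ\tau=\gamma(\delta_{0}\circ\tau)$, and pulling back $\delta_{0}$ by the translation $\tau$ produces exactly $\delta_{q}$, which gives the first identity. For the second identity the cleanest route is the pointwise equality $|x^{c}-q|=|(x^{c}-q)^{c}|=|x-q^{c}|$, valid because $|w|=|w^{c}|$ and $(x^{c}-q)^{c}=x-q^{c}$; hence $\log|x^{c}-q|=\log|x-q^{c}|$ as functions, and applying the first identity with $q$ replaced by $q^{c}$ yields $\Delta^{2}\log|x^{c}-q|=\gamma\delta_{q^{c}}$. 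Alternatively one may compose directly with the conjugation map $C(x)=x^{c}$, which is an orthogonal reflection of $\mathbb{R}^{4}$, and invoke the corresponding invariance recorded in Remark~\ref{generalremarks}.

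For the PQL statement the key algebraic observation is that $f$ is a \emph{genuine} pointwise product of quaternions, so by multiplicativity of the norm, for every $x\notin\{q_{1},\dots,q_{N}\}$,
\begin{equation*}
|f(x)|=|a_{0}|\prod_{k=1}^{N}|x-q_{k}|^{M_{k}}\,|a_{k}|,
\end{equation*}
where $|(x-q_{k})^{M_{k}}|=|x-q_{k}|^{M_{k}}$ for both $M_{k}=\pm1$. Taking logarithms turns the product into the sum
\begin{equation*}
\log|f(x)|=\log|a_{0}|+\sum_{k=1}^{N}\log|a_{k}|+\sum_{k=1}^{N}M_{k}\log|x-q_{k}|,
\end{equation*}
an equality of locally integrable functions on $\mathbb{H}$. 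Since the constant terms are annihilated by $\Delta^{2}$ and the operator is linear on distributions, applying $\Delta^{2}$ termwise and invoking the first boxed identity gives $\Delta^{2}\log|f(x)|=\gamma\sum_{k=1}^{N}M_{k}\delta_{q_{k}}$.

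The computational content is light; the point I expect to require the most care is the legitimacy of the termwise application of $\Delta^{2}$, that is, checking that each summand $\log|x-q_{k}|$ is locally integrable on $\mathbb{R}^{4}$ (near $q_{k}$ it behaves like $\log r$, and $\int_{0}^{1}|\log r|\,r^{3}\,dr<\infty$) so that $\log|f|$ genuinely defines a distribution and its bilaplacian splits over the finite sum. This is also where the possibly repeated points, and the coincidence of a zero with a pole when the same $q_{k}$ carries exponents of opposite sign, are handled automatically: the Dirac masses simply add with their signed multiplicities $M_{k}$, and coincident contributions cancel exactly as they do in the underlying sum of logarithms. The only genuinely quaternion-specific ingredient is the multiplicativity $|pq|=|p||q|$, which is available here precisely because the PQL product is the honest pointwise product rather than the slice product.
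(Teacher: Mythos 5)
Your proposal is correct and follows essentially the same route as the paper: the boxed identities via Theorem~\ref{delta} together with the translation/conjugation invariance of $\Delta^{2}$ from Remark~\ref{generalremarks}, and the PQL formula via multiplicativity of the quaternionic norm and additivity of the logarithm. You simply spell out the local integrability of $\log|x-q_{k}|$ and the termwise application of $\Delta^{2}$, which the paper leaves implicit.
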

\begin{proof}
The first formula follows from Theorem \ref{delta} and Remark \ref{generalremarks}.   The second one from the first one plus logarithm general properties.
\end{proof}
This corollary and Remark \ref{generalremarks} tell us that the whole theory
can be generalized to \textit{anti}-regular functions and to the analogous of PQL functions in which the variable $x^{c}$ appears accompanied to $x$.


\begin{example}
In the previous corollary, if $a,b,c,d$ are quaternions, such that $a$ or $c$ are not simultaneously zero,
$\begin{bmatrix}  
                                                                                          a & b \\
                                                                                          c & d \\
                                                                                      \end{bmatrix} 
                                                                                      \in Sp(1,1)$
and $g$ is the M\"{o}bius function $g(x)=(ax+b)(cx+d)^{-1}$, then
\begin{equation*}
\frac{1}{\gamma}\Delta^{2}\log|g|=\delta_{-a^{-1}b}-\delta_{-c^{-1}d}.
\end{equation*}
\end{example}


\begin{remark}\label{counterexample}
In general, given a semiregular function $f$, its modulus is not log-biharmonic. In fact, given two distinct non-real quaternions $q_{0}, q_{1}$ and $q_{1}\neq  q_{0}^{c}$, the regular polynomial $P_{q_{0},q_{1}}$ defined in Example \ref{polexa},
has not a log-biharmonic modulus, i.e. for $x\notin\{ q_{0}$, $(q_{1}- q_{0}^{c})^{-1}q_{1}(q_{1}- q_{0}^{c})\}$, in general, we have that
\begin{equation*}
\Delta^{2}\log|P_{q_{0},q_{1}}(x)|\neq 0
\end{equation*}
In particular, we have computed the previous quantity in two particular but significative cases. If $q_{0}=i$ and $q_{1}=j$, then $P_{i,j}$
has one isolated zero on $\mathbb{S}$ and is nowhere else zero. 
It holds
\begin{equation*}
\Delta^{2}\log|P_{i,j}(x)|_{|{x=0}}=64.
\end{equation*}
If, instead, $q_{0}=i$ and $q_{1}=2i$, then $P_{i,2i}$ has two isolated zeros on $\mathbb{C}_{i}$ and is such that $P_{i,2i}(\mathbb{C}_{i})\subseteq\mathbb{C}_{i}$. 
In this particular case $P_{i,2i}(x)=(x-i)(T_{(x-i)}(x)-2i)$ and the function $T_{(x-i)}$ restricted only to $\mathbb{C}_{i}$ is equal to the identity, that is, 
for any $z\in\mathbb{C}_{i}$,
\begin{equation*}
P_{i,2i}(z)=(z-i)(z-2i)
\end{equation*}

Nevertheless,
even this function has not log-biharmonic modulus outside of its zeros, in fact,
even if $0\in\mathbb{C}_{i}$,
\begin{equation*}
\Delta^{2}\log|P_{i,2i}(x)|_{|{x=0}}=-72.
\end{equation*}
These two quantities were computed with the help of the software Mathematica 10 using, instead of the quaternionic variable, its four real coordinates, 
i.e. if $x=x_{0}+x_{1}i+x_{2}j+x_{3}k$, then
\begin{align*}
P_{i,j}(x)&=x_{0}^{2}-x_{1}^{2}-x_{2}^{2}-x_{3}^{2}+x_{1}+x_{2}+(2x_{0}x_{1}-x_{0}+x_{3})i+\\
&+(2x_{0}x_{2}-x_{0}-x_{3})j+(2x_{0}x_{3}-x_{1}+x_{2}+1)k,\\
P_{i,2i}(x)&=x_{0}^{2}-x_{1}^{2}-x_{2}^{2}-x_{3}^{2}-2+3x_{1}+(2x_{0}x_{1}-3x_{0})i+\\
&+(2x_{0}x_{2}-3x_{3})j+(2x_{0}x_{3}+3x_{2})k.
\end{align*}
Hence
\begin{align*}
|P_{i,j}(x)|^{2}&=(x_{0}^{2}-x_{1}^{2}-x_{2}^{2}-x_{3}^{2}+x_{1}+x_{2})^{2}+(2x_{0}x_{1}-x_{0}+x_{3})^{2}+\\
&+(2x_{0}x_{2}-x_{0}-x_{3})^{2}+(2x_{0}x_{3}-x_{1}+x_{2}+1)^{2},\\
|P_{i,2i}(x)|^{2}&=(x_{0}^{2}-x_{1}^{2}-x_{2}^{2}-x_{3}^{2}-2+3x_{1})^{2}+(2x_{0}x_{1}-3x_{0})^{2}+\\
&+(2x_{0}x_{2}-3x_{3})^{2}+(2x_{0}x_{3}+3x_{2})^{2}.
\end{align*}

\end{remark}

After all these remarks, it seems really interesting to study the log-biharmonicity
of the modulus of slice preserving regular functions. The following result goes in this direction 
and gives new genuine information on a class of regular functions.

\begin{theorem}\label{lebesgue}
For any $p\in\mathbb{H}\setminus\mathbb{R}$,
\begin{equation*}
\boxed{
\frac{1}{\gamma}\Delta^{2}\log|(x-p)^{s}|=\mathcal{L}_{\mathbb{S}_{p}},
}
\end{equation*}
where $\mathcal{L}_{\mathbb{S}_{p}}$ denotes the Lebesgue  measure of 
the sphere $\mathbb{S}_{p}$.
\end{theorem}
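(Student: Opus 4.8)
The plan is to put the identity into a translation‑normal form, localize the resulting distribution on the sphere, and then pin it down by a regularization in the spirit of the proof of Theorem~\ref{delta}.

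First I would normalize. Since $\Delta^{2}$ commutes with translations (Remark~\ref{generalremarks}), translating the variable by $Re(p)$ lets me assume $p$ is purely imaginary, say $p=Jb$ with $J\in\mathbb{S}$ and $b=|Im(p)|>0$; indeed $(x-p)^{s}=x^{2}-2Re(p)x+|p|^{2}$ becomes $y^{2}+b^{2}$ in the shifted variable $y=x-Re(p)$, and $\mathbb{S}_{p}$ becomes the sphere $\{Kb\mid K\in\mathbb{S}\}$ of purely imaginary quaternions of modulus $b$. Writing $x=x_{0}+\mathbf{x}$ with $s=|\mathbf{x}|$, a direct computation gives $|x^{2}+b^{2}|^{2}=(x_{0}^{2}-s^{2}+b^{2})^{2}+4x_{0}^{2}s^{2}=(x_{0}^{2}+(s-b)^{2})(x_{0}^{2}+(s+b)^{2})$, so $\log|x^{2}+b^{2}|$ depends only on $(x_{0},s)$ and is invariant under the rotations of $\mathbf{x}$, an $SO(3)$–action fixing the real axis. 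In particular $x_{0}^{2}+(s-b)^{2}=\mathrm{dist}(x,\mathbb{S}_{p})^{2}$, so $\log|x^{2}+b^{2}|$ carries only a logarithmic–distance singularity along $\mathbb{S}_{p}$ and is locally integrable; hence $T:=\Delta^{2}\log|x^{2}+b^{2}|$ is a well–defined distribution.

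Next I would localize and constrain $T$. By Theorem~\ref{slicepreslogbi} the slice preserving function $x^{2}+b^{2}$ has $\Delta^{2}\log|x^{2}+b^{2}|^{2}=0$ off $\mathbb{S}_{p}$, so $T$ is supported on $\mathbb{S}_{p}$; moreover $T$ inherits the $SO(3)$–invariance, and since that action is transitive on $\mathbb{S}_{p}$ the distribution is rotationally homogeneous along the sphere. To compute it I would regularize exactly as in Theorem~\ref{delta}: set $u_{\epsilon}=\tfrac12\log(|x^{2}+b^{2}|^{2}+\epsilon^{4})$, a smooth function converging to $\log|x^{2}+b^{2}|$, and compute $\Delta^{2}u_{\epsilon}$ explicitly, most conveniently through the cylindrical form $\Delta=\partial_{x_{0}}^{2}+\partial_{s}^{2}+\tfrac{2}{s}\partial_{s}$ afforded by the symmetry, or in the four real coordinates as was done for the examples in Remark~\ref{counterexample}. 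One then checks that $\Delta^{2}u_{\epsilon}$ is an explicit family concentrating on $\mathbb{S}_{p}$ and passes to the weak limit with the dominated–convergence Claim established inside the proof of Theorem~\ref{delta}, so that the limit is a genuine (order–zero) measure carried by $\mathbb{S}_{p}$.

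The normalization I would fix without further computation: since $\log|x^{2}+b^{2}|=2\log|x|+O(|x|^{-1})$ at infinity and $\tfrac1\gamma\log|x|$ is the fundamental solution of $\Delta^{2}$ (Theorem~\ref{delta}), the total mass of $T$ must equal $2\gamma$; combined with the rotational homogeneity of the limiting measure, this forces $\tfrac1\gamma T$ to be the uniform measure $\mathcal{L}_{\mathbb{S}_{p}}$ on $\mathbb{S}_{p}$, and undoing the translation gives the statement for arbitrary $p\in\mathbb{H}\setminus\mathbb{R}$. The genuinely delicate step is the passage to the limit in the regularization: unlike the point mass of Theorem~\ref{delta}, here $\mathbb{S}_{p}$ has codimension two and $\log|(x-p)^{s}|$ behaves like $\log\mathrm{dist}(x,\mathbb{S}_{p})$, so already the first Laplacian produces a surface layer along $\mathbb{S}_{p}$ (compare the explicit form of $\Delta\log|f|$ in Corollary~\ref{laplog}), and one must control exactly how the second Laplacian acts on that layer in order to recognize the concentrated limit as the stated sphere measure; this is precisely where the slice–regular structure, through Theorem~\ref{slicepreslogbi} and Corollary~\ref{laplog}, has to be used.
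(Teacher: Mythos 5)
Your route is genuinely different from the paper's. The paper restricts the distribution $\Delta^{2}\log|(x-p)^{s}|$ to each semislice $\mathbb{C}_{I}^{+}$, uses the factorization $(x-p)^{s}=(x-\tilde p)*(x-\tilde p^{c})$ with $\{\tilde p\}=\mathbb{S}_{p}\cap\mathbb{C}_{I}^{+}$ to reduce to the point formula $\Delta^{2}\log|x-\tilde p|=\gamma\delta_{\tilde p}$ of Corollary~\ref{translation}, and then integrates over $I\in\mathbb{S}$ by Fubini through the identification $\mathbb{H}\setminus\mathbb{R}\simeq\mathbb{C}_{I}^{+}\times\mathbb{S}$. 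You instead exploit the cylindrical symmetry, regularize, and try to pin the answer down by a total-mass count. Note that your identity $|x^{2}+b^{2}|^{2}=(x_{0}^{2}+(s-b)^{2})(x_{0}^{2}+(s+b)^{2})$ is precisely the modulus form of the paper's factorization, since $x_{0}^{2}+(s-b)^{2}=|x-\tilde p|^{2}$ and $x_{0}^{2}+(s+b)^{2}=|x-\tilde p^{c}|^{2}$ for the point $\tilde p$ of $\mathbb{S}_{p}$ lying on the semislice through $x$; pushed in that direction your normal form reconnects directly with the paper's argument.

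There is, however, a genuine gap. Your closing step --- rotational homogeneity plus total mass $2\gamma$ ``forces'' $\tfrac1\gamma T=\mathcal{L}_{\mathbb{S}_{p}}$ --- is only valid \emph{after} one knows that $T$ is an order-zero measure, and that is exactly the content of the computation you defer. An $SO(3)$-invariant distribution supported on the codimension-two sphere $\mathbb{S}_{p}$ need not be a measure: the normal bundle is spanned by $\partial_{x_{0}}$ and the radial direction $\partial_{s}$, both of which are $SO(3)$-invariant, so terms such as $\partial_{x_{0}}\mathcal{L}_{\mathbb{S}_{p}}$, $\partial_{s}\mathcal{L}_{\mathbb{S}_{p}}$ or their second derivatives are rotation-invariant and carry zero total mass, hence are invisible to both of your constraints. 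And such terms are exactly what appears a priori: distributionally, $\Delta\log|(x-p)^{s}|$ is the locally integrable function $4(|x|^{2}-b^{2})/|(x-p)^{s}|^{2}$ of Corollary~\ref{laplog} \emph{plus} a transversal surface layer on $\mathbb{S}_{p}$, and the second Laplacian of that layer is a second-order distribution whose singular part must be shown to cancel against the singular part coming from the locally integrable piece. The ``Claim'' inside the proof of Theorem~\ref{delta} does not do this for you: it is tailored to point concentration under the scaling $\epsilon^{-4}\varphi(x/\epsilon)$, whereas here the family $\Delta^{2}u_{\epsilon}$ concentrates transversally on a $2$-sphere and is not single-signed, so one must separately verify that its higher transversal moments vanish in the limit. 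Until that explicit computation is carried out, the proposal is a plan rather than a proof. Finally, be aware that the mass bookkeeping you propose yields total mass $2\gamma$ for $T$, which does not literally match the normalization of $\mathcal{L}_{\mathbb{S}_{p}}$ used in the paper's statement (where the limit measure is $\varphi\mapsto\gamma\int_{\mathbb{S}}\varphi(\alpha_{0}+I\beta_{0})\,d\sigma_{\mathbb{S}}$), so the convention for $\mathcal{L}_{\mathbb{S}_{p}}$ must be fixed before the constant can be checked.
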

\begin{proof}
Let $p=\alpha_{0}+I_{0}\beta_{0}$ be any non-real quaternion, then, for any $x\in\mathbb{H}\setminus\mathbb{S}_{p}$, by Theorem \ref{slicepreslogbi} we have,
\begin{equation*}
\Delta^{2}\log|(x-p)^{s}|=0.
\end{equation*}
Therefore the previous equation remains true if we restricts it to any semi-slice $\mathbb{C}_{I}^{+}$, that is, if $\mathbb{S}_{p}\cap\mathbb{C}_{I}^{+}=\{\tilde p\}$ then, for any $x\in\mathbb{C}_{I}^{+}\setminus\{\tilde p\}$,
\begin{equation}\label{eqcharpol}
(\Delta^{2}\log|(x-p)^{s}|) _{|{\mathbb{C}_{I}^{+}}}=0.
\end{equation}
Now, since $\tilde p\in\mathbb{S}_{p}$, then, $(x-p)^s=(x-\tilde{p})^s$,
therefore, 
\begin{equation*}
\Delta^{2}\log|(x-\tilde p)^s| =\Delta^{2}\log|x-\tilde p| +\Delta^{2}\log|T_{(x-\tilde p)}(x)-\tilde p^{c}| =\gamma\delta_{\tilde p} +\Delta^{2}\log|T_{(x-\tilde p)}(x)-\tilde p^{c}|.
\end{equation*}
The last equality, restricted to $\mathbb{C}_{I}^{+}$, gives

\begin{equation*}
(\Delta^{2}\log|(x-\tilde p)^s|) _{|{\mathbb{C}_{I}^{+}}}=\gamma\delta_{\tilde p} +(\Delta^{2}\log|T_{(x-\tilde p)}(x)-\tilde p^{c}|)_{|{\mathbb{C}_{I}^{+}}},
\end{equation*}
where the equality is in the sense of measures.
Taking into account equation~\eqref{eqcharpol}, we obtain the following
\begin{equation*}
(\Delta^{2}\log|(x-\tilde p)^s|) _{|{\mathbb{C}_{I}^{+}}}=\gamma\delta_{\tilde p}.
\end{equation*}

Recall that $(\mathbb{H}\setminus \mathbb{R})\simeq\mathbb{C}_{I}^{+}\times \mathbb{S}$. Denote
by $dz_{I}^{+}$ and $d\sigma_{\mathbb{S}}$ the standard surface measure of $\mathbb{C}_{I}^{+}$ and $\mathbb{S}$, respectively.
If we now take any real valued compactly supported $\mathcal{C}^{\infty}$ function $\varphi$,
we have that,
\begin{equation*}
\int_{\mathbb{H}}\Delta^{2}\log|(x-p)^s|\varphi(x)dx=\int_{\mathbb{S}}\left(\int_{\mathbb{C}_{I}^{+}}\Delta^{2}\log|(x-p)^s|\varphi(x)dz_{I}^{+}\right)d\sigma_{\mathbb{S}}=\gamma\int_{\mathbb{S}}\varphi(\alpha_{0}+I\beta_{0})d\sigma_{\mathbb{S}},
\end{equation*}
where the first equality holds thanks to Fubini's Theorem and the fact that in a neighborhood of the real line
the integrand is measurable and $\mathbb{R}$ has zero measure with respect
to the $4$-dimensional Lebesgue  measure. 
\end{proof}

We end this section with the following summarizing theorem, that allow us to
define the Riesz measure of a slice preserving semiregular function and of a 
PQL function.


\begin{theorem}[Riesz Measure]\label{riesz}
Let $\Omega$ be a domain of $\mathbb{H}$ and let $f:\Omega\rightarrow\hat{\mathbb{H}}$ be a quaternionic function of one quaternionic variable and let $\rho>0$ such that the ball $\overline{\mathbb{B}_{\rho}}\subset\Omega$ and such that $f(y)\neq 0, \infty$, for any $y\in\partial \mathbb{B}_{\rho}$.
\begin{enumerate}[label=(\roman*)]
\item If $f$ is a slice preserving semiregular function such that
 $\{r_{k}\}_{k=1,2,..}$, $\{p_{h}\}_{h=1,2,..}$ are the sets of its real zeros and poles, respectively, and  
$\{\mathbb{S}_{a_{i}}\}_{i=1,2,..}$, $\{\mathbb{S}_{b_{j}}\}_{j=1,2,..}$
are the sets of its spherical zeros and poles, respectively, everything repeated accordingly to their multiplicity, 
then, for any $x\in \mathbb{B}_{\rho}$,
\begin{equation*}
\frac{1}{\gamma}\Delta^{2}\log|f|=\sum_{|r_{k}|<\rho}\delta_{r_{k}}-\sum_{|p_{h}|<\rho}\delta_{p_{h}}+\sum_{|a_{i}|<\rho}\mathcal{L}_{\mathbb{S}_{a_{i}}}-\sum_{|b_{j}|<\rho}\mathcal{L}_{\mathbb{S}_{b_{j}}}.
\end{equation*}
\item  If $f$ is a PQL function
\begin{equation*}
f(x):=a_{0}\prod_{k=1}^{N}(x-q_{k})^{M_{k}}a_{k},
\end{equation*}
with $M_{k}=\pm 1$ and $|q_{k}|<\rho$, for any $k$, then,
\begin{equation*}
\frac{1}{\gamma}\Delta^{2}\log|f|=\sum_{k=1}^{N}M_{k}\delta_{q_{k}}.
\end{equation*}
\end{enumerate}
\end{theorem}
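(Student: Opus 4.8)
The plan is to treat the two cases separately, in each case reducing $\log|f|$ to a finite $\mathbb{R}$-linear combination of elementary logarithms whose bilaplacians have already been computed, and then to invoke the linearity of $\Delta^{2}$ in the sense of distributions. Nothing new has to be proved analytically; the work is in assembling Theorem \ref{delta}, Corollary \ref{translation}, Theorem \ref{lebesgue} and Theorem \ref{slicepreslogbi} correctly.

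For part (ii) I would start from the multiplicativity of the quaternionic modulus. Since $|f(x)|=|a_{0}|\,|x-q_{1}|^{M_{1}}|a_{1}|\cdots|x-q_{N}|^{M_{N}}|a_{N}|$, taking logarithms gives
$$\log|f(x)|=\log|a_{0}|+\sum_{k=1}^{N}\log|a_{k}|+\sum_{k=1}^{N}M_{k}\log|x-q_{k}|.$$
Applying $\Delta^{2}$ annihilates the constant terms and, by the first boxed formula of Corollary \ref{translation}, namely $\Delta^{2}\log|x-q_{k}|=\gamma\delta_{q_{k}}$, leaves $\Delta^{2}\log|f|=\gamma\sum_{k=1}^{N}M_{k}\delta_{q_{k}}$. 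Dividing by $\gamma$ and using $|q_{k}|<\rho$ for all $k$ yields the claim; this part is essentially a restatement of Corollary \ref{translation}.

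For part (i) the key ingredient is the boxed factorization for slice preserving semiregular functions valid on $\mathbb{B}_{\rho}$,
$$f_{|\mathbb{B}_\rho}(x)=x^{n}\Big(\prod_{r_{h}}(x-r_{h})^{n_{h}}\Big)\Big(\prod_{S_{k}}((x-q_{k})^{s})^{n_{k}}\Big)g(x),$$
where the integers $n,n_{h},n_{k}$ are positive at zeros and negative at poles, the products are finite by Corollaries \ref{zerocptcor} and \ref{polecptcor}, and $g$ is slice preserving regular with neither zeros nor poles in $\mathbb{B}_{\rho}$. Passing to $\log|\cdot|$ turns the product into the sum
$$\log|f|=n\log|x|+\sum_{h}n_{h}\log|x-r_{h}|+\sum_{k}n_{k}\log|(x-q_{k})^{s}|+\log|g|,$$
and I would then apply $\Delta^{2}$ term by term. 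The real-point terms are handled by Theorem \ref{delta} and Corollary \ref{translation}, contributing $\gamma\delta_{0}$ and $\gamma\delta_{r_{h}}$; the spherical terms are exactly the content of Theorem \ref{lebesgue}, contributing $\gamma\mathcal{L}_{\mathbb{S}_{q_{k}}}$; and $\Delta^{2}\log|g|$ vanishes because $g$ is slice preserving regular and zero/pole-free on $\mathbb{B}_{\rho}$, so Theorem \ref{slicepreslogbi} applies on all of $\mathbb{B}_{\rho}$ (together with $\log|g|=\tfrac12\log|g|^{2}$). Separating positive from negative exponents into zeros and poles and dividing by $\gamma$ reproduces the four-term formula.

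The only genuine care needed is the passage to distributions: each summand above is locally integrable on $\mathbb{B}_{\rho}$, so the expansion of $\log|f|$ is an equality of $L^{1}_{\mathrm{loc}}$ functions and $\Delta^{2}$ may be applied termwise as distributions; the boundary hypothesis $f(y)\neq 0,\infty$ on $\partial\mathbb{B}_{\rho}$ guarantees that no zero or pole lies on the bounding sphere, so the factorization cleanly separates the interior zeros and poles (recorded by the explicit factors) from the exterior ones (absorbed into $g$). The hardest point is therefore bookkeeping rather than analysis: matching the exponents $n_{h},n_{k}$ of the factorization with the convention ``repeated according to their multiplicity'' in the statement, so that each real point produces a Dirac delta and each sphere a Lebesgue measure, each with the correct sign and multiplicity.
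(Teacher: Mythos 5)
Your proposal is correct and follows exactly the route the paper intends: the paper's own proof is just the one-line remark that the theorem ``is a collection of Theorems \ref{delta}, \ref{lebesgue} and Corollary \ref{translation}'', and your argument fills in precisely those details (the boxed factorization on $\mathbb{B}_{\rho}$, termwise application of $\Delta^{2}$, and Theorem \ref{slicepreslogbi} to kill the zero-free factor $g$). No gaps; your expanded version is in fact more complete than what the paper writes down.
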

\begin{proof}
The proof of this theorem is just a collection of Theorems \ref{delta}, \ref{lebesgue}
and Remark \ref{translation}.
\end{proof}

The equalities in the previous theorem, must be interpreted in the sense of distributions. Therefore, if $\varphi$ is a $\mathcal{C}^{\infty}_{c}(\mathbb{B}_{\rho})$-function,
then, in case (i),
\begin{equation*}
\begin{array}{rcl}
\displaystyle\int_{\mathbb{B}_{\rho}}\displaystyle\frac{1}{\gamma}\Delta^{2}\log|f(x)|\varphi(x)dx & = & \displaystyle\sum_{|r_{k}|<\rho}\varphi(r_{k})-\displaystyle\sum_{|p_{h}|<\rho}\varphi(p_{h})+\\
& &\\
 & & +\displaystyle\sum_{|a_{i}|<\rho}\displaystyle\int_{\mathbb{S}}\varphi(\alpha_{i}+I\beta_{i})d\sigma_{\mathbb{S}}-\displaystyle\sum_{|b_{j}|<\rho}\displaystyle\int_{\mathbb{S}}\varphi(\alpha_{j}+I\beta_{j})d\sigma_{\mathbb{S}},
\end{array}
\end{equation*}
where, of course, $a_{i}=\alpha_{i}+K_{i}\beta_{i}$ and $b_{j}=\alpha_{j}+K_{j}\beta_{j}$ with $K_{i},K_{j}\in\mathbb{S}$, while in case (ii),
\begin{equation*}
\displaystyle\int_{\mathbb{B}_{\rho}}\displaystyle\frac{1}{\gamma}\Delta^{2}\log|f(x)|\varphi(x)dx =  \displaystyle\sum_{k=1}^{N}M_{k}\varphi(q_{k}).
\end{equation*}

\begin{remark}
Due to Remark \ref{counterexample}, the optimal family of semiregular functions
for which is possible to define the Riesz measure in the sense of the present paper is exactly the slice preserving one.
\end{remark}
\begin{remark}\label{mix1}
Thanks to the properties of the real logarithm, a mix of cases $(i)$ and $(ii)$ in the previous theorem, can be considered. If for instance, $f$ is a slice preserving 
semiregular function as in case (i) of Theorem \ref{riesz} and $q_{1},q_{2}\in \mathbb{B}_{\rho}$,
then, the function $h(x)=(x-q_{1})f(x)(x-q_{2})^{-2}$ is such that
\begin{equation*}
\frac{1}{\gamma}\Delta^{2}\log|h|=\frac{1}{\gamma}\Delta^{2}\log|f|+\delta_{q_{1}}-2\delta_{q_{2}}.
\end{equation*}
\end{remark}
\begin{remark}
Observe that two different functions can give rise to the same Riesz measure.
In fact, for instance, given $q_{1},q_{2}\in\mathbb{H}$, then, 
\begin{equation*}
\frac{1}{\gamma}\Delta^{2}\log|(x-q_{1})(x-q_{2})|=\frac{1}{\gamma}\Delta^{2}\log|(x-q_{2})(x-q_{1})|,
\end{equation*}
even if the two functions $(x-q_{1})(x-q_{2})$ and $(x-q_{2})(x-q_{1})$ are in general pointwise different.
\end{remark}

\begin{example}
To construct the Riesz measure of  the $\rho$-Blaschke $B_{\mathbb{S}_{a},\rho}$, we have just
to remember where its zero and pole are located. But this was observed in Remark \ref{zerobla}, therefore,
\begin{equation*}
\frac{1}{\gamma}\Delta^{2}\log|B_{\mathbb{S}_{a},\rho}|=\mathcal{L}_{\mathbb{S}_{\rho^{2}a^{-1}}}-\mathcal{L}_{\mathbb{S}_{a}}.
\end{equation*}

\end{example}


\section{Jensen formulas and corollaries}
In this section we will present an analogue of the Jensen formula for some classes of quaternionic functions, namely the same considered in Theorem \ref{riesz}.

From now on, $y=y_0+iy_1+jy_2+ky_3$ will be a new quaternionic variable that we will use when necessary.
Let $\mathbb{B}(x,\rho)$ denotes the euclidean ball centered in $x$ with radius $\rho$. Observe that if $x=0$, then $\mathbb{B}(x,\rho)=\mathbb{B}_{\rho}$.
If $\Omega$ is an open set, a necessary and sufficient condition for a function $u:\Omega\subset\mathbb{R}^{4}\rightarrow\mathbb{R}$ to be bihamonic, is to satisfy the following mean value property (see Theorem 7.24 of \cite{mitrea}):\\

\framebox{\parbox{\dimexpr\linewidth-2\fboxsep-2\fboxrule}{\itshape%
\noindent For any $x\in\Omega$ and for any $\rho>0$ such that $\overline{\mathbb{B}(x,\rho)}\subset\Omega$,
\begin{equation*}
u(x)=\frac{1}{|\partial \mathbb{B}(x,\rho)|}\int_{\partial \mathbb{B}(x,\rho)}u(y)d\sigma(y)-\frac{\rho^{2}}{8}\Delta u(x),
\end{equation*}}}\\

where $|\partial \mathbb{B}(x,\rho)|$ denotes the Lebesgue measure of
the boundary of the four dimensional ball of radius $\rho$.

We have seen that, a slice preserving semiregular function or a PQL function $f:\Omega\rightarrow\hat{\mathbb{H}}$, has log-biharmonic modulus outside
of its zeros and singularities, therefore, for any $x\in\Omega\setminus\mathcal{ZP}(f)$ and
for any $\rho>0$ such that $\overline{\mathbb{B}(x,\rho)}\subset\Omega$,
\begin{equation*}
\boxed{
\log|f(x)|=\frac{1}{|\partial \mathbb{B}(x,\rho)|}\int_{\partial \mathbb{B}(x,\rho)}\log |f(y)|d\sigma(y)-\frac{\rho^{2}}{8}\Delta\log |f(x)|.}
\end{equation*}

Starting from this property we are going to prove an analogue of the Jensen
formula. To do that, we need the following technical
lemma which gives new results on $\rho$-Blaschke factors.

%

\begin{lemma}\label{laplmoebius}
Let $\rho>0$, then
\begin{enumerate}
\item[(i)] if $a$ is any non-zero quaternion such that $|a|<\rho$ and $B_{a,\rho}^{p}$
denotes the punctual $\rho$-Blaschke function at $a$, then
\begin{equation*}
\Delta\log|B^{p}_{a,\rho} (x)|_{|{x=0}}=\frac{2}{\rho^{4}|a|^{2}}[|a|^{4}-\rho^{4}],
\end{equation*}
\item[(ii)] if $a$ is any non-real quaternion such that $|a|<\rho$ and $B_{\mathbb{S}_{a},\rho}$
denotes the regular $\rho$-Blaschke function at $a$, then
\begin{equation*}
\Delta\log|B_{\mathbb{S}_{a},\rho} (x)|_{|{x=0}}=\frac{2}{\rho^{4}|a|^{4}}[\rho^{4}-|a|^{4}](2|a|^{2}-(a+ a^{c})^{2})
\end{equation*}
\end{enumerate}
%
%
%
\end{lemma}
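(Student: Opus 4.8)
The plan is to reduce both identities to the single elementary fact, recorded in Corollary \ref{laplog} and Remark \ref{generalremarks}, that $\Delta\log|x-p| = 2/|x-p|^{2}$ for every $p\in\mathbb{H}$ and every $x\neq p$; in particular $\Delta\log|x-p|_{|x=0}=2/|p|^{2}$. Since over the reals $\log$ turns products into sums, and each factor appearing in these Blaschke functions is either a translated linear factor or a characteristic polynomial, the Laplacian of the log-modulus splits into contributions that can be treated one at a time.

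For part (i) I would first factor $\rho^{2}-xa^{c}=-(x-\rho^{2}(a^{c})^{-1})a^{c}$, so that
\[
\log|B^{p}_{a,\rho}(x)| = \log|a|-\log\rho+\log|x-\rho^{2}(a^{c})^{-1}|-\log|x-a|.
\]
The two constant terms vanish under $\Delta$, and applying the point-mass formula to the remaining terms at $x=0$, using $|\rho^{2}(a^{c})^{-1}|^{2}=\rho^{4}/|a|^{2}$, gives $\Delta\log|B^{p}_{a,\rho}|_{|x=0}=2|a|^{2}/\rho^{4}-2/|a|^{2}$, which is the claimed expression once everything is put over $\rho^{4}|a|^{2}$.

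For part (ii) I would start from the explicit slice preserving form already derived above, $B_{\mathbb{S}_{a},\rho}(x)=(\rho^{2}(x-a)^{s})^{-1}(x-\rho^{2}a^{-1})^{s}|a|^{2}$. Taking the log-modulus and discarding the real constants reduces the problem to computing the single quantity $\Psi(q):=\Delta\log|(x-q)^{s}|_{|x=0}$ for a non-real $q$, after which the answer is $\Psi(\rho^{2}a^{-1})-\Psi(a)$. The crux is the evaluation of $\Psi(q)$, and this is the step I expect to be the main obstacle, because $(x-q)^{s}$ is genuinely four-dimensional and, unlike a linear factor, its log-modulus is not a translate of $\log|x|$, so the point-mass shortcut does not apply directly.

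To compute $\Psi(q)$ I would write $(x-q)^{s}=x^{2}-sx+n$ with $s=q+q^{c}$ and $n=|q|^{2}$ real, split $x=x_{0}+\vec{x}$ into real and imaginary parts, and record the squared modulus as the radial function $\Phi(x_{0},r)=(x_{0}^{2}-r^{2}-sx_{0}+n)^{2}+(2x_{0}-s)^{2}r^{2}$ with $r=|\vec{x}|$. Using the four-dimensional radial Laplacian $\Delta=\partial_{x_{0}}^{2}+\partial_{r}^{2}+\tfrac{2}{r}\partial_{r}$ together with $\Delta\log\Phi=\Delta\Phi/\Phi-|\nabla\Phi|^{2}/\Phi^{2}$, a short evaluation of the first and second derivatives of $\Phi$ at the origin yields $\Psi(q)=\tfrac{1}{2}\Delta\log\Phi_{|0}=(2s^{2}-4n)/n^{2}$. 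Substituting $q=a$ and $q=\rho^{2}a^{-1}$, for the latter noting $|\rho^{2}a^{-1}|^{2}=\rho^{4}/|a|^{2}$ and $\rho^{2}a^{-1}+(\rho^{2}a^{-1})^{c}=\rho^{2}(a+a^{c})/|a|^{2}$, and subtracting gives, after factoring, the stated formula. As an alternative to coordinates one can feed $f=(x-q)^{s}$, for which $\partial_{c}f=2x-s$ and $f^{c}=f$, into the identity $\Delta\log|f|=-2\,\partial_{s}((\partial_{c}f)f^{c})/|f|^{2}$ of Corollary \ref{laplog}; this trades the radial computation for a spherical-derivative one, at the cost of passing to the continuous extension at the real point $x=0$.
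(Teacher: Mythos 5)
Your proposal is correct, and part (i) coincides with the paper's proof verbatim: both factor $\rho^{2}-xa^{c}=-(x-\rho^{2}(a^{c})^{-1})a^{c}$ and apply $\Delta\log|x-p|=2/|x-p|^{2}$ to the two linear factors. For part (ii) the skeleton is the same (split the log-modulus into two characteristic-polynomial terms and subtract), but the engine is genuinely different. The paper computes each term with the slice-function identity $\Delta\log|f|=-2\,\partial_{s}((\partial_{c}f)f^{c})/|f|^{2}$ of Corollary \ref{laplog}, i.e.\ via spherical derivatives and the Cauchy--Fueter calculus; you instead compute the single quantity $\Psi(q)=\Delta\log|(x-q)^{s}|_{|x=0}$ by elementary multivariable calculus in the radial coordinates $(x_{0},r)$, and then evaluate it at $q=a$ and $q=\rho^{2}a^{-1}$. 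I checked your numbers: with $s=q+q^{c}$, $n=|q|^{2}$ one indeed gets $\Phi(0,0)=n^{2}$, $|\nabla\Phi|^{2}_{|0}=4n^{2}s^{2}$, $\Delta\Phi_{|0}=8s^{2}-8n$, hence $\Psi(q)=(2s^{2}-4n)/n^{2}$, which agrees with the paper's $-\tfrac{2}{|q|^{4}}(2|q|^{2}-(q+q^{c})^{2})$; and $\Psi(\rho^{2}a^{-1})-\Psi(a)=(2s^{2}-4n)(n^{2}-\rho^{4})/(\rho^{4}n^{2})$ reproduces the stated formula. Your route buys self-containedness (no appeal to Theorem \ref{agacse} or to the continuity of the spherical derivative across the real axis, which the paper's method implicitly needs at $x=0$) and unifies the two terms into one function evaluated at two points; the paper's route stays inside the slice-regular calculus and avoids coordinates, which is more in the spirit of the rest of the article. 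The only point to make explicit in a final write-up is the evaluation of the singular term $\tfrac{2}{r}\partial_{r}\Phi$ at $r=0$ as a limit, which is harmless since $\partial_{r}\Phi$ is an odd smooth function of $r$.
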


\begin{proof}
(i)
We start with the computation for $B_{a,\rho}^p$. First observe that,
\begin{equation*}
\Delta\log|B_{a,\rho}^p(x)|=\Delta\log|\rho^{2}- xa^{c}|-\Delta\log|\rho(x-a)|=\Delta\log|\rho^{2} (a^{c})^{-1}-x|-\Delta\log|x-a|.
\end{equation*}
Thanks to Remark \ref{generalremarks} and Corollary \ref{laplog}, we get,
\begin{equation*}
\Delta\log|B_{a,\rho}^p(x)|=2\left(\frac{1}{|\rho^{2} (a^{c})^{-1}-x|^{2}}-\frac{1}{|x-a|^{2}}\right).
\end{equation*}
Evaluating the last equality in zero, we obtain the thesis,
\begin{equation*}
\Delta\log|B_{a,\rho}^p (x)|_{|{x=0}}= 2\left(\frac{1}{|\rho^{2} (a^{c})^{-1}|^{2}}-\frac{1}{|a|^{2}}\right)=\frac{2}{\rho^{4}|a|^{2}}[|a|^{4}-\rho^{4}].
\end{equation*}
(ii) For the other (regular) $\rho$-Blaschke function $B_{\mathbb{S}_{a},\rho}$, we begin,
as before, by splitting the logarithm of the norm of the ratio
in the difference of the logarithms:
\begin{equation*}
\begin{array}{rcl}
\Delta\log|B_{\mathbb{S}_{a},\rho}(x)| & = & \Delta\log|(\rho^{2}-xa)*(\rho^{2}-x a^{c})|-
\Delta\log|\rho^{2}(x-a)^s|\\
& = & \Delta\log|\rho^{4}-x\rho^{2}(a+ a^{c})+x^{2}|a|^{2}|-\Delta\log|(x-a)^s|.
\end{array}
\end{equation*}
We apply formula \eqref{logbislicepres} to our functions.
Starting from the first part we have that:
\begin{equation*}
|\rho^{4}-x\rho^{2}(a+ a^{c})+x^{2}|a|^{2}|^{2}_{|{x=0}}=\rho^{8}.
\end{equation*}
Then it holds
\begin{equation*}
\partial_{c}(\rho^{4}-x\rho^{2}(a+ a^{c})+x^{2}|a|^{2})=2x|a|^{2}-\rho^{2}(a+ a^{c}),
\end{equation*}
and so, 
\begin{equation*}
\Delta\log|\rho^{4}-x\rho^{2}(a+ a^{c})+x^{2}|a|^{2}|_{|{x=0}}=\frac{-2}{\rho^{8}}\partial_{s}((2x|a|^{2}-\rho^{2}(a+ a^{c}))(\rho^{4}- x^{c}\rho^{2}(a+ a^{c})+ (x^{c})^{2}|a|^{2}))_{|{x=0}}.
\end{equation*}
Following in this direction, we obtain that,
\begin{equation*}
\begin{array}{c}
\partial_{s}((2x|a|^{2}-\rho^{2}(a+ a^{c}))(\rho^{4}- x^{c}\rho^{2}(a+ a^{c})+ (x^{c})^{2}|a|^{2}))_{|{x=0}}=\\
\\
 =\partial_{s}(2x|a|^{2}\rho^{4}-2|x|^{2}|a|^{2}\rho^{2}(a+ a^{c})+2 x^{c} |x|^{2}|a|^{4}-\rho^{6}(a+ a^{c})+ x^{c}\rho^{4}(a+ a^{c})^{2}- (x^{c})^{2}\rho^{2}(a+ a^{c})|a|^{2})_{|{x=0}}=\\
 \\
=2|a|^{2}\rho^{4}-\rho^{4}(a+ a^{c})^{2},
\end{array}
\end{equation*}
%

where the last equality holds thanks to the linearity of the spherical derivative,
the fact that $\partial_{s}x=1$, $\partial_{s} x^{c}=-1$ and the fact that
any real-valued function has zero spherical derivative.
In fact, if $x=\alpha+I\beta$, from the former facts and from Remark \ref{leibnizspherical}, one has that 
\begin{equation*}
\partial_{s}|x|^{2}=\partial_{s}(x x^{c})=\alpha-\alpha=0,\quad \partial_{s}( x^{c} |x|^{2})=-|x|^{2},\quad \partial_{s} (x^{c})^{2}=-2\alpha,
\end{equation*}
and for $x=0$ all vanish.
Collecting everything, we obtain that,
\begin{equation*}
\Delta\log|\rho^{4}-x\rho^{2}(a+ a^{c})+x^{2}|a|^{2}|_{|{x=0}}=\frac{-2}{\rho^{4}}(2|a|^{2}-(a+ a^{c})^{2}).
\end{equation*}
Working on the other term, we obtain:
\begin{equation*}
|(x-a)^{s}|^{2}_{|{x=0}}=|a|^{4},\quad \partial_{c}(x-a)^{s}=2x-(a+ a^{c}).
\end{equation*}
As before, we compute:
\begin{equation*}
\begin{array}{c}
\partial_{s}((\partial_{c}(x-a)^{s})\overline{(x-a)^{s}})=\partial_{s}((2x-(a+ a^{c}))( (x^{c})^{2}- x^{c}(a+ a^{c})+|a|^{2}))_{|{x=0}}=\\ 
\\=\partial_{s}(2 x^{c}|x|^{2}-2|x|^{2}(a+ a^{c})+2x|a|^{2}- (x^{c})^{2}(a+ a^{c})+ x^{c}(a+ a^{c})^{2}-(a+ a^{c})|a|^{2})_{|{x=0}}=\\
\\
=2|a|^{2}-(a+ a^{c})^{2},
\end{array}
\end{equation*}
and so,
\begin{equation*}
\Delta\log|(x-a)^{s}|_{|{x=0}}=\frac{-2}{|a|^{4}}(2|a|^{2}-(a+ a^{c})^{2}).
\end{equation*}
Collecting everything we have the thesis:
\begin{equation*}
\Delta\log|B_{\mathbb{S}_{a},\rho} (x)|_{|{x=0}}=\frac{2}{\rho^{4}|a|^{4}}[\rho^{4}-|a|^{4}](2|a|^{2}-(a+ a^{c})^{2})
\end{equation*}

\end{proof}

\begin{remark}
The two equalities in the last statement are consistent, meaning that, if in the
second equality we consider the limit for $a=a_{0}+Ia_{1}$ that goes to $a_{0}$, with $a_{0}\neq 0$, then
\begin{multline*}
\lim_{a\to a_{0}}\Delta\log|B_{\mathbb{S}_{a},\rho} (x)|_{|{x=0}}=\lim_{a\to a_{0}}\frac{2}{\rho^{4}|a|^{4}}[\rho^{4}-|a|^{4}](2|a|^{2}-(a+ a^{c})^{2})=\\
=\frac{2}{\rho^{4}a_{0}^{4}}[\rho^{4}-a^{4}](-2a_{0}^{2})=\frac{4}{\rho^{4}a_{0}^{2}}[a_{0}^{4}-\rho^{4}]=2\Delta\log|B_{a_{0},\rho}^p (x)|_{|{x=0}}=\Delta\log|B_{a_{0},\rho}^p (x)|_{|{x=0}}^{2}.
\end{multline*}
\end{remark}

As for Theorem \ref{riesz}, we will state now an analogue of the 
Jensen formula for the classes of functions we are dealing with.
\begin{theorem}[Jensen formulas]\label{jensen}
Let $\Omega$ be a domain of $\mathbb{H}$ and let $f:\Omega\rightarrow\hat{\mathbb{H}}$ be a quaternionic function of one quaternionic variable and let $\rho>0$ such that the ball $\overline{\mathbb{B}_{\rho}}\subset\Omega$, $f(0)\neq 0,\infty$ and such that $f(y)\neq 0, \infty$, for any $y\in\partial \mathbb{B}_{\rho}$.
\begin{enumerate}[label=(\roman*)]
\item If $f$ is a slice preserving semiregular function such that
 $\{r_{k}\}_{k=1,2,..}$, $\{p_{h}\}_{h=1,2,..}$ are the sets of its real zeros and poles, respectively, and  
$\{\mathbb{S}_{a_{i}}\}_{i=1,2,..}$, $\{\mathbb{S}_{b_{j}}\}_{j=1,2,..}$
are the sets of its spherical zeros and poles, respectively, everything repeated accordingly to their multiplicity. 
Then,
\begin{equation}\label{jensenreg}
\begin{array}{rcl}
\log|f(0)| & = & \displaystyle\frac{1}{|\partial \mathbb{B}_{\rho}|}\displaystyle\int_{\partial \mathbb{B}_{\rho}}\log|f(y)|d\sigma(y)-\displaystyle\frac{\rho^{2}}{8}\Delta\log|f (x)|_{|{x=0}}+\\
& & \\
& & +\displaystyle\sum_{|p_{h}|<\rho}\left(\log\displaystyle\frac{\rho}{|p_{h}|}+\displaystyle\frac{1}{4}\left(\displaystyle\frac{p_{h}^{4}-\rho^{4}}{\rho^{2}p_{h}^{2}}\right)\right) -\displaystyle\sum_{|r_{k}|<\rho}\left(\log\displaystyle\frac{\rho}{|r_{k}|}+\displaystyle\frac{1}{4}\left(\displaystyle\frac{r_{k}^{4}-\rho^{4}}{\rho^{2}r_{k}^{2}}\right)\right)\\
& & \\
& & +\displaystyle\sum_{|b_{j}|<\rho}\left(\log\displaystyle\frac{\rho^{2}}{|b_{j}|^{2}}+\displaystyle\frac{1}{4}\left(\displaystyle\frac{\rho^{4}-|b_{j}|^{4}}{\rho^{2}|b_{j}|^{4}}\right)[2|b_{j}|^{2}-(b_{j}+ b^{c}_{j})^{2}]\right)\\
& & \\
& & -\displaystyle\sum_{|a_{i}|<\rho}\left(\log\displaystyle\frac{\rho^{2}}{|a_{i}|^{2}}+\displaystyle\frac{1}{4}\left(\displaystyle\frac{\rho^{4}-|a_{i}|^{4}}{\rho^{2}|a_{i}|^{4}}\right)[2|a_{i}|^{2}-(a_{i}+ a^{c}_{i})^{2}]\right).
\end{array}
\end{equation}

\item  If $f$ is a PQL function
\begin{equation*}
f(x):=a_{0}\prod_{k=1}^{N}(x-q_{k})^{M_{k}}a_{k},
\end{equation*}
with $M_{k}=\pm 1$ and $|q_{k}|<\rho$, for any $k$, then,

\begin{equation}\label{jensenpql}
\begin{array}{rcl}
\log|f(0)| & = & \displaystyle\frac{1}{|\partial \mathbb{B}_{\rho}|}\displaystyle\int_{\partial \mathbb{B}_{\rho}}\log|f(y)|d\sigma(y)-\displaystyle\frac{\rho^{2}}{8}\Delta\log|f|_{|{x=0}}+\\
& & \\
& & -\displaystyle\sum_{k=1}^{N}M_{k}\left(\log\displaystyle\frac{\rho}{|q_{k}|}+\displaystyle\frac{1}{4}\left(\displaystyle\frac{|q_{k}|^{4}-\rho^{4}}{\rho^{2}|q_{k}|^{2}}\right)\right).
\end{array}
\end{equation}

\end{enumerate}
\end{theorem}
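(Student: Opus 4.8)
The plan is to reduce both cases to the biharmonic mean value property boxed above (Theorem~7.24 of \cite{mitrea}), after correcting $\log|f|$ by the moduli of suitable $\rho$-Blaschke factors so that the corrected function becomes biharmonic on all of $\overline{\mathbb{B}_\rho}$. The two features that make this work are that every Blaschke factor has modulus $1$ on $\partial\mathbb{B}_\rho$ (Theorems~\ref{borderbla} and \ref{borderblapun}), so its logarithm drops out of the boundary integral, and that the only singularity it has inside $\mathbb{B}_\rho$ carries exactly the log-biharmonic mass we wish to cancel — its other zero/pole sitting outside $\overline{\mathbb{B}_\rho}$, since $|\rho^{2}a^{-1}|=\rho^{2}/|a|>\rho$ whenever $|a|<\rho$.

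\emph{Case (ii).} Here $\log|f(x)|=\log|a_0|+\sum_k M_k\log|x-q_k|+\sum_k\log|a_k|$, and I set
$$u(x):=\log|f(x)|+\sum_{k=1}^{N}M_k\log\bigl|B^{p}_{q_k,\rho}(x)\bigr|.$$
Since $B^{p}_{q_k,\rho}$ has a pole at $q_k$ and a zero outside the ball, Corollary~\ref{translation} together with the Riesz computation (Theorem~\ref{riesz}(ii)) shows that the distributional $\Delta^{2}$ of each correction cancels the mass $\gamma M_k\delta_{q_k}$ of $\log|f|$ inside $\mathbb{B}_\rho$, so $\Delta^{2}u\equiv 0$ on $\mathbb{B}_\rho$. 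Applying the mean value property to $u$ at $x=0$, the boundary contribution of the corrections vanishes because $|B^{p}_{q_k,\rho}|\equiv 1$ on $\partial\mathbb{B}_\rho$, leaving $\int_{\partial\mathbb{B}_\rho}\log|f|\,d\sigma$. It then remains to evaluate $u(0)$ and $\Delta u(0)$: a direct computation gives $\log|B^{p}_{q_k,\rho}(0)|=\log(\rho/|q_k|)$, while $\Delta\log|B^{p}_{q_k,\rho}|_{|x=0}$ is furnished by Lemma~\ref{laplmoebius}(i). Substituting and rearranging yields \eqref{jensenpql}.

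\emph{Case (i).} I use the boxed factorization recalled in Section~1: on $\mathbb{B}_\rho$ a slice preserving semiregular $f$ equals $x^{n}$ times products of $(x-r_h)^{n_h}$ and $((x-q_k)^{s})^{n_k}$ times a slice preserving $g$ with no zeros or poles in $\mathbb{B}_\rho$; the hypothesis $f(0)\neq 0,\infty$ forces $n=0$. Taking logarithms splits $\log|f|$ into $\log|g|$, biharmonic by Theorem~\ref{slicepreslogbi}, plus the singular terms $\sum n_h\log|x-r_h|+\sum n_k\log|(x-q_k)^{s}|$. I correct the real zeros and poles by the punctual factors $B^{p}_{r_k,\rho},B^{p}_{p_h,\rho}$ and the spherical zeros and poles by the spherical factors $B_{\mathbb{S}_{a_i},\rho},B_{\mathbb{S}_{b_j},\rho}$, with sign $+$ on zeros and $-$ on poles, i.e.
$$u(x):=\log|f(x)|+\sum_{k}\log\bigl|B^{p}_{r_k,\rho}(x)\bigr|-\sum_{h}\log\bigl|B^{p}_{p_h,\rho}(x)\bigr|+\sum_{i}\log\bigl|B_{\mathbb{S}_{a_i},\rho}(x)\bigr|-\sum_{j}\log\bigl|B_{\mathbb{S}_{b_j},\rho}(x)\bigr|.$$
By Theorem~\ref{riesz}(i), using Theorem~\ref{lebesgue} to identify the spherical singularity $\Delta^{2}\log|(x-a)^{s}|$ with $\gamma\,\mathcal{L}_{\mathbb{S}_a}$, the corrections kill the Riesz measure of $\log|f|$ inside $\mathbb{B}_\rho$, so $u$ is biharmonic on $\overline{\mathbb{B}_\rho}$ and the mean value property applies verbatim. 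The boundary terms again drop out by Theorems~\ref{borderbla} and \ref{borderblapun}; the values at $0$ are $\log|B^{p}_{\cdot,\rho}(0)|=\log(\rho/|\cdot|)$ and $\log|B_{\mathbb{S}_{\cdot},\rho}(0)|=\log(\rho^{2}/|\cdot|^{2})$, and the Laplacians at $0$ are read off from the two parts of Lemma~\ref{laplmoebius}. Collecting the four families of terms reproduces \eqref{jensenreg}.

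The routine part is the bookkeeping of the numerical constants; the step requiring care is the verification that $u$ is genuinely biharmonic on all of $\overline{\mathbb{B}_\rho}$. This rests on two points: first, that the singular part of $\log|f|$ matches termwise, and with the correct multiplicities, the singular part of the chosen Blaschke moduli — which is precisely what the explicit factorization and Theorem~\ref{lebesgue} supply, so that $\Delta^{2}u$ vanishes as a distribution on $\mathbb{B}_\rho$; and second, that the corrections introduce no new singularity inside $\overline{\mathbb{B}_\rho}$, which follows from the location of the Blaschke zeros outside the ball together with the hypothesis $f(y)\neq 0,\infty$ on $\partial\mathbb{B}_\rho$ (needed so that $u$ is biharmonic on a neighborhood of the closed ball, as the mean value property demands). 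Once biharmonicity on $\overline{\mathbb{B}_\rho}$ is secured, the formulas follow by direct substitution.
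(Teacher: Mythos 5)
Your proposal is correct and follows essentially the same route as the paper: the paper also corrects $f$ by the very same $\rho$-Blaschke factors (multiplicatively, defining a function $g$ with no zeros or poles in $\mathbb{B}_{\rho}$, which is the exponential counterpart of your additive correction of $\log|f|$), applies the biharmonic mean value property to $\log|g|$, uses $|B_{\cdot,\rho}|\equiv 1$ on $\partial\mathbb{B}_{\rho}$ to keep the boundary integral unchanged, and reads off the values and Laplacians at $0$ from Lemma~\ref{laplmoebius}. The only cosmetic difference is that you justify biharmonicity of the corrected function via cancellation of distributional Riesz masses, whereas the paper observes directly that $g$ is a slice preserving semiregular function without zeros or poles in the ball and invokes Theorem~\ref{slicepreslogbi}.
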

The basic idea of the proof comes from the complex case (see for instance \cite{ahlfors, jank, rudin}). Starting from that, we expose the details in our quaternionic setting.

%

\begin{proof}
First of all, since $\overline{\mathbb{B}_{\rho}}\subset\Omega$, then $\{r_{k},p_{h}\}_{k,h=1,2,\dots}\cap\mathbb{B}_{\rho}$ is finite and $\{\mathbb{S}_{a_{i}},\mathbb{S}_{b_{j}}\}_{i,j=1,2,\dots}\cap\mathbb{B}_{\rho}$  is a finite set of spheres
(see Corollaries \ref{zerocptcor} and \ref{polecptcor}).

To begin, suppose that $f$ has no zeros or poles in $\mathbb{B}_{\rho}$. Then, since $\log|f|$ is biharmonic
in $\mathbb{B}_{\rho}$, then formulas \eqref{jensenreg} and \eqref{jensenpql} are exactly the mean value property for biharmonic functions.

Suppose now that $f$ is a slice preserving semiregular function such that   $\mathcal{Z}(f)=\{r_{k},\mathbb{S}_{a_i}\}_{k,i=1,2,\dots}$ and $\mathcal{P}(f)=\{p_{h},\mathbb{S}_{b_j}\}_{h,j=1,2,\dots}$ repeated according to their multiplicity and $f(0)\neq 0,\infty$. Define $g$ as the following function:

%

\begin{equation}\label{poissonstart}
g(x)  :=  \left(\prod_{|p_{h}|<\rho} B^{p}_{p_{h},\rho} (x) \prod_{|b_{j}|<\rho}B_{\mathbb{S}_{b_{j}},\rho} (x) \right)^{-1}
 \left(\prod_{|r_{k}|<\rho} B^{p}_{r_{k},\rho} (x) \prod_{|a_{i}|<\rho}B_{\mathbb{S}_{a_{i}},\rho}(x) \right) f(x).
\end{equation}

Observe that each factor on the right hand side is a slice preserving semiregular function.
Moreover, $g(x)$ is different from $0$ and $\infty$ in $|x|<\rho$, hence $\log |g(x)|$
is a biharmonic function and so it satisfies the biharmonic mean value property:
\begin{equation*}
\log |g(0)|=\frac{1}{|\partial \mathbb{B}_{\rho}|}\int_{\partial \mathbb{B}_{\rho}}\log|g(y)|d\sigma(y)-\frac{\rho^{2}}{8}\Delta \log|g (x)|_{|{x=0}},
\end{equation*}
but 
\begin{equation*}
|g(0)|=|f(0)|\left(\prod_{|p_{h}|<\rho}\frac{\rho }{|p_{h}|}\right)^{-1}\left(\prod_{|b_{j}|<\rho}\frac{\rho^{2}}{|b_{j}|^{2}}\right)^{-1}\left(\prod_{|r_{k}|<\rho}\frac{\rho}{|r_{k}|}\right)\left(\prod_{|a_{i}|<\rho}\frac{\rho^{2}}{|a_{i}|^{2}}\right)
\end{equation*}
and then
\begin{equation*}
\log|g(0)|=\log|f(0)|+2\left[\displaystyle\sum_{|b_{j}|<\rho}\log\displaystyle\frac{\rho}{|b_{j}|}-\displaystyle\sum_{|a_{i}|<\rho}\log\displaystyle\frac{\rho}{|a_{i}|}\right]+\displaystyle\sum_{|p_{h}|<\rho}\log\displaystyle\frac{\rho}{|p_{h}|}-\displaystyle\sum_{|r_{k}|<\rho}\log\displaystyle\frac{\rho}{|r_{k}|}.
\end{equation*}
Moreover, thanks to Lemma \ref{laplmoebius} and to the linearity of the laplacian, we have that,
\begin{equation*}
\begin{array}{c}
\Delta\log|g (x)|_{|{x=0}}=\Delta\log|f(x)|_{|{x=0}}+\displaystyle\sum_{|r_{k}|<\rho}\displaystyle\frac{2}{\rho^{4}r_{k}^{2}}[r_{k}^{4}-\rho^{4}]-\displaystyle\sum_{|p_{h}|<\rho}\displaystyle\frac{2}{\rho^{4}p_{h}^{2}}[p_{h}^{4}-\rho^{4}]+\\
\\
+\displaystyle\sum_{|a_{i}|<\rho}\displaystyle\frac{2}{\rho^{4}|a_{i}|^{4}}[\rho^{4}-|a_{i}|^{4}][2|a_{i}|^{2}-(a_{i}+ a^{c}_{i})^{2}]-\sum_{|b_{j}|<\rho}\displaystyle\frac{2}{\rho^{4}|b_{j}|^{4}}[\rho^{4}-|b_{j}|^{4}][2|b_{j}|^{2}-(b_{j}+ b^{c}_{j})^{2}].
\end{array}
\end{equation*}

Now, thanks to Theorem \ref{borderbla} for any $x\in\partial \mathbb{B}_{\rho}$ (i.e. for $|x|=\rho$), we have that, for any $i,j,k,h$,
\begin{equation*}
|B_{\mathbb{S}_{b_{j}},\rho}(x)|=|B_{\mathbb{S}_{a_{i}},\rho}(x)|=|B^{p}_{p_{h},\rho}(x)|=|B^{p}_{r_{k},\rho}(x)|=1,
\end{equation*}
and so $\int_{\partial\mathbb{B}_{\rho}}\log |g(y)|d\sigma(y)=\int_{\partial\mathbb{B}_{\rho}}\log |f(y)|d\sigma(y)$.

In the case in which $f$ is a PQL function,
\begin{equation*}
f(x):=a_{0}\prod_{k=1}^{N}(x-q_{k})^{M_{k}}a_{k},
\end{equation*}
the proof goes as before, once we define an appropriate function $g$.
In this case the function $g$ is defined as:
\begin{equation*}
g(x):=\left(\prod_{k=1}^{N}\frac{(\rho^{2}-x q^{c}_{k})^{M_{k}}}{\rho}\right)\prod_{k=N}^{1}(x-q_{k})^{-M_{k}}f(x),
\end{equation*}
of course $\log|g|$ is well defined, meaning that, thanks to the properties of the norm, $|g|$ is not zero or infinite inside the ball.
To prove that this function $g$ is equal to $f$ on $\partial \mathbb{B}_{\rho}$, it is sufficient to adapt the proof of Theorem \ref{borderbla} (and so of Theorem \ref{borderblapun}), in this case
remembering that, for any couple of quaternions $p,q$ it holds $|pq|=|p||q|$.
\end{proof}

As the classical 2D case, our 4D Jensen formulas
relate the mean of a function on the boundary of a ball
centered in zero with radius $\rho$, with the disposition of
its zeros and singularities contained inside the ball.

\begin{remark}\label{mix2}
One key point of the proof are the features of $\rho$-Blaschke factors. These
are built to remove zeros and singularities of the considered functions and to send the set $\partial \mathbb{B}_{\rho}$ onto $\partial\mathbb{B}$. In the following two points of this remark we show how to modify properly these factors to deal with mixed cases.

\begin{enumerate}[label=(\roman*)]
\item In case (ii) of Theorem \ref{jensen}, it is not a problem to assume
some of the quaternions $\{q_{k}\}_{k=1}^{N}$ to lie outside of the ball of radius 
$\rho$. If, in fact, for instance $f(x)=(x-q_{1})(x-q_{2})(x-q_{3})$, with $|q_{1}|,|q_{3}|<\rho$ and $|q_{2}|>\rho$, then, it is sufficient to define an appropriate
function $g$:
\begin{equation*}
g(x):=(x-q_{2})\frac{(\rho^{2}-x q^{c}_{3})}{\rho}\frac{(\rho^{2}-x q^{c}_{1})}{\rho}(x-q_{3})^{-1}(x-q_{2})^{-1}(x-q_{1})^{-1}f(x),
\end{equation*}
and of course, the formula, would involve only the contributes arising from $q_{1}$ and $q_{3}$.
\item A mix of cases (i) and (ii) of the previous theorem can be considered. If, in fact,
we have a quaternionic function $\phi$ defined as $\phi(x)=(x-q_{1})f(x)(x-q_{2})$,
with $f$ a slice preserving semiregular function and, say $|q_{1}|, |q_{2}|<\rho$, it is sufficient to define
$g$ in the following way:
\begin{equation*}
g(x):=\frac{(\rho^{2}-x q^{c}_{2})}{\rho}g_{1}(x) \frac{(\rho^{2}-x q^{c}_{1})}{\rho}(x-q_{2})^{-1}g_{2}(x)(x-q_{1})^{-1}\phi(x),
\end{equation*}
where $g_{1}(x)$ and $g_{2}(x)$ are defined as follows. 
If $\{r_{k}\}_{k=1,2,..}$, $\{p_{h}\}_{h=1,2,..}$ are the sets of real zeros and poles, respectively, and  
$\{\mathbb{S}_{a_{i}}\}_{i=1,2,..}$, $\{\mathbb{S}_{b_{j}}\}_{j=1,2,..}$
are the sets of spherical zeros and poles, respectively, of $f$, everything repeated accordingly to their multiplicity, then:
\begin{equation*}
g_{1}(x):=
\left(\prod_{|p_{h}|<\rho} \frac{\rho^{2}-xp_{h}}{\rho}\prod_{|b_{j}|<\rho}\frac{(x-\rho^{2}b_{j}^{-1})^{s}|b_{j}^{2}|}{\rho^{2}}\right)^{-1}
 \left(\prod_{|r_{k}|<\rho} \frac{\rho^{2}-xr_{k}}{\rho}\prod_{|a_{i}|<\rho}\frac{(x-\rho^{2}a_{i}^{-1})^{s}|a_{i}^{2}|}{\rho^{2}}\right),
\end{equation*}

\begin{equation*}
g_{2}(x)  :=  \left(\prod_{|p_{h}|<\rho} \frac{1}{(x-p_{h})}\prod_{|b_{j}|<\rho}\frac{1}{(x-b_{j})^{s}}\right)^{-1}
 \left(\prod_{|r_{k}|<\rho} \frac{1}{(x-r_{k})}\prod_{|a_{i}|<\rho}\frac{1}{(x-a_{i})^{s}}\right).
\end{equation*}
Observe that the product $g_{1}(x)g_{2}(x)f(x)$
is equal to the function $g$ defined in equation  \eqref{poissonstart}.
\end{enumerate}
\end{remark}

\begin{remark}
In general, given a slice preserving regular function $f$, none of the terms: 
\begin{equation*}
\frac{1}{4}\left(\frac{\rho^{4}-|a_{i}|^{4}}{\rho^{2}|a_{i}|^{4}}\right)[2|a_{i}|^{2}-(a_{i}+ a^{c}_{i})^{2}],
\end{equation*}
in formula \eqref{jensenreg}, can be compensated by one of these:
\begin{equation*}	
\frac{1}{4}\left(\frac{r_{k}^{4}-\rho^{4}}{\rho^{2}r_{k}^{2}}\right).
\end{equation*}
That is, for any $k$ and $i$,
\begin{equation*}
\frac{1}{4}\left(\frac{\rho^{4}-|a_{i}|^{4}}{\rho^{2}|a_{i}|^{4}}\right)[2|a_{i}|^{2}-(a_{i}+ a^{c}_{i})^{2}]+\frac{1}{4}\left(\frac{r_{k}^{4}-\rho^{4}}{\rho^{2}r_{k}^{2}}\right)\neq 0,
\end{equation*}
in fact, the left hand side of the previous equation, is identically zero if and only if,
\begin{equation*}
\frac{1}{4\rho^2}\left(|a_i|^4r_k^4-|a_i|^4\rho^4+\rho^4r_k^2[2|a_{i}|^{2}-(a_{i}+ a^{c}_{i})^{2}]-|a_i|^4r_k^2[2|a_{i}|^{2}-(a_{i}+ a^{c}_{i})^{2}]\right)= 0
\end{equation*}
which entails that the following system holds:
\begin{equation*}
\begin{cases}
r_k^2[2|a_{i}|^{2}-(a_{i}+ a^{c}_{i})^{2}]-|a_i|^4= 0\\
|a_i|^4r_k^2[r_k^2-[2|a_{i}|^{2}-(a_{i}+ a^{c}_{i})^{2}]]= 0.
\end{cases}
\end{equation*}
From the second equation, one gets $r_k^2=[2|a_{i}|^{2}-(a_{i}+ a^{c}_{i})^{2}]$ and plugging this in the first equation, one has $[2|a_{i}|^{2}-(a_{i}+ a^{c}_{i})^{2}]^2=|a_i|^4$.
If now $a_i=\alpha+I\beta$, with $\beta>0$, the last equation is equivalent to
\begin{equation*}
\beta^2(\beta^2+\alpha^2)=0,
\end{equation*}
and this is not possible.

\end{remark}

Another Jensen type formula for the whole class of quaternionic  regular functions was given in \cite{entireslfun}. 
Thanks to Remark \ref{counterexample}, the class of slice preserving semiregular
functions for which is possible to apply the Jensen formula in the form of Theorem
\ref{jensen}, is optimal. However, we do not exclude the possibility to develop other kind of 
formulas relating the value of a generic regular function at a point and its mean
on a sphere containing that point to the disposition of its zeros and poles.
In fact, as already said, the Jensen formula contained in \cite{entireslfun} is 
written for any  regular function but is of course different from the one we just presented.

The main difference is that, while in the one in \cite{entireslfun} the integral
is made on a single slice, in our formula, the integral is over the border
of an Euclidean (4-dimensional) ball. Another difference that comes from this fact
is that, as the reader may observe, in our formula other contributions coming
from the presence of the laplacian of the logarithm of the modulus of Blaschke function appear. 

However, observe that, if one applies our Jensen formula or the Jensen formula contained
in \cite{entireslfun} to a  
$\rho$-Blaschke factor at the sphere $\mathbb{S}_{a}$, $B_{\mathbb{S}_{a},\rho}(x)$,
on the ball $\mathbb{B}_{\rho}$, obtains the same result. This because one has:

\begin{equation*}
\begin{array}{rcl}
\log|B_{\mathbb{S}_{a},\rho}(0)| & = & \displaystyle\frac{1}{|\partial \mathbb{B}_{\rho}|}\displaystyle\int_{\partial \mathbb{B}_{\rho}}\log|B_{\mathbb{S}_{a},\rho}(y)|d\sigma(y)-\displaystyle\frac{\rho^{2}}{8}\Delta\log|B_{\mathbb{S}_{a},\rho}(x)|_{|{x=0}}+\\
& & \\
& & +\left(\log\displaystyle\frac{\rho^{2}}{|a|^{2}}+\displaystyle\frac{1}{4}\left(\displaystyle\frac{\rho^{4}-|a|^{4}}{\rho^{2}|a|^{4}}\right)[2|a|^{2}-(a+ a^{c})^{2}]\right),
\end{array}
\end{equation*}
and now thanks to the fact that $|B_{\mathbb{S}_{a},\rho}(y)|=1$ for any $|y|=\rho$ and thanks to Lemma \ref{laplmoebius}, one has the trivial equality:
\begin{equation*}
\log|B_{\mathbb{S}_{a},\rho}(0)|= \log\displaystyle\frac{\rho^{2}}{|a|^{2}}.
\end{equation*}
The same result can be obtained using the Jensen formula in \cite{entireslfun}.

\subsection{Some corollaries of Jensen formulas}
The last subsection of this work is devoted to state some corollaries of Jensen formulas. Some of them are analogues to results true in complex analysis while other are peculiar of our setting.

The first two results allow us to deal with functions having zeros or singularities in zero or on $\partial \mathbb{B}_{\rho}$. So, in a certain sense, they extend 
our Jensen formulas.

\begin{corollary}\label{genjens1}
Let $f$ be a function that satisfies the hypotheses of Theorem \ref{jensen} but
admits zeros or singularities on $\partial \mathbb{B}_{\rho}$. Then, the two formulas in the
statement still hold in the same form, i.e. no contribution from the zeros and
singularities lying on $\partial \mathbb{B}_{\rho}$ appears.
\end{corollary}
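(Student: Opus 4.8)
The plan is to deduce the statement from Theorem \ref{jensen} by a limiting argument in the radius. The key remark is that in both \eqref{jensenreg} and \eqref{jensenpql} every sum ranges over zeros and poles of \emph{strictly} smaller modulus than $\rho$; a zero or a singularity sitting exactly on $\partial\mathbb{B}_{\rho}$ is therefore never counted, whatever radius is used. Consequently the only hypothesis of Theorem \ref{jensen} that is being dropped is $f(y)\neq 0,\infty$ on $\partial\mathbb{B}_{\rho}$ (the assumption $f(0)\neq 0,\infty$ is retained, so $\log|f(0)|$ stays finite), and the entire content of the corollary reduces to showing that the boundary average is finite and varies continuously as the radius reaches $\rho$.

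First I would fix a radius $\rho'<\rho$. By Corollaries \ref{zerocptcor} and \ref{polecptcor} (and, in the PQL case, by finiteness of the factors) the set $\mathcal{ZP}(f)\cap\overline{\mathbb{B}_{\rho}}$ is a finite union of real points and spheres, so only finitely many moduli occur among the elements of $\mathcal{ZP}(f)$. Choosing $\rho'$ larger than all these moduli that are $<\rho$, and different from each of them, the sphere $\partial\mathbb{B}_{\rho'}$ meets $\mathcal{ZP}(f)$ nowhere, while the index sets $\{|\cdot|<\rho'\}$ already coincide with $\{|\cdot|<\rho\}$. Hence Theorem \ref{jensen} applies on $\mathbb{B}_{\rho'}$ and yields \eqref{jensenreg} (resp.\ \eqref{jensenpql}) with $\rho$ replaced by $\rho'$ and with exactly the same sums. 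Letting $\rho'\to\rho^{-}$, the left-hand side $\log|f(0)|$ is constant, while $-\tfrac{(\rho')^{2}}{8}\Delta\log|f|_{|x=0}$ and every single summand are continuous in $\rho'$ and tend to their values at $\rho$; the only term requiring a genuine argument is the boundary average.

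It thus remains to prove that
\begin{equation*}
\frac{1}{|\partial\mathbb{B}_{\rho'}|}\int_{\partial\mathbb{B}_{\rho'}}\log|f(y)|\,d\sigma(y)\longrightarrow\frac{1}{|\partial\mathbb{B}_{\rho}|}\int_{\partial\mathbb{B}_{\rho}}\log|f(y)|\,d\sigma(y)\qquad(\rho'\to\rho^{-}),
\end{equation*}
the right-hand integral being finite. On a thin closed annulus $\{\rho-\delta\le|y|\le\rho\}$ the only singularities of $\log|f|$ are the boundary zeros and poles (the interior ones are at positive distance), so there one may write $\log|f(y)|=\Phi(y)+\sum_{\ell}\varepsilon_{\ell}\,\sigma_{\ell}(y)$, where $\Phi$ is continuous, $\varepsilon_{\ell}=\pm1$ according as the node is a zero or a pole, and each $\sigma_{\ell}$ is the logarithm of the elementary factor of a boundary zero/pole: $\log|y-q|$ for a real point or a PQL node $q$ with $|q|=\rho$, and $\log|(y-w)^{s}|$ for a boundary sphere $\mathbb{S}_{w}$. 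For $\Phi$ the convergence is immediate by continuity, so everything reduces to each $\sigma_{\ell}$ separately.

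The main obstacle is the dominated convergence for these singular terms. Rescaling by $y=\rho'\eta$, $\eta\in S^{3}=\partial\mathbb{B}$, the normalized average of $\sigma_{\ell}$ becomes $\tfrac{1}{|S^{3}|}\int_{S^{3}}\sigma_{\ell}(\rho'\eta)\,d\sigma(\eta)$, whose integrand converges pointwise, off the singular set $\Sigma_{\ell}/\rho$ (with $\Sigma_{\ell}$ either the point $\{q\}$ or the $2$-sphere $\mathbb{S}_{w}\subset\partial\mathbb{B}_{\rho}$), to $\sigma_{\ell}(\rho\eta)$. Near $\Sigma_{\ell}$ one has $\sigma_{\ell}(y)=\log\mathrm{dist}(y,\Sigma_{\ell})+O(1)$, since the second conjugate factor of $(y-w)^{s}$ stays bounded away from $0$; combining this with the uniform geometric lower bound $\mathrm{dist}(\rho'\eta,\Sigma_{\ell})\ge c\,\mathrm{dist}(\eta,\Sigma_{\ell}/\rho)$, valid for $\rho'$ near $\rho$ by projecting onto the radial line through $\eta$, yields the majorant $\sup_{\rho'}|\sigma_{\ell}(\rho'\eta)|\le C+\bigl|\log\mathrm{dist}(\eta,\Sigma_{\ell}/\rho)\bigr|$. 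As $\Sigma_{\ell}/\rho$ has codimension at least one in the three-dimensional sphere $S^{3}$, this majorant is integrable, the transverse singularity being of the convergent type $\int_{0}^{1}|\log t|\,t^{\,m}\,dt$ with $m\ge 0$, so dominated convergence gives at once both the finiteness of the limiting integral and the required convergence. Substituting back into the $\rho'$-version of \eqref{jensenreg}--\eqref{jensenpql} and passing to the limit reproduces the two formulas unchanged, with no term attached to the zeros or singularities on $\partial\mathbb{B}_{\rho}$; cases (i) and (ii) are handled simultaneously, the sole difference being the shape of the boundary factors.
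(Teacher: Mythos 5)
Your argument is correct, and it is the same basic strategy as the paper's --- perturb the radius and pass to the limit --- but run in the opposite direction and with more analytic detail. The paper approaches $\rho$ from \emph{outside}: it picks $r>\rho$ with no zeros or poles on $\partial\mathbb{B}_r$, applies Theorem \ref{jensen} on $\mathbb{B}_r$ (so the boundary zero/pole $a$ with $|a|=\rho$ \emph{does} enter the sums), and observes that its contributions $\log\frac{r}{|a|}$, $\frac{|a|^4-r^4}{r^2|a|^2}$, etc.\ all vanish as $r\to\rho^+$; the identification $\lim_{r\to\rho}I(f,r)=I(f,\rho)$ is simply asserted as ``continuity of the integral in $r$''. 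You approach from \emph{inside}, so the boundary nodes never enter the sums at all, and the entire burden falls on proving that the boundary average converges and is finite at $r=\rho$ --- which is precisely the step the paper glosses over and which you justify properly, via the decomposition of $\log|f|$ into a continuous part plus logarithms of elementary factors, the bound $\mathrm{dist}(\rho'\eta,\Sigma_\ell)\ge c\,\mathrm{dist}(\eta,\Sigma_\ell/\rho)$, and dominated convergence against the integrable majorant $C+|\log\mathrm{dist}(\eta,\Sigma_\ell/\rho)|$ (integrable since $\Sigma_\ell/\rho$ has codimension at least one in $S^3$). So your write-up is in fact more complete on the crucial point. Two small remarks: (a) in the PQL case the statement of Theorem \ref{jensen}(ii) assumes $|q_k|<\rho$ for \emph{all} $k$, so when you shrink to $\rho'<\rho$ a node sitting on $\partial\mathbb{B}_\rho$ lies outside $\mathbb{B}_{\rho'}$ and you should explicitly invoke the extension of Remark \ref{mix2}(i) (nodes outside the ball simply contribute nothing); the paper's outside approach avoids this because all nodes are then interior to $\mathbb{B}_r$; (b) for the semiregular case your continuous part $\Phi$ should be justified by the boxed factorization $f=x^n\prod(x-r_h)^{n_h}\prod((x-q_k)^s)^{n_k}g$ applied on a ball slightly larger than $\overline{\mathbb{B}_\rho}$, which is available since $\overline{\mathbb{B}_\rho}\subset\Omega$.
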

\begin{proof}
First of all, observe that since $\partial \mathbb{B}_{\rho}$ is a compact set, then thanks to Corollaries \ref{zerocptcor} and \ref{polecptcor} the number of zeros or singularities of a semiregular function on it is finite.

 Therefore, we show how to deal in both cases, with a single zero and then the completion of the proof can be simply extended.
Suppose then that $a$ is a (isolated or spherical) zero for $f$ such that $|a|=\rho$, then, consider $r>\rho$, such that $f_{|{\partial \mathbb{B}_{r}}}\neq 0,\infty$. Such $r$ exists because, otherwise there would be too many zeros and poles and either the function is identically zero or not regular at all. If we apply the Jensen formula to $f$ on this new ball $\mathbb{B}_{r}$, then, looking Jensen formulas, the integral,
\begin{equation*}
I(f,r):=\frac{1}{|\partial \mathbb{B}_{r}|}\int_{\partial \mathbb{B}_{r}}\log|f(y)|d\sigma(y),
\end{equation*}
is a continuous function of $r$, therefore the limit
\begin{equation*}
\lim_{r\to\rho}I(f,r)
\end{equation*}
exists and looking again at the Jensen formula, for $r\to \rho$ the terms
involving $a$ vanish. These terms can be of the following forms,
\begin{equation*}
\log\frac{\rho}{|a|},\quad\frac{\rho^{4}-|a|^{4}}{\rho^{2}|a|^{4}}[2|a|^{2}-(a+ a^{c})^{2}]
\quad\mbox{or}\quad \frac{1}{4}\displaystyle\frac{|a|^{4}-\rho^{4}}{\rho^{2}|a|^{2}},
\end{equation*}
therefore, we get the thesis.
\end{proof}

\begin{corollary}\label{genjens2}

Let $f$ be a function that satisfies the hypotheses of Theorem \ref{jensen} but
admits a zero or a singularity at zero, i.e. there exists $k\in\mathbb{Z}\setminus\{0\}$, such that 
\begin{equation*}
f(x)=x^{k}f_{1}(x),
\end{equation*}
with $f_{1}$ that satisfies all the hypotheses of Theorem \ref{jensen}. Then,
Jensen formulas as in Theorem \ref{jensen} (and subsequent remarks), hold, with left hand side equal to,
\begin{equation*}
k\log\rho+\log|f_{1}(0)|.
\end{equation*}

\end{corollary}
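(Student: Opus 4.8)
The plan is to reduce everything to the Jensen formula already established for $f_1$ in Theorem \ref{jensen}, exploiting the factorization $f(x)=x^{k}f_{1}(x)$ together with the additivity of the real logarithm, $\log|f(x)|=k\log|x|+\log|f_{1}(x)|$ wherever the right-hand side is defined.

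First I would record the structural role of the extra factor $x^{k}$. For $k>0$ it is a slice preserving regular function, and for $k<0$ a slice preserving semiregular function (a power of $x^{-1}$); in either case its only zero or pole is the real point $0$, of multiplicity $|k|$, and it has neither zeros nor poles elsewhere in $\overline{\mathbb{B}_{\rho}}$, in particular none on $\partial\mathbb{B}_{\rho}$. Hence $f$ and $f_{1}$ share exactly the same zeros and poles inside $\overline{\mathbb{B}_{\rho}}$ away from the origin, and $f(y)\neq 0,\infty$ on $\partial\mathbb{B}_{\rho}$ precisely because the same holds for $f_{1}$. By assumption $f_{1}$ meets all the hypotheses of Theorem \ref{jensen}, including $f_{1}(0)\neq 0,\infty$, so that theorem applies verbatim to $f_{1}$.

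Next I would apply Theorem \ref{jensen} to $f_{1}$ and add $k\log\rho$ to both sides. Since $\log|f(y)|=k\log\rho+\log|f_{1}(y)|$ for $|y|=\rho$, and the average of the constant $k\log\rho$ over $\partial\mathbb{B}_{\rho}$ is again $k\log\rho$, the boundary integral $\frac{1}{|\partial\mathbb{B}_{\rho}|}\int_{\partial\mathbb{B}_{\rho}}\log|f_{1}(y)|d\sigma(y)$ augmented by $k\log\rho$ becomes exactly $\frac{1}{|\partial\mathbb{B}_{\rho}|}\int_{\partial\mathbb{B}_{\rho}}\log|f(y)|d\sigma(y)$. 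As the zero and pole contributions of $f_{1}$ coincide with those of $f$ off the origin, the resulting identity is precisely the Jensen formula of Theorem \ref{jensen} written for $f$ with the origin contribution suppressed, and with left-hand side $k\log\rho+\log|f_{1}(0)|$, as claimed. The identical manipulation covers the PQL case (ii) and the mixed situations of Remark \ref{mix2}, treating $x^{k}$ as an additional factor with node $q=0$.

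The step that demands care---and the reason the statement is phrased through $f_{1}$ rather than $f$---is the Laplacian term. One cannot substitute $f$ into $-\frac{\rho^{2}}{8}\Delta\log|f(x)|_{|x=0}$, because by Corollary \ref{laplog} one has $\Delta\log|x|=2/|x|^{2}$, which diverges at the origin; correspondingly $\log(\rho/|x|)$ and the associated Blaschke correction each blow up as the node tends to $0$, so the result cannot be obtained as a naive limit of the standard formula. The resolution is that the finite Laplacian term surviving in the formula is the one attached to $f_{1}$, while the entire singular contribution of the $x^{k}$ factor collapses into the single clean term $k\log\rho$ on the left-hand side. Thus no limiting argument is needed: the factorization separates the divergent part exactly, and the finite remainder is precisely the Jensen data of $f_{1}$.
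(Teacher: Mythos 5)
Your argument is correct and is essentially the paper's own proof in a lightly different guise: the paper introduces the auxiliary function $g(x)=\left(\rho x^{-1}\right)^{k}f(x)=\rho^{k}f_{1}(x)$ and applies Theorem \ref{jensen} to $g$, which is exactly your step of applying the theorem to $f_{1}$ and adding the constant $k\log\rho$ to both sides, using $|g(y)|=|f(y)|$ on $\partial\mathbb{B}_{\rho}$. Your observation that the surviving Laplacian term is $\Delta\log|f_{1}(x)|_{|x=0}$ rather than $\Delta\log|f(x)|_{|x=0}$ is a correct (and welcome) clarification of a notational abuse in the paper's displayed identity.
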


\begin{proof}
Suppose that $f(x)=x^{k}f_{1}(x)$, for some $k\in\mathbb{Z}\setminus\{0\}$ and
that $f_{1}$ has nowhere else zeros or poles. Then defining, 
\begin{equation*}
g(x):=\left(\rho x^{-1}\right)^{k}f(x),
\end{equation*}
we have that $g$ satisfies the hypotheses of Theorem \ref{jensen} and that
\begin{equation*}
\log|g(x)|=k\log\rho+\log|f_{1}(x)|.
\end{equation*}
Therefore
\begin{equation*}
\log|g(0)|=\frac{1}{|\partial \mathbb{B}_{\rho}|}\int_{\partial \mathbb{B}_{\rho}}\log |g(y)|d\sigma(y)-\frac{\rho^{2}}{8}\Delta\log |g(x)|_{|x=0},
\end{equation*}
is equivalent to
\begin{equation*}
k\log\rho+\log|f_{1}(0)|=\frac{1}{|\partial \mathbb{B}_{\rho}|}\int_{\partial \mathbb{B}_{\rho}}\log |f(y)|d\sigma(y)-\frac{\rho^{2}}{8}\Delta\log |f(x)|_{|x=0},
\end{equation*}
because, for $y\in\partial \mathbb{B}_{\rho}$, it holds, $|g(y)|=|f(y)|$.
If now $f_{1}$ has zeros or singularities (as in the hypotheses of Theorem \ref{jensen}), then it is sufficient to adapt the former proof changing 
properly the function $g$ in such a way that it gets rid of these zeros and singularities.
The new $g$ will then be as in the proof of Theorem \ref{jensen}, times the new factor  $\left(\rho x^{-1}\right)^{k}$ introduced in the present proof.
\end{proof}

%

We now pass to another group of corollaries that deal with properties of zeros of regular functions.
First of all, let $C$ denotes the following cone:
\begin{equation}\label{cone}
C:=\{\alpha+I\beta\in\mathbb{H}\,|\,\beta\geq|\alpha|,\,\beta\neq0, I\in\mathbb{S}\},
\end{equation}
This cone has an interesting role as the following corollary shows.

\begin{corollary}
Let $f:\Omega\rightarrow\hat{\mathbb{H}}$ be a slice regular function that satisfies the hypotheses of Theorem \ref{jensen}. Then, if all zeros and singularities of $f$ lie in $\partial C$, then the Jensen formula becomes:
\begin{equation*}
\log|f(0)| =  \frac{1}{|\partial \mathbb{B}_{\rho}|}\int_{\partial \mathbb{B}_{\rho}}\log|f(y)|d\sigma(y)-\frac{\rho^{2}}{8}\Delta\log|f(0)| -\sum_{|a_{i}|<\rho}\log\displaystyle\frac{\rho^{2}}{|a_{i}|^{2}}.
\end{equation*}
\end{corollary}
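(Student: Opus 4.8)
The plan is to start from the slice preserving Jensen formula \eqref{jensenreg} and to track which of its four families of boundary terms can actually occur under the hypothesis that every zero and singularity lies in $\partial C$. Since the applicable instance of Theorem \ref{jensen} is case (i), and since $f$ is (slice preserving) regular, it has no poles at all; hence the sums over the real poles $\{p_h\}$ and the spherical poles $\{\mathbb{S}_{b_j}\}$ are empty and those two lines of \eqref{jensenreg} disappear immediately. It then remains to examine the two surviving families, the real zeros $r_k$ and the spherical zeros $\mathbb{S}_{a_i}$, using the defining condition of $\partial C$, namely $\beta=|\alpha|$.

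The first observation is that no real zero can lie in $\partial C$. A point of $\partial C$ has the form $\alpha+I\beta$ with $\beta=|\alpha|\neq 0$, so it is genuinely non-real, whereas a real zero $r_k$ has vanishing imaginary part. Thus the hypothesis forces the family $\{r_k\}$ to be empty, and with it the entire sum over real zeros in \eqref{jensenreg}, including each correction $\tfrac14(r_k^4-\rho^4)/(\rho^2 r_k^2)$. Recall moreover that, by Remark \ref{structpres}, a slice preserving function cannot possess non-real $\mathbb{S}$-isolated zeros, so every zero that does occur is necessarily spherical; this is what guarantees that only the $a_i$-terms are left to analyze.

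The heart of the matter is the evaluation of the quadratic correction attached to each spherical zero. Writing $a_i=\alpha_i+K_i\beta_i$ with $K_i\in\mathbb{S}$ and $\beta_i>0$, the condition $a_i\in\partial C$ reads $\beta_i=|\alpha_i|$, so that $\beta_i^2=\alpha_i^2$. Consequently $a_i+a_i^c=2\alpha_i$ and $|a_i|^2=\alpha_i^2+\beta_i^2=2\alpha_i^2$, whence
\[
2|a_i|^2-(a_i+a_i^c)^2=4\alpha_i^2-4\alpha_i^2=0.
\]
This makes the bracketed factor in the spherical-zero line of \eqref{jensenreg} vanish for every $i$, so the only surviving contribution of each $\mathbb{S}_{a_i}$ is the logarithmic term $-\log(\rho^2/|a_i|^2)$.

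Assembling the three reductions—no poles, no real zeros, and the vanishing of the quadratic correction for every spherical zero—collapses \eqref{jensenreg} exactly to the asserted identity. I expect no genuine obstacle beyond the short algebraic identity on $\partial C$ displayed above; the only point demanding a little care is the bookkeeping that certifies the real-zero family is truly empty (and not merely equipped with vanishing corrections), since it is precisely this that removes the terms $\tfrac14(r_k^4-\rho^4)/(\rho^2 r_k^2)$ from the formula altogether and leaves only the clean sum $-\sum_{|a_i|<\rho}\log(\rho^2/|a_i|^2)$.
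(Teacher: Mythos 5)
Your proposal is correct and takes essentially the same route as the paper: the decisive step in both is the identity $2|a_i|^2-(a_i+a_i^c)^2=0$ when $\beta_i=|\alpha_i|$, which annihilates the quadratic correction attached to each spherical zero and leaves only the logarithmic terms. Your extra bookkeeping (no poles since $f$ is regular, no real zeros on $\partial C$ away from the origin, the origin excluded by $f(0)\neq 0$, and only spherical zeros by Remark \ref{structpres}) is left implicit in the paper's proof but is stated correctly.
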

\begin{proof}
The terms in the Jensen formula of the form
\begin{equation*}
\frac{1}{4}\left(\frac{\rho^{4}-|a_{i}|^{4}}{\rho^{2}|a_{i}|^{4}}\right)[2|a_{i}|^{2}-(a_{i}+ a^{c}_{i})^{2}],
\end{equation*}
can be either greater or less or equal to zero, depending on the factor $[2|a_i|^2-(a_i+ a^{c}_i)^2]$. 
If $a_i=\alpha+I\beta$ with $\beta>0$, then, $[2|a_i|^2-(a_i+ a^{c}_i)^2]=\beta^2-\alpha^2$  and so it is greater then zero if and only
if $\beta>|\alpha|$, is equal to zero if and only if $\beta=|\alpha|$ and less than zero otherwise.
Therefore, if
 $f$ is a slice preserving regular function, such that its zeros are all spherical and belonging to   
the boundary $\partial C$, then its Jensen formula becomes:

\begin{equation*}
\log|f(0)| =  \frac{1}{|\partial \mathbb{B}_{\rho}|}\int_{\partial \mathbb{B}_{\rho}}\log|f(y)|d\sigma(y)-\frac{\rho^{2}}{8}\Delta\log|f(x)|_{|x=0} -\sum_{|a_{i}|<\rho}\log\displaystyle\frac{\rho^{2}}{|a_{i}|^{2}}.
\end{equation*}
\end{proof}

\begin{definition}
Let $f$ be a slice regular function defined on the ball centered in zero with radius $R$ and let 
$r<R$. We set the following notations:
\begin{equation*}
M(r)=M_f(r)=\sup_{|x|=r}|f(x)|,\qquad
N(r)=N_f(r)=\sum_{x\in f^{-1}(0)\cap \mathbb{B}_{r}}m_{f}(x).
\end{equation*}
\end{definition}

The next two corollaries give information, under some technical hypotheses,
on the ampleness of the ball centered in zero where a regular function is not zero.

\begin{corollary}\label{corN}
Let $f$ be a slice preserving regular function in a neighborhood of the closed ball $\overline{\mathbb{B}_{R}}$ such that $f(0)\neq 0$
and such that any zero of $f$ lies in the set $C$ defined in formula \eqref{cone}.
Then, if $r<R$, the following inequality holds:
\begin{equation}\label{ineqzero}
N(r)\leq\frac{\log M(R)-\log|f(0)|-\frac{1}{8}R^2\Delta\log|f(x)|_{|x=0}}{\log R-\log r}
\end{equation}
\end{corollary}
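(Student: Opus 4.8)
The plan is to run the classical complex counting argument through the quaternionic Jensen formula, the one genuinely new ingredient being that the cone hypothesis forces the extra spherical correction terms to carry a favorable sign. First I would record that, since $f$ is slice preserving and every zero of $f$ lies in $C$ (whose points all have $\beta\neq 0$), the function has \emph{no} real zeros: all of its zeros are spherical, say $\mathbb{S}_{a_i}$ with $a_i=\alpha_i+I\beta_i$ and $\beta_i\geq|\alpha_i|>0$, and by Corollary~\ref{zerocptcor} only finitely many meet $\overline{\mathbb{B}_R}$. I would then apply the Jensen formula \eqref{jensenreg} with $\rho=R$, invoking Corollary~\ref{genjens1} to discard any zero sitting on $\partial\mathbb{B}_R$ (there are no poles, as $f$ is regular); the real-zero and pole sums vanish and, moving everything onto one side, one is left with
\[
\sum_{|a_i|<R}\left(\log\frac{R^{2}}{|a_i|^{2}}+\frac14\left(\frac{R^{4}-|a_i|^{4}}{R^{2}|a_i|^{4}}\right)\bigl[2|a_i|^{2}-(a_i+a_i^{c})^{2}\bigr]\right)=\frac{1}{|\partial\mathbb{B}_R|}\int_{\partial\mathbb{B}_R}\log|f(y)|\,d\sigma(y)-\frac{R^{2}}{8}\Delta\log|f(x)|_{|x=0}-\log|f(0)|.
\]

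The decisive elementary computation is that for a spherical zero $a_i+a_i^{c}=2\alpha_i$, so $2|a_i|^{2}-(a_i+a_i^{c})^{2}=2(\beta_i^{2}-\alpha_i^{2})$, which is $\geq 0$ precisely because $a_i\in C$; combined with $|a_i|<R$ this makes \emph{every} correction term nonnegative. Dropping these nonnegative terms, and bounding the boundary mean value by $\log M(R)$ via $|f(y)|\leq M(R)$ on $\partial\mathbb{B}_R$, yields
\[
\sum_{|a_i|<R}\log\frac{R^{2}}{|a_i|^{2}}\leq \log M(R)-\log|f(0)|-\frac{R^{2}}{8}\Delta\log|f(x)|_{|x=0}.
\]

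Finally I would restrict the sum to the zeros inside $\mathbb{B}_r$ and use, for $|a_i|<r$, the monotonicity $\log\frac{R^{2}}{|a_i|^{2}}=2\log\frac{R}{|a_i|}>2(\log R-\log r)$. The point that needs care is the bookkeeping of multiplicities: each sphere $\mathbb{S}_{a_i}$ enters the Jensen sum with its characteristic-polynomial exponent $n_i$, whereas it contributes $m_f=2n_i$ to $N(r)$ (since $f^{s}=f^{2}$ for slice preserving $f$, a factor $((x-a_i)^{s})^{n_i}$ of $f$ becomes $((x-a_i)^{s})^{2n_i}$ in $f^{s}$). Hence $N(r)$ is exactly twice the number of spheres with $|a_i|<r$ counted in the Jensen sum, and this factor $2$ cancels the $2$ produced by $\log\frac{R^{2}}{|a_i|^{2}}$; dividing through by $\log R-\log r$ delivers \eqref{ineqzero}. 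The only genuinely delicate step is this matching of the $N(r)$-multiplicity convention with the counting built into Jensen's formula, together with the sign analysis of the correction term; the rest is the familiar complex-analytic monotonicity estimate, now executed for spherical zeros.
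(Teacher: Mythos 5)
Your argument is correct and is essentially the paper's own proof: both rest on the Jensen formula \eqref{jensenreg}, the observation that $2|a_i|^2-(a_i+a_i^c)^2=2(\beta_i^2-\alpha_i^2)\geq 0$ on the cone $C$ so the spherical correction terms can be dropped, the bound of the boundary mean by $\log M(R)$, and the monotonicity $\log\frac{R^2}{|a_i|^2}\geq 2(\log R-\log r)$ for $|a_i|<r$; the only differences are presentational (you invoke Corollary~\ref{genjens1} and work additively, the paper works multiplicatively on $\mathbb{B}_\delta$ and lets $\delta\to R$). Your explicit reconciliation of the multiplicity conventions --- each sphere entering the Jensen sum with its characteristic-polynomial exponent $n_i$ while contributing $m_f=2n_i$ to $N(r)$, the resulting factor $2$ cancelling the $2$ from $\log\frac{R^2}{|a_i|^2}$ --- is in fact more careful than the paper's, whose displayed proof ends with an extra factor $\frac{1}{2}$ that does not appear in the stated inequality \eqref{ineqzero}, precisely because this bookkeeping is left implicit there.
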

\begin{proof}
First of all, since by hypothesis all the zeros of $f$ belong to $C$, then these are spheres passing through some quaternion $a_{i}$. Therefore, if $\delta\in(r,R)$, and if $|a_{1}|\leq |a_{2}|\leq |a_{3}|\leq\dots|a_{N(\delta)}|< \delta$, then by Jensen formula \eqref{jensenreg} and
since $\int_{\mathbb{B}_{\delta}}\log|f(x)|dx\leq |\mathbb{B}_{\delta}|\sup_{\mathbb{B}_{\delta}}\log|f(x)|$,

\begin{equation*}
\begin{array}{rcl}
M(\delta) & \geq & \exp\left(\displaystyle\frac{1}{|\partial \mathbb{B}_{\delta}|}\displaystyle\int_{\partial \mathbb{B}_{\delta}}\log|f(y)|d\sigma(y)\right)=\\
& &\\
 & = & |f(0)|\exp\left(\displaystyle\frac{\delta^{2}}{8}\Delta\log|f(x)|_{|x=0}\right)\displaystyle\prod_{i=1}^{N(\delta)}\left(\displaystyle\frac{\delta^{2}}{|a_{i}|^{2}}\exp\left(\displaystyle\frac{1}{4}\displaystyle\frac{\delta^{4}-|a_{i}|^{4}}{\delta^{2}|a_{i}|^{2}}[2|a_{i}|^{2}-(a_{i}+ a^{c}_{i})^{2}]\right)\right)\geq\\
& &\\
&\geq & |f(0)|\exp\left(\displaystyle\frac{\delta^{2}}{8}\Delta\log|f(x)|_{|x=0}\right)\displaystyle\prod_{i=1}^{N(\delta)}\displaystyle\frac{\delta^{2}}{|a_{i}|^{2}}\geq |f(0)|\exp\left(\displaystyle\frac{\delta^{2}}{8}\Delta\log|f(x)|_{|x=0}\right)\displaystyle\prod_{i=1}^{N(r)}\displaystyle\frac{\delta^{2}}{|a_{i}|^{2}},
\end{array}
\end{equation*}
where the penultimate inequality holds since, for $a_{i}\in C$, the term
\begin{equation*}
\frac{\delta^{4}-|a_{i}|^{4}}{\delta^{2}|a_{i}|^{2}}[2|a_{i}|^{2}-(a_{i}+ a^{c}_{i})^{2}]\geq 0,
\end{equation*}
and so 
\begin{equation*}
\exp\left(\frac{\delta^{4}-|a_{i}|^{4}}{\delta^{2}|a_{i}|^{2}}[2|a_{i}|^{2}-(a_{i}+ a^{c}_{i})^{2}]\right)\geq 1
\end{equation*}
Hence, letting $\delta$ goes to $R$,
\begin{equation*}
\prod_{i=1}^{N(r)}\frac{R^{2}}{|a_{i}|^{2}}\leq \frac{M(R)}{|f(0)|\exp\left(\frac{R^{2}}{8}\Delta\log|f(x)|_{|x=0}\right)}.
\end{equation*}
Now, since $|a_{i}|\leq r$, we have that,
\begin{equation*}
\prod_{i=1}^{N(r)}\frac{R^{2}}{|a_{i}|^{2}}\geq \prod_{i=1}^{N(r)}\frac{R^{2}}{r^{2}}=\left(\frac{R^{2}}{r^{2}}\right)^{N(r)}.
\end{equation*}
We have obtained that
\begin{equation*}
\left(\frac{R^{2}}{r^{2}}\right)^{N(r)}\leq\frac{M(R)}{|f(0)|\exp\left(\frac{R^{2}}{8}\Delta\log|f(x)|_{|x=0}\right)}.
\end{equation*}
Applying now the logarithm to both sides of the last inequality, we get,
\begin{equation*}
\log\left(\frac{R^{2}}{r^{2}}\right)^{N(r)}\leq \log M(R)-\log|f(0)|-\frac{R^{2}}{8}\Delta\log|f(x)|_{|x=0}.
\end{equation*}
Finally, using standard logarithm properties, we obtain the thesis:
\begin{equation*}
N(r)\leq\frac{1}{2}\left(\frac{\log M(R)-\log|f(0)|-\frac{R^{2}}{8}\Delta\log|f(x)|_{|x=0}}{\log R-\log r}\right).
\end{equation*}

\end{proof}

\begin{remark}
If in Corollary \ref{corN}, we set $R=er$, where $e$ is the Napier number, then,
\begin{equation*}
N(r)\leq\frac{1}{2}\left(\log M(er)-\log|f(0)|-\frac{(er)^{2}}{8}\Delta\log|f(x)|_{|x=0}\right).
\end{equation*}
\end{remark}

\begin{corollary}\label{corNN}
Let $f:\mathbb{B}\rightarrow \mathbb{B}$ be a  regular function (not necessarily slice preserving) in a neighborhood of $\overline{\mathbb{B}}$ such that $f(0)\neq 0$
and such that any zero of $f$ lies in the set $C$ defined in equation \eqref{cone}.
If $(|f^{s}(0)|\exp(\frac{1}{8}\Delta\log|f^{s}(x)|_{|x=0}))\leq1$, then $f$ cannot be zero in $\mathbb{B}_{r}$, where $r$ is such that
\begin{equation*}
r<\sqrt{|f^{s}(0)|}\exp\left(\frac{1}{16}\Delta\log|f^{s}(x)|_{|x=0}\right).
\end{equation*}

\end{corollary}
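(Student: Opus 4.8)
The plan is to reduce the whole statement to the slice preserving regular function $f^{s}=f^{c}* f$ and then invoke Corollary \ref{corN}. First I would record the basic facts about $f^{s}$: it is slice preserving and regular in a neighborhood of $\overline{\mathbb{B}}$, one has $f^{s}(0)=f^{c}(0)f(0)=|f(0)|^{2}\neq 0$, and since the zero set of $f^{s}$ equals $\bigcup_{x\in\mathcal{Z}(f)}\mathbb{S}_{x}$ while the cone $C$ of \eqref{cone} is a circular set, every zero of $f^{s}$ again lies in $C$. Crucially, $f$ vanishes somewhere in the (circular) ball $\mathbb{B}_{r}$ if and only if $f^{s}$ does, because a zero $y\in\mathbb{B}_{r}$ of $f$ forces the entire sphere $\mathbb{S}_{y}\subset\mathbb{B}_{r}$ to be a zero sphere of $f^{s}$. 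Hence it suffices to prove $N_{f^{s}}(r)=0$.

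The key estimate is $M_{f^{s}}(1)\le 1$. Writing $f=\mathcal{I}(F_{1}+\imath F_{2})$, a direct computation with stem functions yields the identity
\begin{equation*}
|f^{s}(x)|^{2}=\Big(\max_{y\in\mathbb{S}_{x}}|f(y)|^{2}\Big)\Big(\min_{y\in\mathbb{S}_{x}}|f(y)|^{2}\Big),
\end{equation*}
so that $|f^{s}(x)|\le\big(\sup_{y\in\mathbb{S}_{x}}|f(y)|\big)^{2}$. Since $f(\mathbb{B})\subset\mathbb{B}$ and $f$ is regular, hence continuous, up to $\partial\mathbb{B}$, we get $|f(y)|\le 1$ for all $|y|=1$; as $|f^{s}|$ is constant on each sphere, this gives $|f^{s}(x)|\le 1$ for all $|x|=1$, i.e. $\log M_{f^{s}}(1)\le 0$.

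I would then apply Corollary \ref{corN} to $f^{s}$ with $R=1$ (the admissibility $r<1$ is checked below). Using $\log 1=0$ and discarding the nonpositive term $\log M_{f^{s}}(1)$, the inequality \eqref{ineqzero} in the sharper form carrying the factor $\tfrac12$ that is obtained at the end of the proof of Corollary \ref{corN} gives
\begin{equation*}
N_{f^{s}}(r)\le\frac{1}{2}\cdot\frac{-\log|f^{s}(0)|-\frac{1}{8}\Delta\log|f^{s}(x)|_{|x=0}}{-\log r}.
\end{equation*}
Setting $A:=\log|f^{s}(0)|+\frac{1}{8}\Delta\log|f^{s}(x)|_{|x=0}$, the standing hypothesis $|f^{s}(0)|\exp\big(\frac18\Delta\log|f^{s}(x)|_{|x=0}\big)\le 1$ reads exactly $A\le 0$, so $-A\ge 0$ and $e^{A/2}\le 1$; in particular the prescribed radius $r<\sqrt{|f^{s}(0)|}\exp\big(\frac1{16}\Delta\log|f^{s}(x)|_{|x=0}\big)=e^{A/2}\le 1$ is legitimately less than $R=1$. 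This bound on $r$ yields $\log(1/r)>-A/2$, whence the right-hand side above is strictly less than $1$. Since $N_{f^{s}}(r)$ is a nonnegative integer, it must be $0$, so neither $f^{s}$ nor $f$ has a zero in $\mathbb{B}_{r}$.

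The only genuinely delicate step is the estimate $M_{f^{s}}(1)\le 1$: one must convert the self-map hypothesis $f(\mathbb{B})\subset\mathbb{B}$, which is a pointwise control of $|f|$ on every sphere, into a bound on the modulus of the symmetrization, and this rests on the fact that $|f^{s}|$ on a sphere is governed by the extremal values of $|f|$ on that same sphere. Everything after that is a substitution into Corollary \ref{corN} together with elementary logarithm manipulations and the integrality of $N_{f^{s}}(r)$.
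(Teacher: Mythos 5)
Your proof is correct and follows essentially the same route as the paper: pass to the symmetrization $f^{s}$, apply Corollary \ref{corN} with $R=1$ and $\log M_{f^{s}}(1)\le 0$, and deduce $N_{f^{s}}(r)<1$ (hence $=0$) from the stated bound on $r$ via the same logarithm manipulation. You are in fact more explicit than the paper on the one delicate point, namely that the self-map hypothesis $f(\mathbb{B})\subset\mathbb{B}$ yields $M_{f^{s}}(1)\le 1$ through the identity $|f^{s}(x)|^{2}=\bigl(\max_{\mathbb{S}_{x}}|f|^{2}\bigr)\bigl(\min_{\mathbb{S}_{x}}|f|^{2}\bigr)$, a step the paper leaves implicit in the line ``$\log M(R)\le\log 1$''.
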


\begin{proof}
First of all, observe that if $q_{0}$ is in $\mathcal{Z}(f)$ and lies in $C$, then
$\mathbb{S}_{q_{0}}$ is a sphere of zeros for $f^{s}$ which lies in $C$ as well.
Now, starting from equation \eqref{ineqzero}, letting $R$ goes to $1$ and remembering that $\log M(R)\leq \log 1=0$, we get,
%
\begin{equation*}
N(r)\leq \frac{1}{2}\left(\frac{\log|f^{s}(0)|+\frac{1}{8}\Delta\log|f^{s}(x)|_{|x=0}}{\log r}\right).
\end{equation*}
Imposing that the right hand side of the last inequality is strictly less than one and since $r<1$, we obtain,
\begin{equation*}
\log r<\frac{1}{2}\left(\log|f^{s}(0)|+\frac{1}{8}\Delta\log|f^{s}(x)|_{|x=0}\right),
\end{equation*}
therefore, if $(|f^{s}(0)|\exp(\frac{1}{8}\Delta\log|f^{s}(x)|_{|x=0}))\leq1$, and passing to the exponential, we obtain the thesis:
\begin{equation*}
r<\sqrt{|f^{s}(0)|}\exp\left(\frac{1}{16}\Delta\log|f^{s}(x)|_{|x=0}\right).
\end{equation*}
\end{proof}

In the previous corollary, if $(|f^{s}(0)|\exp(\frac{1}{8}\Delta\log|f^{s}(x)|_{|x=0}))> 1$,
then any $r$ satisfies the thesis.

In the next two corollaries we show how Jensen formulas can be used to compute some integrals over 3-spheres.

\begin{corollary}\label{measurePQL}
If $f$ is a PQL function
\begin{equation*}
f(x):=a_{0}\prod_{k=1}^{N}(x-q_{k})^{M_{k}}a_{k},
\end{equation*}
with $M_{k}=\pm 1,$ $|q_{k}|<\rho$ and $a_k \in \mathbb{H} \setminus \{ 0 \}$ for any $k$, then,
the following formula holds

\begin{equation*}
 \frac{1}{|\partial \mathbb{B}_{\rho}|}\int_{\partial \mathbb{B}_{\rho}}\log|f(y)|d\sigma(y)  = \frac{\rho^{2}}{8}\left(\sum_{k=1}^{N}M_{k}\frac{2}{|q_{k}|^{2}}\right)+\sum_{h=0}^{N}\log|a_{h}| +
\sum_{k=1}^{N}M_{k}\left(\log\rho+\frac{1}{4}\frac{|q_{k}|^{4}-\rho^{4}}{\rho^{2}|q_{k}|^{2}}\right)
\end{equation*}

\end{corollary}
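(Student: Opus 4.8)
The plan is to obtain this formula as an immediate corollary of the PQL Jensen formula \eqref{jensenpql}, by isolating the boundary integral and explicitly evaluating the two surviving ingredients on its right-hand side, namely $\log|f(0)|$ and $\Delta\log|f(x)|_{|x=0}$. Solving \eqref{jensenpql} for the mean value of $\log|f|$ on $\partial\mathbb{B}_\rho$ gives
\begin{equation*}
\frac{1}{|\partial\mathbb{B}_\rho|}\int_{\partial\mathbb{B}_\rho}\log|f(y)|\,d\sigma(y)=\log|f(0)|+\frac{\rho^2}{8}\Delta\log|f(x)|_{|x=0}+\sum_{k=1}^N M_k\left(\log\frac{\rho}{|q_k|}+\frac14\frac{|q_k|^4-\rho^4}{\rho^2|q_k|^2}\right),
\end{equation*}
so the whole statement reduces to computing the first two terms on the right.

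First I would compute $\log|f(0)|$. Since the quaternionic modulus is multiplicative, i.e. $|pq|=|p||q|$ for all $p,q\in\mathbb{H}$, the modulus of the PQL product factorizes as $|f(x)|=|a_0|\prod_{k=1}^N|x-q_k|^{M_k}|a_k|$; evaluating at $x=0$ and taking real logarithms yields $\log|f(0)|=\sum_{h=0}^N\log|a_h|+\sum_{k=1}^N M_k\log|q_k|$. Next I would compute the Laplacian term using the same factorization, which gives the identity of real-valued functions $\log|f(x)|=\log|a_0|+\sum_{k=1}^N\bigl(M_k\log|x-q_k|+\log|a_k|\bigr)$ away from the $q_k$. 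The constants $\log|a_h|$ are annihilated by $\Delta$, while Corollary \ref{laplog} together with the translation invariance recorded in Remark \ref{generalremarks} gives $\Delta\log|x-q_k|=2/|x-q_k|^2$. Evaluating at the origin yields $\Delta\log|f(x)|_{|x=0}=\sum_{k=1}^N M_k\,2/|q_k|^2$.

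Finally I would substitute these two expressions into the isolated Jensen identity and collect terms. Writing $\log(\rho/|q_k|)=\log\rho-\log|q_k|$, the contribution $\sum_k M_k\log|q_k|$ coming from $\log|f(0)|$ cancels exactly against the $-\sum_k M_k\log|q_k|$ produced by the $\log(\rho/|q_k|)$ terms, and what survives is precisely the right-hand side of the statement. There is no genuine obstacle here: the argument is pure bookkeeping built on the already-proved Jensen formula. The only point requiring a word of care is that the decomposition of $\log|f|$ into a sum of single-factor logarithms is legitimate, which is exactly where the multiplicativity $|pq|=|p||q|$ of the modulus and the commutativity of the real logarithm are invoked.
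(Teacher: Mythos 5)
Your proposal is correct and follows essentially the same route as the paper: both proofs simply rearrange the PQL Jensen formula \eqref{jensenpql} and substitute the explicit values $\log|f(0)|=\sum_{h=0}^{N}\log|a_{h}|+\sum_{k=1}^{N}M_{k}\log|q_{k}|$ and $\Delta\log|f(x)|_{|x=0}=\sum_{k=1}^{N}M_{k}\,2/|q_{k}|^{2}$, the latter via $\Delta\log|x-q|=2/|x-q|^{2}$. The bookkeeping, including the cancellation of the $\log|q_{k}|$ terms, checks out.
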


\begin{proof}
The thesis follows directly from Jensen formula for PQL functions. In fact, it is only  necessary to compute the two quantities $\log|f(0)|$ and $\Delta\log|f(x)|_{|x=0}$ for the given PQL function, but in our case, since
\begin{equation*}
f(x):=a_{0}\prod_{k=1}^{N}(x-q_{k})^{M_{k}}a_{k},
\end{equation*}
then
\begin{equation*}
\log|f(0)|=\sum_{h=0}^{N}\log|a_{h}|+\sum_{k=1}^{N}M_{k}\log|q_{k}|,
\end{equation*}
and thanks to the fact that, for any $x\neq 0$, $\Delta\log|x|=2/|x|^{2}$, then
\begin{equation*}
\Delta\log|f(x)|_{|x=0}=\sum_{k=1}^{N}M_{k}\frac{2}{|q_{k}|^{2}}.
\end{equation*}

\end{proof}

In the last two corollaries we deal with consequences on $\rho$-Blaschke functions coming from Jensen formulas.

\begin{corollary}\label{measure}
Given any $a\in\mathbb{H}\setminus \{0\}$, and any $\rho>0$, if $r>\max\{|a|,\rho^{2}|a|^{-1}\}$, we get,
\begin{equation*}
\frac{1}{|\partial \mathbb{B}_{r}|}\int_{\partial \mathbb{B}_{r}}\log|B_{a,\rho}^p(y)|d\sigma(y)=\log	\frac{|a|}{\rho}+\frac{1}{4}\left(\frac{\rho^{4}-|a|^{4}}{|a|^{2}r^{2}}\right),
\end{equation*}
\begin{equation*}
\frac{1}{|\partial \mathbb{B}_{r}|}\int_{\partial \mathbb{B}_{r}}\log|B_{\mathbb{S}_{a},\rho}(y)|d\sigma(y)=2\log	\frac{|a|}{\rho}-\frac{1}{4}\left(\frac{\rho^{4}-|a|^{4}}{|a|^{4}r^{2}}\right)(2|a|^{2}-(a+ a^{c})^{2}).
\end{equation*}
\end{corollary}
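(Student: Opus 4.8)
The plan is to apply the Jensen formula of Theorem \ref{jensen}, but on the larger ball $\mathbb{B}_r$ rather than on $\mathbb{B}_\rho$, exploiting the fact that the Blaschke data are built from the parameter $\rho$ while the integration radius $r$ is an independent quantity. The hypothesis $r>\max\{|a|,\rho^{2}|a|^{-1}\}$ is exactly what guarantees that both the zero and the pole of each factor lie strictly inside $\mathbb{B}_r$: by Remark \ref{zerobla} their moduli are $\rho^{2}/|a|$ and $|a|$, so neither $0$ nor any point of $\partial\mathbb{B}_r$ is a zero or a pole, and the hypotheses of Theorem \ref{jensen} are met with $\rho$ replaced by $r$.

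First I would treat $B_{a,\rho}^{p}$. By Remark \ref{zerobla} it is a PQL function with a single zero at $\rho^{2}(a^{c})^{-1}$, of modulus $\rho^{2}/|a|$, and a single pole at $a$. Applying case (ii) of Theorem \ref{jensen} on $\mathbb{B}_r$ (so with $N=2$, $M_1=+1$ at $q_1=\rho^{2}(a^{c})^{-1}$ and $M_2=-1$ at $q_2=a$) and solving for the boundary average gives
\[
\frac{1}{|\partial\mathbb{B}_r|}\int_{\partial\mathbb{B}_r}\log|B_{a,\rho}^{p}(y)|\,d\sigma(y)=\log|B_{a,\rho}^{p}(0)|+\frac{r^{2}}{8}\Delta\log|B_{a,\rho}^{p}(x)|_{|x=0}+\sum_{k=1}^{2}M_k\left(\log\frac{r}{|q_k|}+\frac14\frac{|q_k|^{4}-r^{4}}{r^{2}|q_k|^{2}}\right).
\]
Here $|B_{a,\rho}^{p}(0)|=\rho/|a|$ is computed directly, and $\Delta\log|B_{a,\rho}^{p}(x)|_{|x=0}$ is supplied by Lemma \ref{laplmoebius}(i). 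Substituting and simplifying, the logarithmic parts collapse to $\log(|a|/\rho)$, while among the rational terms the monomials in $r^{2}$ cancel pairwise, leaving only $\tfrac14\tfrac{\rho^{4}-|a|^{4}}{|a|^{2}r^{2}}$; this is the first claimed identity.

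For $B_{\mathbb{S}_a,\rho}$ I would argue identically, now through case (i) of Theorem \ref{jensen} with a single spherical zero $a_1=\rho^{2}a^{-1}$ and a single spherical pole $b_1=a$. The inputs $|B_{\mathbb{S}_a,\rho}(0)|=\rho^{2}/|a|^{2}$ and $\Delta\log|B_{\mathbb{S}_a,\rho}(x)|_{|x=0}$ from Lemma \ref{laplmoebius}(ii) are plugged into the slice-preserving Jensen formula on $\mathbb{B}_r$. The key algebraic simplification, and the step I expect to be the main obstacle, is handling the factor attached to the spherical zero: using $a^{-1}+(a^{-1})^{c}=(a+a^{c})/|a|^{2}$ one rewrites
\[
2|\rho^{2}a^{-1}|^{2}-\big(\rho^{2}a^{-1}+(\rho^{2}a^{-1})^{c}\big)^{2}=\frac{\rho^{4}}{|a|^{4}}\big(2|a|^{2}-(a+a^{c})^{2}\big),
\]
so that every surviving rational term carries the common factor $T:=2|a|^{2}-(a+a^{c})^{2}$. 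Collecting the three contributions (from the laplacian term, the pole, and the zero) over the common denominator $r^{2}\rho^{4}$, the numerator telescopes to $\rho^{4}(|a|^{4}-\rho^{4})$, whence the bracket reduces to $(|a|^{4}-\rho^{4})/r^{2}$ and the rational part equals $-\tfrac14\tfrac{\rho^{4}-|a|^{4}}{|a|^{4}r^{2}}\,T$; together with the logarithmic part $2\log(|a|/\rho)$ this yields the second identity.
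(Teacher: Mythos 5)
Your proposal is correct and follows exactly the paper's own route: apply the Jensen formulas of Theorem \ref{jensen} on the ball $\mathbb{B}_{r}$, feed in the zero/pole locations from Remark \ref{zerobla}, the values $|B^{p}_{a,\rho}(0)|=\rho/|a|$ and $|B_{\mathbb{S}_a,\rho}(0)|=\rho^{2}/|a|^{2}$, and the laplacians from Lemma \ref{laplmoebius}, then simplify. You even isolate the same key identity $\rho^{2}a^{-1}+\rho^{2}(a^{c})^{-1}=\rho^{2}(a+a^{c})/|a|^{2}$ that the paper singles out for the spherical case, and your observation that $r>\max\{|a|,\rho^{2}|a|^{-1}\}$ is precisely what places all zeros and poles inside $\mathbb{B}_{r}$ is a correct (and slightly more explicit) justification of the hypotheses.
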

\begin{proof}
The proof uses only the two formulas contained in Theorem \ref{jensen}.
For the usual $\rho$-Blaschke function $B_{a,\rho}^p$,
since $B_{a,\rho}^p(\rho^{2} (a^{c})^{-1})=0$ and $B_{a,\rho}^p(a)=\infty$, the Jensen formula \eqref{jensenpql} states:
\begin{eqnarray*}
\log\frac{\rho}{|a|}&=&\frac{1}{|\partial \mathbb{B}_{r}|}\int_{\partial \mathbb{B}_{r}}\log|B_{a,\rho}^p(y)|d\sigma(y)-\frac{r^{2}}{8}\frac{2}{\rho^{4}|a|^{2}}(|a|^{4}-\rho^{4})+\\
& & +\left(\log\frac{r}{|a|}+\frac{1}{4}\frac{|a|^{4}-r^{4}}{r^{2}|a|^{2}}\right)-\left(\log\frac{r|a|}{\rho}+\frac{1}{4}\frac{\rho^{8}|a|^{-4}-r^{4}}{r^{2}\rho^{4}|a|^{-2}}\right).
\end{eqnarray*}
Starting from this equality, after straightforward computations, we obtain the
thesis.

For the second function, that is the symmetrized of the regular $\rho$-Blaschke function, we have that $B_{\mathbb{S}_{a},\rho}(\mathbb{S}_{\rho^{2} a^{-1}})=0$ and $B_{\mathbb{S}_{a},\rho}(\mathbb{S}_{a})=\infty$ and again, by Jensen formula \eqref{jensenreg},
\begin{eqnarray*}
2\log\frac{\rho}{|a|}&=&\frac{1}{|\partial \mathbb{B}_{r}|}\int_{\partial \mathbb{B}_{r}}\log|B_{\mathbb{S}_{a},\rho}(y)|d\sigma(y)-\frac{r^{2}}{8}\frac{2}{\rho^{4}|a|^{2}}(\rho^{4}-|a|^{4})(2|a|^{2}-(a+ a^{c})^{2})+\\
& & +\left(\log\frac{r^{2}}{|a|^{2}}+\frac{1}{4}\frac{r^{4}-|a|^{4}}{r^{2}|a|^{4}}(2|a|^{2}-(a+ a^{c})^{2})\right)+\\
& & -\left(\log\frac{r^{2}|a|^{2}}{\rho^{4}}+\frac{1}{4}\frac{r^{4}-\rho^{8}|a|^{-4}}{r^{2}\rho^{8}|a|^{-4}}(2\rho^{4}|a|^{-2}-(\rho^{2}a^{-1}+\rho^{2} (a^{c})^{-1})^{2})\right).
\end{eqnarray*}
In this case, the proof goes on, again, by straightforward computations, having in mind that:
\begin{equation*}
(\rho^{2}a^{-1}+\rho^{2} (a^{c})^{-1})=\rho^{2}\frac{a+ a^{c}}{|a|^{2}}.
\end{equation*}
\end{proof}

\begin{remark}
Notice that Corollary \ref{measure}, also implies that,
\begin{equation*}
\lim_{r\to +\infty}\frac{1}{|\partial \mathbb{B}_{r}|}\int_{\partial \mathbb{B}_{r}}\log|B_{a,\rho}^p(y)|d\sigma(y)=\log\frac{|a|}{\rho},
\end{equation*}
and, of course,
\begin{equation*}
\lim_{r\to +\infty}\frac{1}{|\partial \mathbb{B}_{r}|}\int_{\partial \mathbb{B}_{r}}\log|B_{\mathbb{S}_{a},\rho}(y)|d\sigma(y)=2\log\frac{|a|}{\rho},
\end{equation*}
because the two functions
\begin{equation*}
r\mapsto \frac{\rho^{4}-|a|^{4}}{|a|^{2}r^{2}}, \quad r\mapsto \frac{\rho^{4}-|a|^{4}}{|a|^{4}r^{2}}(2|a|^{2}-(a+ a^{c})^{2})
\end{equation*}
are continuous and tend to zero for $r$ that goes to infinity.
\end{remark}

\section*{Acknowledgements}
The authors were partially supported by GNSAGA of INdAM and by FIRB 2012 \textit{Geometria Differenziale e Teoria Geometrica delle Funzioni}. A. Altavilla was also partially supported by the SIR grant 
\textit{NEWHOLITE - New methods in holomorphic iteration} n. RBSI14CFME. 
C. Bisi was also partially supported by PRIN \textit{Variet\'a reali e complesse:
geometria, topologia e analisi armonica}. An important part of the present work was
made while the first author was a postdoc (assegnista di ricerca) at 
Dipartimento di Ingegneria Industriale e Scienze Matematiche of
Universit\`a Politecnica delle Marche.\\
The two authors warmly thanks the two referees for their accurate reading of the present paper and for their really interesting remarks that have improved the paper a lot. \\
Many thanks also to Prof. Irene Sabadini who has asked us the question about Niven polynomials which we have answered in \ref{NivenP}.

\vskip0.5cm
{\bf Keywords} : \keywords{Slice regular functions, bi-harmonic functions, Jensen's formula, Riesz measure.} \\
\vskip0.2cm
{\bf MSC 2010 Primary} : \subjclass{30G35} \\
\vskip0.05cm
{\bf MSC 2010 Secondary} : \subjclass{32A30, 31A30}

%

%
%

\end{document}